\numberwithin{equation}{section}
\newtheorem{theorem}{Theorem}[section]
\newtheorem{lemma}[theorem]{Lemma}
\newtheorem{proposition}[theorem]{Proposition}
\theoremstyle{definition}
\newtheorem{definition}[theorem]{Definition}
\newtheorem{remark}[theorem]{Remark}
\newcommand{\cA}{\mathcal{A}}
\newcommand{\cM}{\mathcal{M}}
\newcommand{\cP}{\mathcal{P}}
\newcommand{\cX}{\mathcal{X}}
\newcommand{\cY}{\mathcal{Y}}
\newcommand{\bN}{\mathbb{N}}
\newcommand{\bR}{\mathbb{R}}
\newcommand{\bT}{\mathbb{T}}
\newcommand{\bZ}{\mathbb{Z}}
\newcommand{\ip}[2]{\langle #1,#2\rangle}
\newcommand{\dd}{ \mathrm{d}}
\newcommand{\bONE}{\mathbbm{1}}
\newcommand{\vn}[1]{\left| \! \left| #1\right| \! \right|} 
\DeclareMathOperator*{\argmin}{arg\,min}
\DeclareMathOperator*{\esssup}{ess\,sup}
\DeclareMathOperator{\arctanh}{arctanh}
\def\N{{\mathbb N}}
\def\D{\Delta}
\def\b{\beta}
\def\d{\delta}
\def\phi{\varphi}
\def\l{\lambda}
\def\r{\rho}
\def\s{\sigma}
\def\D{\Delta}
\def\L{\Lambda}
\def\T{\T}
\newcommand{\footnoteremember}[2]{
	\footnote{#2}
	\newcounter{#1}
	\setcounter{#1}{\value{footnote}}
}
\newcommand{\footnoterecall}[1]{
	\footnotemark[\value{#1}]
}
\begin{document}
\title{Gibbs-non-Gibbs transition in the fuzzy Potts models with a Kac-type interaction: Closing the Ising gap}

\author{
Florian Henning,\footnoteremember{RUB}{Ruhr-Universit\"at   Bochum, Fakult\"at f\"ur Mathematik, D44801 Bochum, Germany} \footnote{Florian.Henning@rub.de}
\and
Richard C. Kraaij,\footnoterecall{RUB} \footnote{Richard.Kraaij@rub.de}
\and Christof K\"ulske \footnoterecall{RUB} \footnote{Christof.Kuelske@ruhr-uni-bochum.de, 
\newline
\texttt{http://www.ruhr-uni-bochum.de/ffm/Lehrsttuehle/Kuelske/kuelske.html
/$\sim$kuelske/ }}\, 
\,  
}

\maketitle

\begin{abstract}
	We complete the investigation of the Gibbs properties of the fuzzy Potts model on the $d$-dimensional torus with Kac interaction which was started by Jahnel and one of the authors in \cite{JaKu17}. As our main result of the present paper, we extend the previous sharpness result of mean-field bounds to cover all possible cases of fuzzy transformations, allowing also for the occurrence of Ising classes. The closing of this previously left open Ising-gap involves an analytical argument showing uniqueness of minimizing profiles for certain 
	non-homogeneous conditional variational problems. 
\end{abstract}

\smallskip
\noindent {\bf AMS 2000 subject classification:} 82B20, 82B26.

 \smallskip
\noindent {\bf Keywords:}  Potts model, Kac model, fuzzy Kac-Potts model, Gibbs versus non-Gibbs, large deviation principles, diluted large deviation principles.  


\vfill\eject

\section{Introduction}

The Gibbs property is an important regularity property of a large, or infinite system which comes in various versions, according to the setting considered. It can be formulated for lattice models in the infinite volume \cite{EFS93,Ge11}, for systems of point particles in euclidean space \cite{Ru99} for mean-field systems\cite{KN07}, or for Kac-systems \cite{FHM14}. 

Often local transformations applied to realizations of a given large system which behaves according to a nice Gibbs distribution are of interest. Such transformations include discretization transformations, and various sampling procedures, which reduce information of the initial data,  cf. \cite{EFS93,HK04,JaKu17}. Stochastic time-evolutions, motivated from physics, provide another very interesting type of such transformations, cf. \cite{KN07,EK10,FHM13,FHM14,HRZ15,JaKu16,JaKu17a}.


A relevant and natural question in this context is whether it is possible to describe the image system again as a system of a nice Gibbsian form, with effective, new interactions. This may be true (or not true) for reasons having to do with the absence (or the existence, and visibility) of internal phase transitions. These internal phase transitions may act as a switch and cause an infinite range dependence on small variations of a conditioning. If we want to perform a concrete analysis to decide if such a mechanism shows up for a given system, given parameter values, and given transformation, both mean-field systems and Kac-systems are hopeful as they have a large parameter, which allows for an asymptotic description in terms of a variational principle.  

Indeed, after some work, one sees that also Gibbs-non-Gibbs(GnG) transitions in the Kac-limit, are tied to {\em conditional} variational problems for profiles, via large deviations.  There is also a natural link from Kac-models to mean-field models on the level of these variational principles, which one obtains by considering only profiles which do not have any spatial dependence. 


In the present paper, we round up the previous investigation of the GnG transitions in the Kac fuzzy Potts model of \cite{JaKu17}, and fully decide on an open case which was left out, as we will explain now. The Kac fuzzy Potts model is defined as the image of the Kac-version of Potts model, under the deterministic local transformation which distinguishes the possible spin values of the Potts local spin space $\{1, 2, . . . , q\}$ only according to a fixed local partitioning into $r$ subclasses. The Kac-Potts model itself is formulated on a d-dimensional torus, a setup similar to that of \cite{FHM14} in their study of the dynamical Kac-Ising model. We are now able to completely answer the question:  

{\em Are the non-Gibbs parameter regions equal in Kac-setup and mean-field setup?} 

Or, putting this question in a slightly more refined way: Does the possibility for spatial inhomogeneity in the Kac-model create new and "worse" bad configurations which are seen in parameter regions beyond the bad mean-field regions? It turns out that worse bad configurations are indeed not possible,  but for a non-trivial reason involving proper treatment of a non-homogeneous conditional variational problem, arising for the Ising-classes. 
This progress made in the present paper relies essentially on the new analytical uniqueness of minimizers-result of Proposition \ref{proposition:minimizers_I_and_convergence}, which leads to the proof of the main result Theorem \ref{theorem:fuzz_kac_potts_gng}.

The paper is organized as follows. We keep the presentation self-contained, at the same time streamlining and shortening some parts of the arguments. In Section \ref{section:model_main_results} we introduce our model and state the main results. In particular, in Sections \ref{section:Kac_Potts_model} to \ref{section:seq_gibbs_for_FKP} we introduce the Potts-Kac, fuzzy Potts-Kac model, the notion of sequential Gibbsianness and our main result, Theorem \ref{theorem:fuzz_kac_potts_gng} on Gibbs-non-Gibbs transitions. In Section \ref{section:representation_kernels_and_LDP}, we recall a representation of the conditional kernels that play an important role in the study of GnG transitions and recall the diluted large deviation principle from \cite{JaKu17}. In Section \ref{section:Intro_minimizers_and_limiting_kernels}, we give the main technical result, Proposition \ref{proposition:minimizers_I_and_convergence}, and conclude with a representation of the limiting conditional probability kernels in the Gibbs case, given in Theorem 
\ref{theorem:form_of_kernels_in_Gibbs_case}. The result shows that Gibbsianness of a transformed system
may very well hold in the presence of internal phase transitions, reflected here as the functional dependence of limiting kernels on 
non-trivial minimizing profiles which nevertheless themselves behave continuously as a function of conditioning profiles. 

The proofs follow in Section \ref{section:proofs} and we conclude with two appendices on continuity of limiting functionals and convergence of random variables in the setting where an LDP holds with a finite number of minimizers.

\smallskip

\textbf{Acknowledgement} RK is supported by the Deutsche Forschungsgemeinschaft (DFG) via RTG 2131 High-dimensional Phenomena in Probability--Fluc\-tuations and Discontinuity.

\section{Model and main results} \label{section:model_main_results}
\subsection{The Kac-Potts model} \label{section:Kac_Potts_model}

Let $\mathbb{T}^d :=\bR^d/\bZ^d$ be the $d$-dimensional unit torus. For $n\in \N$, let $\mathbb{T}^d_n$ be the $(1/n)$-discretization of $\mathbb{T}^d$ defined by $\mathbb{T}^d_n:=\Delta^d_n/n$, with $\Delta^d_n:=\bZ^d /n\bZ^d$ the discrete torus of size $n$. For $n\in\N$, let $\Omega_{q,n}:=\{1,\dots,q\}^{\Delta^d_n}$ be the set of \textit{Potts-spin configurations} on $\Delta^d_n$. We will call elements of $\{1,\dots,q\}$ \textit{colours}. The energy of the configuration $\sigma := (\sigma(x))_{x\in\Delta^d_n}\in\Omega_{q,n}$ is given by the \textit{Kac-type Hamiltonian} 
\begin{equation}
H_n(\s):=-\frac{1}{n^d}\sum_{x,y\in\D^d_n}J\left(\frac{x-y}{n}\right)1_{\s(x)=\s(y)}, \hspace{1cm}\sigma \in \Omega_{q,n}
\end{equation}
where $0 < J\in C(\mathbb{T}^d)$ is a continuous \textit{interaction-functions} on $\mathbb{T}^d$ which is symmetric and $\int \dd v J(v)=1$. The Gibbs measure associated with $H_n$ is given by
\begin{equation}\label{KPM}
\mu_n(\s):=\frac{1}{Z_n}\exp(-\b H_n(\s)),\hspace{1cm}\s\in\Omega_{q,n}
\end{equation}
with $\beta \in[0,\infty)$ the inverse temperature and $Z_n$ the normalizing partition sum.

\bigskip

To study the limiting behaviour of the measures $\mu_n$ for large $n$, we first embed our spin configurations into a common space of measures. For any Polish space $E$, let $C_b(E)$ be the space of bounded continuous functions on $E$ and let $\cM_+(E)$ and $\cP(E)$ be the space of non-negative, respectively probability, Borel measures on $E$. We equip both spaces with the weak topology, i.e. the metric topology induced by testing against functions in $C_b(E)$. Recall that if $E$ is compact, then also $\cP(E)$ is weakly compact.

For $\Lambda \subseteq \Delta^d_n$ let $\pi_{\Lambda}: \Omega_{q,n} \mapsto \cP(\mathbb{T}^d_n\times\{1,\dots,q\})\subseteq \cP(\mathbb{T}^d\times\{1,\dots,q\})$ be the \textit{empirical colour measure vector} or \textit{colour profiles }of $\sigma$ inside the volume $\Lambda$ defined by
\begin{equation} \label{def:pi_Lambda}
\pi_\Lambda(\sigma):=\frac{1}{|\L|}\left(\sum_{x\in\L}1_{\s(x)=1}\d_{x/n},\dots,\sum_{x\in\L}1_{\s(x)=q}\d_{x/n}\right)
\end{equation} 
where $\delta_u$ is the Dirac measure at $u\in\mathbb{T}^d$.

For any $\nu\in \cP(\mathbb{T}^d\times\{1,\dots,q\})$ we will write $\nu[a]$ to indicate the evaluation of $\nu$ at a colour $a\in\{1,\dots,q\}$. In particular, for $\Lambda \subseteq \Delta_n^d$ and $\sigma \in \Omega_{q,n}$:
\begin{equation*}
\pi_\Lambda[a](\sigma):=\frac{1}{|\Lambda|}\sum_{x\in\Lambda}1_{\sigma(x)=a}\d_{x/n} \in \cM_+(\bT^d).
\end{equation*}
For simplicity, we will write $\pi_n$ and $\pi_n[a]$ to denote $\pi_{\Delta_n^d}$ and $\pi_{\Delta_n^d}[a]$. Using colour profiles, we can rewrite the Hamiltonian as
\begin{equation}
H_n(\sigma)=-n^d \sum_{a=1}^qF(\pi_{n}[a](\sigma))
\end{equation}
with $F(\nu[a]):=\langle J\ast \nu[a],\nu[a]\rangle=\int\int \nu[a](du)\nu[a](dv)J(u-v)$ and where the convolution of a function and a measure is defined as $(f \ast \mu)(x) = \int f(x-y) \mu(\dd y)$.

\smallskip

We will be interested in weak limits of color profiles in $\cP(\bT^d \times \{1,\dots,q\})$, especially those  having $q$-dimensional Lebesgue densities of the form $\nu=\alpha\lambda=(\alpha[1]\lambda,\dots,\alpha[q]\lambda)$ with $\alpha\in B$ where 
\begin{multline*}
B :=\left\{\alpha=(\alpha[1],\dots,\alpha[q])^T \middle| \text{ } 0\leq\alpha[a]\in L^\infty(\bT^d,\lambda) \phantom{\sum^{i}_j} \right. \\
\text{ with }
\left. \sum_{a=1}^q\alpha[a](x)=1\text{ for }\lambda\text{-a.e. }x\in\mathbb{T}^d \right\}.
\end{multline*}

In the sequel, we will write 
\begin{equation*}
\cP_q(B) := \left\{ \nu \in \cP(\bT^d \times \{1,\dots,q\}) \, \middle| \, \exists \alpha \in B: \, \nu = \alpha \lambda  \right\}.
\end{equation*}

We have the following straightforward result, proven for completeness in Appendix \ref{appendix:absolute_continuity}, which indicates that $\cP_q(B)$ is the relevant space of limiting profiles.
 
\begin{lemma} \label{lemma:possible_limits_of_profiles}
If $\sigma_n \in \Omega_{q,n}$ is a sequence such that $\pi_n(\sigma_n)$ converges weakly to $\nu$ in $\cP(\bT^d \times\{1,\dots,q\})$, then $\nu \in \cP_q(B)$.
\end{lemma}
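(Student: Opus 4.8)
The plan is to show that the $\bT^d$-marginal of $\nu$ is forced to be the normalized Lebesgue measure $\lambda$ and that each colour component $\nu[b]$ is dominated by $\lambda$; disintegrating $\nu$ against $\lambda$ then produces densities lying in $B$. To set up, let $\ell_n := n^{-d}\sum_{x\in\Delta^d_n}\delta_{x/n}\in\cP(\bT^d)$ be the normalized counting measure on $\mathbb{T}^d_n$. By definition of $\pi_n$ one has $\pi_n(\sigma_n)=n^{-d}\sum_{x\in\Delta^d_n}\delta_{(x/n,\,\sigma_n(x))}$, so its $\bT^d$-marginal is exactly $\ell_n$, one has $\sum_{b=1}^q\pi_n[b](\sigma_n)=\ell_n$ as measures on $\bT^d$, and each summand satisfies $\pi_n[b](\sigma_n)\leq\ell_n$. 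Moreover $\ell_n\to\lambda$ weakly, by the standard Riemann-sum argument (for $f\in C(\bT^d)$, uniform continuity of $f$ yields $\int f\,\dd\ell_n\to\int f\,\dd\lambda$).

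Next I would transfer the hypothesis to the colour components. Testing the convergence $\pi_n(\sigma_n)\to\nu$ against functions of the form $(u,a)\mapsto f(u)\bONE_{a=b}$ with $f\in C_b(\bT^d)$ shows $\pi_n[b](\sigma_n)\to\nu[b]$ weakly in $\cM_+(\bT^d)$ for every $b$, where $\nu[b]\in\cM_+(\bT^d)$ denotes the $b$-th component measure of $\nu$. Summing over $b$ and using $\sum_{b=1}^q\pi_n[b](\sigma_n)=\ell_n\to\lambda$ identifies the $\bT^d$-marginal of $\nu$ as $\sum_{b=1}^q\nu[b]=\lambda$. For the domination, for any nonnegative $f\in C(\bT^d)$ the inequality $\pi_n[b](\sigma_n)\leq\ell_n$ gives
\[
\int f\,\dd\nu[b] \;=\; \lim_{n\to\infty}\int f\,\dd\pi_n[b](\sigma_n) \;\leq\; \lim_{n\to\infty}\int f\,\dd\ell_n \;=\; \int f\,\dd\lambda ,
\]
and approximating indicators of open sets from below by nonnegative continuous functions upgrades this to $\nu[b](U)\leq\lambda(U)$ for all open $U$, hence $\nu[b]\leq\lambda$ as Borel measures on $\bT^d$.

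It then follows at once that $\nu[b]\ll\lambda$, so the density $\alpha[b]:=\dd\nu[b]/\dd\lambda$ exists and satisfies $0\leq\alpha[b]\leq 1$ $\lambda$-a.e., whence $\alpha[b]\in L^\infty(\bT^d,\lambda)$; and $\sum_{b=1}^q\alpha[b]\lambda=\sum_{b=1}^q\nu[b]=\lambda$ forces $\sum_{b=1}^q\alpha[b]=1$ $\lambda$-a.e. Thus $\alpha:=(\alpha[1],\dots,\alpha[q])\in B$ and $\nu=\alpha\lambda\in\cP_q(B)$. I do not expect a genuine obstacle here; the only point needing mild care is the passage from the test-function inequality to the measure inequality $\nu[b]\leq\lambda$, which hinges on the domination $\pi_n[b](\sigma_n)\leq\ell_n$ holding at the level of measures — clear in the present setting since both sides are built from the same point masses $\delta_{x/n}$.
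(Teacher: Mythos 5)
Your proof is correct, and it takes a genuinely more direct route than the paper's own argument. The paper proves Lemma \ref{lemma:possible_limits_of_profiles} via an auxiliary lemma in Appendix \ref{appendix:absolute_continuity} about weak limits of sub-empirical measures $n^{-d}\sum_{i\in\Lambda_n}\delta_i$: that lemma is established in two stages, first invoking Lusin's theorem to show any limit is absolutely continuous with $L^1$-density of norm at most one, then pairing against $L^1$ test functions and appealing to $L^1$--$L^\infty$ duality to conclude the density is in the unit ball of $L^\infty$. You instead exploit the elementary measure-level domination $\pi_n[b](\sigma_n)\leq\ell_n$ together with $\ell_n\to\lambda$ (a Riemann-sum fact), pass the inequality to the weak limit on nonnegative continuous test functions, upgrade it to $\nu[b]\leq\lambda$ via Urysohn approximation of open-set indicators and outer regularity, and read off the density bounds directly from the Radon--Nikodym theorem. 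Your route avoids Lusin's theorem and duality arguments entirely, and it has the additional advantage of treating the full colour vector at once: the identity $\sum_b\pi_n[b](\sigma_n)=\ell_n$ hands you $\sum_b\alpha[b]=1$ $\lambda$-a.e.\ in the same breath, whereas the paper's appendix lemma only controls each component measure separately and the normalisation constraint must be supplied afterwards. The only step worth stating a little more carefully is the upgrade from "$\int f\,\dd\nu[b]\leq\int f\,\dd\lambda$ for $0\leq f\in C(\bT^d)$" to "$\nu[b]\leq\lambda$ on all Borel sets"; this is exactly the combination of inner approximation of open sets by continuous functions (giving the inequality on open sets) and outer regularity of finite Borel measures on a metric space (extending it to all Borel sets), which you indicate correctly.
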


\subsection{Sequential non-Gibbsianness for the Kac models}

For $n \geq 1$ and $u \in \bT^d_n$, we call $\pi_n^{(u)}:=\pi_{\Delta_n^d\setminus\{nu\}}$ the \textit{color profile perforated at} $u \in \bT^d_n$. We abbreviate $\mathcal{M}_{q,n}:=\pi_n(\Omega_{q,n})\subseteq \cP(\bT^d \times \{1,\dots,q\})$ and $\mathcal{M}_{q,n}^u:=\pi_n^{(u)}(\Omega_{q,n}) \subseteq \cP(\bT^d \times \{1,\dots,q\})$ for the sets of possible profiles of mesh-size $n^{-1}$ and possible profiles of mesh-size $n^{-1}$ perforated at site $u$. Note that by Lemma \ref{lemma:possible_limits_of_profiles} any limit $\nu$ of profiles $\nu_n \in \cM_{q,n}$ or $\cM_{q,n}^{(u_n)}$ must lie in $\cP_q(B)$.

\smallskip

We slightly change the definition of sequential Gibbsianness defined for Kac models as in \cite{JaKu17} and \cite{FHM14}. This definition is essentially equivalent but notationally and mathematically more convenient. 

\begin{definition}\label{GnG}
Given any sequence $(\mu_n)_{n\in\N}$ with $\mu_n$ a probability measure on $\Omega_{q,n}$ for every $n\in\bN$, define the single-spin conditional probabilities at site $u\in \bT_n^d$ as 
\begin{equation}\label{singesitespec}
\gamma^u_n\left(\cdot \, \middle| \, \nu^{(u)}_{n}\right):=\mu_n\left(\sigma(n u) =\cdot \, \middle| \, \pi^{(u)}_n(\sigma) = \nu^{(u)}_{n}\right) \hspace{1cm} \nu^{(u)}_{n}\in\mathcal{M}^u_{q,n}.
\end{equation}
\begin{enumerate}[(a)]
\item We call a colour profile $\nu \in \cP_q(B)$ \textbf{good} for a sequence of probability measures $(\mu_n )_{n\in\N}$, $\mu_n \in \cM_{q,n}$, if there exists a neighbourhood $\mathcal{N}_\nu \subseteq \cP(\bT^d \times \{1,\dots,q\})$ of $\nu$ such that for all $\tilde\nu \in\mathcal{N}_\nu \cap \cP_q(B)$ and for all $u\in\bT^d$
\begin{equation} \label{eqn:def_limiting_kernel}
\gamma^u(\cdot \, | \, \tilde\nu):=\lim_{n\uparrow\infty}\gamma^{u_n}_n(\cdot \, | \, \nu^{(u_n)}_{n})
\end{equation}
exists for all sequences $u_n \in \bT_n^d$ with $u_n \rightarrow u$ and all sequences $(\nu^{(u_n)}_{n})_{n\in\N}$ with $\nu^{(u_n)}_{n}\in\cM^{u_n}_{q,n}$ for every $n\in\N$ such that $\lim_{n\uparrow\infty}\nu^{(u_n)}_{n}=\tilde\nu$ in the weak sense. Moreover the limit must be independent of the choice of $u_n$ and  $(\nu^{(u_n)}_{n})_{n\in\N}$. 
\item A colour profile $\nu \in \cP_q(B)$ is called \textbf{bad} for $(\mu_n)_{n\in\N}$ if it is not good for $(\mu_n)_{n\in\N}$.
\item $(\mu_n)_{n\in\N}$ is called sequentially Gibbs if it has no bad profiles in $\cP_q(B)$. 
\end{enumerate}
\end{definition}

\begin{remark}
By Lemma \ref{lemma:continuity_of_kernel_abstract} below, it follows that if $(\mu_n)_{n\in\N}$ is sequentially Gibbs, then the map $\gamma : \bT^d \times \cP_q(B) \rightarrow \cP(\{1,\dots,q\})$, defined by $\gamma(u,\nu) = \gamma^{u}(\cdot \, | \, \nu)$ as in \eqref{eqn:def_limiting_kernel} is continuous.
\end{remark}

\subsection{Sequential Gibbsianness for the fuzzy Kac-Potts model} \label{section:seq_gibbs_for_FKP}

Next, we introduce the fuzzy Kac-Potts model. Consider a discretization map $T:\{1,\dots,q\}\mapsto\{1,\dots,s\}$ where $1<s<q$. More precisely, let $R_1,\dots,R_s$ be a partition of $\{1,\dots,q\}$ with $r_i=|R_i|$ and $\sum_{i=1}^sr_i=q$, then $T(a)=i$ if $a\in R_i$. The map $T$ induces a local discretization map $T : \Omega_{q,n} \rightarrow \Omega_{s,n}$ by applying $T$ at every site. The \textit{fuzzy Kac-Potts model} is obtained from the Kac-Potts model by applying the discretization map at every site: $\mu^T_n:=\mu_n \circ T^{-1} \in \cP(\Omega_{s,n})$, where $\mu^n$ is the Kac-Potts measure.

\begin{definition}
We call the generalized fuzzy KPM sequentially Gibbs if all profiles $\cP_s(B)$
are good for the sequence $\mu_n^T$.
\end{definition}

Jahnel and Külske studied the question of sequential Gibbsianness in \cite{JaKu17}. Their main result states that for the fuzzy KPM the critical parameters for GnG are the same as for the mean-field fuzzy PM if the parameters are such that low temperature Ising classes are avoided. Denote by $\beta_c(r)$ the inverse critical temperature of the $r$-state mean-field PM.

\begin{theorem}[Theorem 2.7 in \cite{JaKu17}]\label{HaKu-Kac}
Consider the $q$-state KPM at inverse temperature $\b$ and let $s$ and $r_1,\dots,r_s$ be positive integers with $1<s<q$ and $\sum^s_{i=1} r_i=q$. Consider the limiting conditional probabilities of the corresponding fuzzy KPM with spin partition $(r_1,\dots,r_s)$ where $r*:=\min\{r\geq3,r=r_i\text{ for some }i=1,\dots,s\}$.

\begin{enumerate}[(a)]
	\item Suppose that either $\b\leq \b_c(2)$ or that $r_i\not = 2$ for all $i=1,\dots,s$ and $\b<\b_c(r*)$, then the fuzzy KPM is sequentially Gibbs. 
	\item If $r_i\geq3$ for some $i=1,\dots,s$ and $\b\ge\b_c(r_*)$, then the fuzzy KPM is non-Gibbs. 
\end{enumerate}
\end{theorem}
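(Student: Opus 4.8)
\emph{Plan of proof.} The idea is to reduce the limiting single-site conditional kernels of the fuzzy model to a constrained free-energy variational problem, and then to read off good and bad profiles from the uniqueness, or non-uniqueness, of its minimizers. Fix a target fuzzy profile $\tilde\nu=\tilde\alpha\lambda\in\cP_s(B)$. Conditioning the fuzzy measure $\mu_n^T$ on a perforated fuzzy profile close to $\tilde\nu$ amounts, on the level of the underlying Potts configurations, to conditioning $\mu_n$ on the (random) set of Potts profiles whose $T$-image is close to $\tilde\nu$. First I would use the representation of the conditional kernels of a Kac model together with the diluted large deviation principle for perforated colour profiles from \cite{JaKu17}: as $n\uparrow\infty$, the dominant contribution to the conditioned measure comes from Potts profiles $\alpha\lambda\in\cP_q(B)$ that minimize the free-energy functional $\mathcal{F}_\beta(\alpha)=\mathcal{S}(\alpha)-\beta\sum_{a=1}^q F(\alpha[a]\lambda)$ (an entropy term minus the Kac energy), subject to the linear constraints $\sum_{a\in R_i}\alpha[a]=\tilde\alpha[i]$ $\lambda$-a.e.\ for $i=1,\dots,s$. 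The limiting kernel $\gamma^u(i\,|\,\tilde\nu)$ is then an explicit continuous function of the value of the minimizing profile near the hole $u$; by a standard argmax/contraction argument for LDPs with finitely many minimizers one gets that $\tilde\nu$ is \textbf{good} exactly when this minimizer is unique in the sense relevant for the kernel and is selected continuously under small perturbations of the conditioning, and \textbf{bad} when several such minimizers coexist and are selected discontinuously.

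Next I would analyse the constrained variational problem for a spatially homogeneous conditioning $\tilde\alpha[i]\equiv t_i$. The translation invariance of $J$ and the structure of $\mathcal{F}_\beta$ make the problem decouple over the classes $i=1,\dots,s$: on the $i$-th block it reduces, after renormalising the masses inside $R_i$ by $t_i$, to the Curie--Weiss--Potts variational problem for $r_i$ colours at an effective inverse temperature which equals $\beta$ when $t_i=1$ and depends continuously on $t_i$. Here I would invoke the known landscape of the mean-field Potts free energy: for $r=2$ the minimizer is unique for $\beta\le\beta_c(2)$ and there are exactly two spin-flip related minimizers for $\beta>\beta_c(2)$; for $r\ge3$ the minimizer is unique for $\beta<\beta_c(r)$, while at and above $\beta_c(r)$ the symmetric profile coexists with $r$ ferromagnetic ones (first-order transition). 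When the mean-field problem has a unique minimizer it is the homogeneous one, and when it has several each gives rise to a homogeneous minimizer of the full Kac problem.

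Assembling these ingredients gives the dichotomy. In case (a): all classes are below their critical coupling — size-$1$ classes are trivial, size-$2$ classes use $\beta\le\beta_c(2)$, size-$\ge3$ classes use $\beta<\beta_c(r_*)\le\beta_c(r)$ together with the monotonicity $\beta_c(2)\le\beta_c(r)$ for $r\ge3$ — so for every conditioning in a neighbourhood of any $\tilde\nu$ (using openness of the uniqueness regime) the minimizer is unique and varies continuously; hence by Lemma \ref{lemma:continuity_of_kernel_abstract} every profile is good and the fuzzy KPM is sequentially Gibbs. In case (b): choose $i_*$ with $r_{i_*}=r_*\ge3$; since $\beta\ge\beta_c(r_*)$ the corresponding block has coexisting (symmetric and ferromagnetic) minimizers. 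Tuning the mass $t_{i_*}$ and the ambient colour masses of a homogeneous conditioning $\tilde\nu$ to sit at such a coexistence point, and then approaching $\tilde\nu$ along two perturbing sequences of admissible fuzzy profiles that tilt the free energy towards different minimizers, produces two different limiting kernels at $\tilde\nu$; thus $\tilde\nu$ is bad and the fuzzy KPM is non-Gibbs.

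The main obstacle is the passage from the genuinely space-dependent Kac variational problem back to the mean-field picture in the \emph{positive} (Gibbs) direction: one must rule out that a non-homogeneous minimizing profile, or a non-homogeneous conditioning, creates extra minimizers or discontinuities beyond the mean-field ones. This requires a strict lower-semicontinuity/convexity analysis of $\mathcal{F}_\beta$ on $B$ under the linear constraints, and it is precisely here that low-temperature Ising classes ($r_i=2$, $\beta>\beta_c(2)$) are delicate: the two symmetric Ising minimizers persist, and one must show they nevertheless do not cause the kernel to be selected discontinuously. This is why the theorem above is stated with such classes excluded, and why closing the "Ising gap" is exactly the content of the dedicated uniqueness-of-minimizers result Proposition \ref{proposition:minimizers_I_and_convergence}. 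A secondary technical point is the careful use of the diluted LDP to identify the explicit limiting functional of the minimizer and to justify convergence of the finite-$n$ conditional single-spin measures.
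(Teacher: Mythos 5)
The paper does not itself prove this statement: Theorem~\ref{HaKu-Kac} is quoted verbatim from \cite{JaKu17} as a precursor to the new Theorem~\ref{theorem:fuzz_kac_potts_gng}, and the paper's Section~\ref{section:proof_main_theorem} explicitly defers its cases (b) and (c) to ``the arguments of \cite{JaKu17}.'' So there is no in-paper proof to compare against; what I can do is compare your outline with the framework the paper sets up for the adjacent result.

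Your plan is in the right spirit (representation of the kernels, diluted LDP, identification of minimizers), and you correctly diagnose that the danger lies in ruling out spatially inhomogeneous minimizers. But there are two points where the proposal misreads the structure and one where it has a genuine gap. First, the reduction to per-class variational problems is \emph{not} a feature of homogeneous conditionings or of translation invariance of $J$; it is an exact algebraic identity. Proposition~\ref{proposition:FuzzyKernel} shows the single-site kernel is a ratio of integrals of $\cA_{r_l}$ against \emph{independent} diluted Potts measures $P_{\Lambda_{l,u}(\nu),\beta_{l,u}(\nu),r_l}$, one per fuzzy class, because Potts spins only interact when equal and hence never across classes. Your global ``constrained free-energy functional $\mathcal F_\beta(\alpha)$ with linear constraints $\sum_{a\in R_i}\alpha[a]=\tilde\alpha[i]$'' is not wrong, but it obscures this decoupling, which is the key that lets one analyse each class on its own with its own rescaled inverse temperature $\tilde\beta_i=\beta\,\nu[i](\bT^d)$. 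Second, in the non-Gibbs direction you write of ``tuning the ambient colour masses of a homogeneous conditioning''; the conditioning in the fuzzy model only fixes the fuzzy-class profile $\nu$, hence only the mass and shape of each $\nu[i]$, not the sub-colour masses inside a class --- those are what the diluted Potts model on $\Lambda_{i,u}(\nu)$ randomizes over. The discontinuity has to be produced by moving $\nu$ (and hence $\tilde\beta_{i_*}$ and the diluted geometry) across the first-order transition of the $r_*$-state diluted model, not by steering internal colour masses.

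The genuine gap is exactly the step you flag as ``the main obstacle'': showing that, below the relevant critical inverse temperature, the diluted Kac rate function $I_{\tilde\rho,\tilde\beta}$ in Proposition~\ref{proposition:DiLDP} has a \emph{unique} minimizer and that it is spatially homogeneous, for general (non-constant) conditioning densities $\tilde\rho$. Pointing to ``the known landscape of the mean-field Potts free energy'' does not supply this, because the minimization is over profiles in $B$, not over constants, and the conditioning density $\tilde\rho$ enters non-trivially. This is where the real work of \cite{JaKu17} (for classes of size $\ge 3$) sits, and it is precisely the analogous question for size-$2$ classes with non-constant $\tilde\rho$ that was left open there and is resolved in the present paper by the new Proposition~\ref{proposition:minimizers_I_and_convergence} and Lemmas~\ref{lemma:minimizers_are_nonnegative}--\ref{lemma:uniqueness_of_non_neg_minimizer}. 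As stated, your proposal reproduces the architecture of the argument but leaves its central analytic step as an unproved assertion, so it is not yet a proof of the theorem.
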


This result should be compared to Theorem 1.2 in \cite{HK04}, which establishes a full characterization of the parameter regions in which the sequence of measures is sequentially Gibbs or not. Theorem \ref{HaKu-Kac} leaves open the question what happens if there are fuzzy classes of size $2$ and $\beta \geq \beta_c(2)$. The main result of this paper establishes that the fuzzy Kac-Potts model is also sequentially Gibbs if all classes are of size $2$ or $\beta<\beta_c(r*)$, establishing a full characterization with the same parameter regime as for the mean-field fuzzy Potts model.

\begin{theorem}\label{theorem:fuzz_kac_potts_gng}
	Consider the $q$-state Kac-Potts model at inverse temperature $\b$, 
	and let $s$ and $r_1,\dots,r_s$ be positive integers with $1<s<q$ and $\sum^s_{i=1} r_i=q$. Consider the limiting conditional probabilities of the corresponding fuzzy Kac-Potts model with spin partition $(r_1,\dots,r_s)$.
	\begin{enumerate}[(a)]
		\item Suppose that $r_i\leq2$ for all $i=1,\dots,s$. Then the fuzzy Kac-Potts model is sequentially Gibbs for all $\b\geq0$.
	\end{enumerate}
	\noindent  
	Assume that $r_i\geq3$ for some $i$ and put $r*:=\min\{r\geq3,r=r_i\text{ for some }i=1,\dots,s\}$, then the following holds. 
	\begin{enumerate}
		\item[(b)] If $\b<\b_c(r*)$ then the fuzzy Kac-Potts model is sequentially Gibbs.
		\item[(c)] If $\b\geq\b_c(r*)$ then the fuzzy Kac-Potts model is not sequentially Gibbs.
	\end{enumerate}
\end{theorem}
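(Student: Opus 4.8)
The plan is to reduce Theorem \ref{theorem:fuzz_kac_potts_gng} to statements about the single-site conditional kernels $\gamma_n^u(\cdot\,|\,\nu_n^{(u)})$ and their asymptotics, which by the representation recalled in Section \ref{section:representation_kernels_and_LDP} are governed by conditional variational problems solved via the diluted large deviation principle from \cite{JaKu17}. The overall architecture mirrors \cite{JaKu17}: conditioning on a perforated fuzzy profile $\nu^{(u)}$ amounts to conditioning the Kac-Potts model on the event that the fuzzy profile is (close to) a prescribed element of $\cP_s(B)$, and the cost of this event is the value of a constrained rate-function minimization over Potts profiles $\alpha\lambda \in \cP_q(B)$ compatible with the fuzzy constraint. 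The limiting kernel $\gamma^u(\cdot\,|\,\tilde\nu)$ exists and is continuous in $\tilde\nu$ precisely when the associated minimizer is unique and varies continuously; non-Gibbsianness in part (c) comes from the known first-order-transition mechanism inside an Ising or Potts class of size $\geq 3$, exactly as in part (b) of Theorem \ref{HaKu-Kac}. So parts (b) and (c) are already essentially contained in \cite{JaKu17}; the genuinely new content is part (a), and within (a) the new case to close is that of classes of size exactly $2$ with $\beta \geq \beta_c(2)$.

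For part (a) the first step is to set up, at a fixed conditioning profile $\tilde\nu = \alpha\lambda \in \cP_s(B)$ and fixed site $u$, the conditional variational problem whose minimizers control $\gamma^u(\cdot\,|\,\tilde\nu)$: minimize the (spatially inhomogeneous) large-deviation rate functional over Potts profiles $\beta[\cdot]\lambda$ subject to the linear constraint that summing the colour densities within each fuzzy class $R_i$ reproduces the prescribed $\alpha[i]$, the difficulty being that when some $R_i$ has size $2$ there is a one-parameter family of ways to split $\alpha[i]$ into two colour densities, and at low temperature the energy term $\sum_a F(\cdot)$ favours symmetry-breaking (Ising-type) splittings. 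The key step is then Proposition \ref{proposition:minimizers_I_and_convergence}: I would invoke it to show that, even so, the minimizing profile for this non-homogeneous problem is \emph{unique} (and, crucially, that it is the spatially non-constant profile dictated by the conditioning rather than a genuinely new "bad" broken-symmetry profile), and that it depends continuously on $\tilde\nu$. Granting uniqueness and continuity of minimizers, the limiting kernel is obtained by plugging the minimizer into the explicit kernel representation (this is Theorem \ref{theorem:form_of_kernels_in_Gibbs_case}), and continuity of $\tilde\nu \mapsto \gamma^u(\cdot\,|\,\tilde\nu)$ follows from Appendix-type continuity arguments (Lemma \ref{lemma:continuity_of_kernel_abstract} and the convergence-of-random-variables results in the appendices); independence of the approximating sequences $u_n \to u$ and $\nu_n^{(u_n)} \to \tilde\nu$ is handled by the standard diagonal/compactness argument since the constrained minimizers along any such sequence accumulate only at the unique limiting minimizer.

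For parts (b) and (c), assuming $r_i \geq 3$ for some $i$ and $r_* = \min\{r_i : r_i \geq 3\}$: for (b) with $\beta < \beta_c(r_*)$, one checks that at every conditioning profile the relevant constrained mean-field-type minimization over the splitting within each class has a unique minimizer (here the classes of size $\geq 3$ are in their uniqueness regime and classes of size $\leq 2$ are handled by the part-(a) analysis), so the same continuity/uniqueness package gives sequential Gibbsianness. For (c) with $\beta \geq \beta_c(r_*)$, one exhibits an explicit bad profile: take a conditioning profile that is homogeneous and puts the class $R_{i_0}$ of size $r_* \geq 3$ at the value where the $r_*$-state mean-field Potts model sits at its transition point; there the constrained minimization has multiple minimizers (the symmetric one and the $r_*$ ordered ones) with a genuine discontinuity in the selected minimizer under small perturbations of the conditioning, so two different approximating sequences $\nu_n^{(u_n)} \to \tilde\nu$ yield different limits of $\gamma_n^{u_n}$, witnessing badness; this is exactly the mechanism of \cite[Thm.~2.7(b)]{JaKu17} and only needs to be recalled.

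The main obstacle is the uniqueness-of-minimizers statement for the non-homogeneous conditional variational problem with size-$2$ classes at $\beta \geq \beta_c(2)$ — i.e.\ Proposition \ref{proposition:minimizers_I_and_convergence} itself — because the spatial inhomogeneity forced by the conditioning competes with the energetically-favoured Ising symmetry-breaking, and one must prove analytically that the conditioning always "pins" the split and no new spatially-structured bad minimizer emerges; everything else is a matter of feeding uniqueness and continuity of minimizers through the already-established LDP-to-kernel machinery.
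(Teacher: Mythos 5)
Your overall architecture (reduce to the per-class kernel representation, feed the diluted LDP through the uniqueness-of-minimizers statement, recall \cite{JaKu17} for parts (b) and (c)) matches the paper's. But there is a genuine gap in how you propose to close the Ising case, and it sits exactly at the point you identify as the main obstacle. You claim that Proposition \ref{proposition:minimizers_I_and_convergence} should show that ``the minimizing profile for this non-homogeneous problem is \emph{unique}'' and that the conditioning ``pins the split'' so that ``no new spatially-structured bad minimizer emerges.'' This is false in the supercritical regime and is not what the proposition says. For an Ising class with $\beta\geq\beta_c(2)$ (more precisely, when the local inverse temperature $b_{\tilde\beta,\tilde\rho,J}$ is supercritical), the rate function $I_{\tilde\rho,\tilde\beta}$ has \emph{two} distinct global minimizers $\phi^{*,\pm}$ with everywhere-positive, respectively everywhere-negative, spatially non-homogeneous magnetization $\pm m_{\tilde\rho,\tilde\beta}$: symmetry breaking genuinely occurs and the conditioning does not remove it. An attempt to prove uniqueness of the minimizer would therefore fail. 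The mechanism that actually saves Gibbsianness is different: (i) the \emph{non-negative} global minimizer is unique (the new analytical content, proved via the stationarity inequality, strict convexity of $\arctanh$ on $[0,\infty)$, and comparison with the pointwise maximum $m_1\vee m_2$), so the minimizer set is exactly the symmetric pair $\{\phi^{*,+},\phi^{*,-}\}$; (ii) the spin-flip symmetry of the diluted Ising measure forces any weak limit of $P_n$ to be the equal-weight mixture $\tfrac12(\delta_{\phi^{*,+}}+\delta_{\phi^{*,-}})$, which is a single well-defined measure depending only on $(\tilde\rho,\tilde\beta)$; and (iii) integrating $\cA_2$ against this mixture produces the explicit factor $2e^{\tilde\beta(J*\tilde\rho)(u)}/\sqrt{1-m_{\tilde\rho,\tilde\beta}(u)^2}$, which behaves continuously in the conditioning. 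So ``unique limiting kernel'' comes from uniqueness of the \emph{symmetric mixture over the minimizer set}, not uniqueness of the minimizer.

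A secondary, more cosmetic difference: you frame the variational problem as a constrained minimization over full Potts profiles subject to the fuzzy-class marginal constraint, with a ``one-parameter family of splittings'' inside each size-$2$ class. The paper never needs this constrained formulation: Proposition \ref{proposition:FuzzyKernel} already decouples the finite-$n$ kernel into a ratio of integrals against \emph{independent, unconstrained} diluted $r_l$-state Potts models on the sets $\Lambda_{l,u}(\nu)$, so the conditioning enters only through the dilution density $\tilde\rho$ and the renormalized temperature $\tilde\beta_l$, and one applies the free diluted LDP classwise. Your constrained formulation is morally equivalent but would require extra work to justify; more importantly, it obscures the fact that within a single Ising class the problem is a free (diluted, inhomogeneous) Ising variational problem whose $\pm$ symmetry is exact, which is precisely what makes step (ii) above work.
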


Like in the proof of Theorem 2.7 in \cite{JaKu17}, i.e. Theorem \ref{HaKu-Kac}, the analysis is based on a representation of the single-site kernels in terms of diluted Potts models, and an identification of the minimizers of an LDP for these diluted models. Unlike in the setting of classes of size larger or equal to three, for the `low temperature' Ising classes we find exactly two minimizers with opposite local magnetization that are not spatially homogeneous. To give a representation for the limiting kernels in Theorem \ref{theorem:form_of_kernels_in_Gibbs_case} below, we therefore need to introduce the corresponding representation and large deviation principle.

\subsection{A representation for the fuzzy Potts-Kac model kernels and a diluted large deviation principle} \label{section:representation_kernels_and_LDP}

In order to determine Gibbsianness of the fuzzy KPM, similar to \eqref{singesitespec}, as well as to give a representation for the limiting single-site kernels, we write for the single-site kernels
\begin{equation}\label{eqn:Representation_Kernel_FPK}
\gamma^{u}_{n,\beta,q,(r_1,\dots,r_s)}(k \,| \, \nu):=\mu_n^T(\sigma(nu)=k \, | \, \pi_{n}^{(u)}(\sigma)=\nu)
\end{equation}
where $\beta$ is the inverse temperature of the KPM and $\nu\in\cM^u_{s,n}$. It was shown in \cite{JaKu17} that the kernels can be re-expressed in terms of certain functionals integrated over KPM models on the fuzzy classes. 

\begin{definition} \label{definition:pre_kernel_A_expression}
	Definite the following three objects:
	\begin{enumerate}[(a)]
		\item For a measure $\nu \in \cM_{s,n}$, $u \in \bT_n^d$ and $i \in \{1,\dots,s\}$, denote by
		\begin{equation*}
		\Lambda_{i,u}(\nu) := \left\{x \in \Delta_n^d \setminus \{un\} \, \middle| \,  \nu[i](x/n) = 1 \right\}
		\end{equation*}
		the set of sites with a spin-value in the $i$-th class. 
		\item $\mu_{\Lambda,\beta,r}$ denotes the KPM in the subvolume $\Lambda\subseteq \Delta^d_n$, $\Lambda \neq \emptyset$, with Hamiltonian 
		\begin{equation*}
		H_{\Lambda}(\sigma):=-\frac{1}{|\Lambda|}\sum_{x,y\in \Lambda}J\left(\frac{x-y}{n}\right)1_{\sigma(x)=\sigma(y)},
		\end{equation*}
		inverse temperature $\beta$ and $r$ local states. 	Denote by $P_{\Lambda,\beta,r} \in \cP(\cP(\bT^d \times \{1,\dots,r\}))$ the push-forward of $\mu_{\Lambda,\beta,r}$ under $\pi_{\Lambda}$:
		\begin{equation*}
		P_{\Lambda,\beta,r} := \mu_{\Lambda,\beta,r} \circ \pi_{\Lambda}^{-1}.
		\end{equation*}
		\item For each class size $r$, define $\cA_r : \cP(\bT^d \times \{1,\dots,r\}) \times \bT^d \times \bR^+  \rightarrow [r,\infty)$as
		\begin{equation*}
		\cA_r(\pi,u,\beta) := \sum_{i=1}^r \exp \left\{2 \beta \left( J \ast \pi[i]\right)(u) \right\}.
		\end{equation*}
	\end{enumerate}
\end{definition}

Note that the functional $\cA$ that we introduce, is not equal to $A$ in \cite{JaKu17}. The functional $A$ includes an integral, which $\cA$ does not. This is done to allow for a better control on and representation of the limiting problem, and the limiting kernels to be considered below.

\begin{proposition}[Proposition 2.4 in \cite{JaKu17}]\label{proposition:FuzzyKernel}
	Fix $n$,$u\in\mathbb{T}^d_n$ and $\nu \in \cM_{s,n}$. Write $\beta_{l,u}(\nu) = \beta n^{-d} |\Lambda_{l,u}(\nu)| = \beta \nu[l](\bT_n^d \setminus \{u\})$ for the renormalized inverse temperature in class $l$. Suppose that $\Lambda_{l,u}(\nu) \neq \emptyset$ for all $l$. Then we have the representation 
	\begin{equation}\label{Representation_Kernel}
	\gamma^{u}_{n,\beta,q,(r_1,\dots,r_s)}(k \,| \,\nu) = \frac{\int \cA_{r_k}(\pi,u,\beta_{k,u}(\nu)) P_{\Lambda_{k,u}(\nu),\beta_{k,u}(\nu),r_k}(\dd \pi)}{\sum_{l=1}^s \int \cA_{r_l}(\pi,u,\beta_{l,u}(\nu)) P_{\Lambda_{l,u}(\nu),\beta_{l,u}(\nu),r_l}(\dd \pi)}.
	\end{equation}
	For a classes $l$ of size $0$, i.e. $\Lambda_{l,u}(\nu) = \emptyset$, the weight
	\begin{equation*}
	\int \cA_{r_l}(\pi,u,\beta_{l,u}(\nu)) P_{\Lambda_{l,u}(\nu),\beta_{l,u}(\nu),r_l}(\dd \pi),
	\end{equation*}
	should be replaced with $\cA_{r_l}(\pi,u,0) = r_k$, consistent with the fact that $\Lambda_{l,u}(\nu) = \emptyset$ implies that $\beta_{l,u}(\nu) = 0$.
\end{proposition}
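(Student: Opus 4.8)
The statement to prove is Proposition \ref{proposition:FuzzyKernel} (Proposition 2.4 in \cite{JaKu17}), giving the representation \eqref{Representation_Kernel} for the single-site kernels.

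\medskip

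The plan is to compute the conditional probability $\mu_n^T(\sigma(nu)=k \,|\, \pi_n^{(u)}(\sigma)=\nu)$ directly from the definition of $\mu_n^T = \mu_n \circ T^{-1}$ by summing over all Potts configurations in the preimage of a fuzzy configuration consistent with the conditioning. First I would fix a fuzzy configuration $\eta\in\Omega_{s,n}$ with $\pi_n^{(u)}(\eta)=\nu$ (perforated at $u$), which by Definition \ref{definition:pre_kernel_A_expression}(a) amounts to fixing the partition of $\Delta_n^d\setminus\{nu\}$ into the sets $\Lambda_{l,u}(\nu)$, $l=1,\dots,s$. The event $\{\pi_n^{(u)}(\sigma\circ\text{fuzzy})=\nu\}$ on the Potts side is then $\{T\sigma(x)=\eta(x)\ \forall x\neq nu\}$, i.e. $\sigma(x)\in R_{\eta(x)}$ for all $x\neq nu$, with $\sigma(nu)$ free. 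The key step is to split the Kac-Hamiltonian $H_n(\sigma)$ into: (i) interactions among sites within each class $\Lambda_{l,u}(\nu)$, (ii) interactions between sites in distinct classes (which vanish since $1_{\sigma(x)=\sigma(y)}=0$ when $T\sigma(x)\neq T\sigma(y)$), and (iii) interactions involving the special site $nu$. Part (i) factorizes the partition sum over the classes into a product of sub-volume Kac-Potts partition functions; part (iii) is exactly where the functional $\cA_{r_k}$ appears.

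\medskip

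Concretely, I would write $-\beta H_n(\sigma) = \sum_{l}\bigl[-\beta H_{\Lambda_{l,u}(\nu)}^{\text{within}}(\sigma)\bigr] + (\text{terms with }nu)$. Since the Hamiltonian \eqref{KPM} has a $1/n^d$ prefactor while Definition \ref{definition:pre_kernel_A_expression}(b) uses $1/|\Lambda|$, the within-class part produces the renormalized inverse temperature $\beta_{l,u}(\nu)=\beta n^{-d}|\Lambda_{l,u}(\nu)|$ exactly as stated. The two interaction terms involving $nu$ (from $\sum_{x,y}$ with either $x=nu$ or $y=nu$, using symmetry of $J$ and $J(0)$ self-term being a constant independent of the conditioning that cancels in the ratio) contribute, when $\sigma(nu)=k$ so that site $nu$ sits in class $k$, a factor $\exp\{\tfrac{2\beta}{n^d}\sum_{x\in\Lambda_{k,u}(\nu)}J((x-nu)/n)1_{\sigma(x)=\sigma(nu)}\}$. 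Summing over the colour $\sigma(nu)=a\in R_k$ and recognizing $\tfrac{1}{n^d}\sum_{x\in\Lambda}J((x-nu)/n)1_{\sigma(x)=a} = (J\ast \pi_{\Lambda_{k,u}(\nu)}[a'](\sigma))(u)$ for the appropriate local colour index $a'$, the sum over the $r_k$ colours in $R_k$ yields precisely $\cA_{r_k}(\pi_{\Lambda_{k,u}(\nu)}(\sigma),u,\beta_{k,u}(\nu))$ — here one must carefully match the relabeling of the $r_k$ colours of class $k$ to $\{1,\dots,r_k\}$ and check that $J\ast$ and the renormalization of $\beta$ are consistent, i.e. $\beta_{k,u}(\nu)\cdot\tfrac{1}{|\Lambda|}(\cdots) = \beta\cdot\tfrac{1}{n^d}(\cdots)$. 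Averaging the remaining $\sigma|_{\Lambda_{k,u}(\nu)}$-dependence against the normalized within-class Kac-Potts measure $\mu_{\Lambda_{k,u}(\nu),\beta_{k,u}(\nu),r_k}$ and pushing forward under $\pi_{\Lambda_{k,u}(\nu)}$ gives the numerator $\int \cA_{r_k}(\pi,u,\beta_{k,u}(\nu))\,P_{\Lambda_{k,u}(\nu),\beta_{k,u}(\nu),r_k}(\dd\pi)$; the denominator is the same sum over $k$, since all other factors (the product of partition functions over classes $l\neq k$, the cancelled self-interaction constant) are identical across the choices of $\sigma(nu)$ and cancel in the normalization. Finally, I would treat the degenerate case $\Lambda_{l,u}(\nu)=\emptyset$: then there are no within-class interactions and no $J\ast$ contribution, $\beta_{l,u}(\nu)=0$, and the weight reduces to counting the $r_l$ colour choices for $\sigma(nu)$, i.e. $\cA_{r_l}(\cdot,u,0)=r_l$, matching the stated convention.

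\medskip

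The main obstacle is purely bookkeeping: correctly isolating the site-$nu$ interaction terms, tracking the factor-of-two from the symmetric double sum, verifying that the self-interaction $J(0)/n^d$ term is a conditioning-independent constant that cancels, and — most delicately — confirming that the within-class normalizing constants $Z_{\Lambda_{l,u}(\nu),\beta_{l,u}(\nu),r_l}$ appear identically in every term of numerator and denominator so that the ratio collapses to \eqref{Representation_Kernel}. There is no analytic difficulty here; this is an exact finite-volume computation, and the result is essentially a restatement of \cite[Proposition 2.4]{JaKu17} adapted to the current normalization of $\cA$ (which, as remarked after Definition \ref{definition:pre_kernel_A_expression}, omits an integral present in the $A$ of \cite{JaKu17}).
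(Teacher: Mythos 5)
The paper does not prove this proposition itself --- it imports it as Proposition~2.4 from \cite{JaKu17}, and only adds Remark~\ref{remark:empty_class_1} noting that the argument in \cite{JaKu17} extends to classes of size $0$. So there is no ``paper's own proof'' to compare against, only the cited reference. That said, your outline is the correct and essentially canonical derivation, and it is in line with how \cite{JaKu17} establishes the representation. You correctly identify the decisive facts: the conditioning event determines the partition of $\Delta_n^d\setminus\{nu\}$ into the classes $\Lambda_{l,u}(\nu)$ with the spin at $nu$ free; the Potts interaction $1_{\tau(x)=\tau(y)}$ vanishes across distinct fuzzy classes, which factorizes $Z$-sums over classes; the change of normalization from $n^{-d}$ to $|\Lambda_l|^{-1}$ produces exactly $\beta_{l,u}(\nu)=\beta n^{-d}|\Lambda_{l,u}(\nu)|$; the doubled (by symmetry of $J$) site-$nu$ interaction with $\Lambda_{k,u}(\nu)$, summed over the $r_k$ colours in $R_k$, reproduces $\cA_{r_k}(\pi_{\Lambda_{k,u}(\nu)}(\tau),u,\beta_{k,u}(\nu))$; and the self-term $e^{\beta J(0)/n^d}$ together with the product $\prod_l Z_{\Lambda_l,\beta_l,r_l}$ cancels between numerator and denominator, leaving the ratio of expectations $\int\cA_{r_l}\,\dd P_{\Lambda_{l,u}(\nu),\beta_{l,u}(\nu),r_l}$. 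Your treatment of the empty-class degenerate case ($\beta_{l,u}(\nu)=0$, weight $=r_l$) also matches the paper's Remark~\ref{remark:empty_class_1}. The only thing you should make fully explicit in a written-up version is the bijection between colour labels of $R_l$ and local indices $\{1,\dots,r_l\}$ and that the resulting $\pi_{\Lambda_l}$ is indeed the push-forward measure appearing in $P_{\Lambda_l,\beta_l,r_l}$, but you already flag this as the relevant bookkeeping.
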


\begin{remark} \label{remark:empty_class_1}
	In \cite{JaKu17}, classes of size $0$ were not treated. Their proof of the representation, however, shows that indeed the weight equals the size of the class.
\end{remark}

We find that, to study the limiting behaviour of the kernels, we need to study the limiting behaviour of the measures $P_{\Lambda_{l,u}(\nu_n),\beta_{l,u}(\nu_n),r_l}$ when $\nu_n \rightarrow \nu$ and, additionally, continuity properties of $\cA_r$ when $(\beta_n,u_n) \rightarrow (\beta,u)$. We start by considering $\cA_r$.

\begin{lemma} \label{lemma:continuity_of_cA}
	Let $r \geq 1$ and let $\cA_r$ be the map defined in Definition \ref{definition:pre_kernel_A_expression}. Then 
	\begin{enumerate}[(a)]
		\item For each pair $(u,\beta) \in \bT^d \times \bR^+$ the map $\pi \mapsto \cA_r(\pi,u,\beta)$ is an element of $C_b(\cP(\bT^d \times \{1,\dots,r\}))$,
		\item Let $\{(u_n,\beta_n)\}_{n \geq 1}$ be a sequence of pairs in $\bT^d \times \bR^+$ converging to $(u,\beta) \in \bT^d \times \bR^+$, then $\cA_r(\cdot,u_n,\beta_n)$ converges uniformly as a function on $\cP(\bT^d \times \{1,\dots,r\})$ to the function $\cA_r(\cdot,u,\beta)$.
	\end{enumerate}
\end{lemma}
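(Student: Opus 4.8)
The plan is to observe that $\cA_r$ is assembled from the single building block
\begin{equation*}
\pi \mapsto (J\ast\pi[i])(u) = \int_{\bT^d} J(u-v)\,\pi[i](\dd v), \qquad i\in\{1,\dots,r\},
\end{equation*}
by composing with $t\mapsto e^{2\beta t}$ and summing over the finitely many classes, and then to track how each step behaves under the relevant limits. For part (a), fix $(u,\beta)$. Since $J\in C(\bT^d)$ and $\bT^d$ is compact, $v\mapsto J(u-v)$ belongs to $C_b(\bT^d)$, so testing a weakly convergent sequence of measures $\pi$ against $(v,i)\mapsto J(u-v)\bONE_{i=j}\in C_b(\bT^d\times\{1,\dots,r\})$ shows that $\pi\mapsto (J\ast\pi[j])(u)$ is weakly continuous for each $j$; composing with the continuous map $t\mapsto e^{2\beta t}$ and summing over $j$ yields continuity of $\cA_r(\cdot,u,\beta)$. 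For boundedness, note $0\le (J\ast\pi[i])(u)\le \|J\|_\infty\,\pi[i](\bT^d)\le\|J\|_\infty$ because $\pi$ is a probability measure on the product space, so $\cA_r(\cdot,u,\beta)$ takes values in $[r,\, r e^{2\beta\|J\|_\infty}]$, the lower bound $r$ following from the arithmetic--geometric mean inequality together with $(J\ast\pi[i])(u)\ge0$ (this recovers the stated codomain $[r,\infty)$).

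For part (b), let $(u_n,\beta_n)\to(u,\beta)$ and put $M:=2\|J\|_\infty\sup_{k\geq1}\beta_k$, which is finite since $\beta_k\to\beta$. All the numbers $2\beta_n(J\ast\pi[i])(u_n)$ and $2\beta(J\ast\pi[i])(u)$ lie in $[0,M]$, on which $t\mapsto e^t$ is Lipschitz with constant $e^M$; hence, uniformly in $\pi$ and in $i$,
\begin{equation*}
\bigl| e^{2\beta_n(J\ast\pi[i])(u_n)} - e^{2\beta(J\ast\pi[i])(u)} \bigr| \le 2e^M\Bigl( |\beta_n-\beta|\,\|J\|_\infty + \beta\,\sup_{v\in\bT^d}\bigl|J(u_n-v)-J(u-v)\bigr| \Bigr),
\end{equation*}
where we used $(J\ast\pi[i])(u_n)\le\|J\|_\infty$ and $\bigl|(J\ast\pi[i])(u_n)-(J\ast\pi[i])(u)\bigr|\le\sup_{v}|J(u_n-v)-J(u-v)|$, both because $\pi[i](\bT^d)\le1$. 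The first term tends to $0$; the second tends to $0$ by uniform continuity of $J$ on the compact torus $\bT^d$. Summing the $r$ such estimates and taking the supremum over $\pi$ gives $\sup_{\pi}\bigl|\cA_r(\pi,u_n,\beta_n)-\cA_r(\pi,u,\beta)\bigr|\to0$, which is the asserted uniform convergence.

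I do not expect a genuine obstacle here; the only points needing mild care are that weak convergence of a measure on $\bT^d\times\{1,\dots,r\}$ controls each component measure $\pi[i]$ separately, that the uniform bounds $\|J\|_\infty<\infty$ and $\sup_k\beta_k<\infty$ confine the arguments of the exponential to a fixed compact interval on which it is globally Lipschitz, and that continuity of $J$ on the compact torus automatically upgrades to uniform continuity. Finiteness of the number of classes makes the passage from the summands to the sum immediate.
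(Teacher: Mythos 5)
Your proof is correct and follows essentially the same route as the paper: reduce everything to the weak continuity of $\pi\mapsto (J\ast\pi[i])(u)$ for part (a) and to the uniform convergence of $\pi\mapsto (J\ast\pi[i])(u_n)$ coming from the uniform continuity of $J$ on the compact torus for part (b). You simply supply more detail than the paper's two-line argument, in particular making explicit the Lipschitz bound on the exponential over the compact range $[0,M]$ and the handling of $\beta_n\to\beta$, both of which the paper leaves implicit.
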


We immediately obtain the following result, corresponding to the fact that $\int f_n \dd \mu_n \rightarrow \int f \dd \mu$ if $f_n$ converges uniformly to $f$ and $\mu_n$ weakly to $\mu$.

\begin{lemma} \label{lemma:convergence_of_kernels}
	Let $\nu_n \rightarrow \nu$ and $u_n \rightarrow u$. Suppose that for each fuzzy class $l \in \{1,\dots,s\}$ with $\nu[l](\bT^d) > 0$ there exists some measure $P_l$ such that $P_{\Lambda_{l,u_n}(\nu_n),\beta_{l,u_n}(\nu_n),r_l} \rightarrow P_l$ weakly. Then
	\begin{equation*}
	\gamma_{n,\beta,q,(r_1,\dots,r_s)}^{u_n}(k \, | \, \nu_n) \rightarrow \frac{\int \cA_{r_k}(\pi,u,\beta_k(\nu)) P^k(\dd \pi)}{\sum_{l=1}^s \int \cA_{r_l}(\pi,u,\beta_l(\nu)) P^l(\dd \pi)}.
	\end{equation*}
	As above, if a fuzzy class $l$ has no limiting mass, i.e. $\nu[l](\bT^d) = 0$, then the weight should be replaced by $\cA_{r_l}(\cdot,u,0) = r_l$.
\end{lemma}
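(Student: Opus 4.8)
The plan is to substitute the finite-$n$ representation of Proposition~\ref{proposition:FuzzyKernel} into $\gamma^{u_n}_{n,\beta,q,(r_1,\dots,r_s)}(k\,|\,\nu_n)$ and let $n\to\infty$ in each of the $s$ weights appearing in \eqref{Representation_Kernel}, exploiting that each integrand converges uniformly while the integrating measure converges weakly. First I would record two preliminary convergences. Since $\nu_n\to\nu$ weakly in $\cP(\bT^d\times\{1,\dots,s\})$ and $\bT^d\times\{l\}$ is a clopen subset, testing against its indicator (a bounded continuous function) yields $\beta_{l,u_n}(\nu_n)=\beta\,\nu_n[l](\bT^d_n\setminus\{u_n\})\to\beta\,\nu[l](\bT^d)=:\beta_l(\nu)$ for every $l$. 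In particular $(u_n,\beta_{l,u_n}(\nu_n))\to(u,\beta_l(\nu))$ in $\bT^d\times\bR^+$, so Lemma~\ref{lemma:continuity_of_cA}(b) gives that $\cA_{r_l}(\cdot,u_n,\beta_{l,u_n}(\nu_n))\to\cA_{r_l}(\cdot,u,\beta_l(\nu))$ uniformly on $\cP(\bT^d\times\{1,\dots,r_l\})$, which is the common space carrying all the measures $P_{\Lambda_{l,u_n}(\nu_n),\beta_{l,u_n}(\nu_n),r_l}$ for that fixed class.

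Next I would handle the $l$-th weight, distinguishing two cases. If $\nu[l](\bT^d)>0$, then $\nu_n[l](\bT^d)>0$ and hence $\Lambda_{l,u_n}(\nu_n)\neq\emptyset$ for all large $n$, so the weight equals $\int\cA_{r_l}(\pi,u_n,\beta_{l,u_n}(\nu_n))\,P_{\Lambda_{l,u_n}(\nu_n),\beta_{l,u_n}(\nu_n),r_l}(\dd\pi)$; combining the uniform convergence of the integrand with the assumed weak convergence $P_{\Lambda_{l,u_n}(\nu_n),\dots}\to P_l$ and the elementary fact that $\int f_n\,\dd\mu_n\to\int f\,\dd\mu$ when $f_n\to f$ uniformly and $\mu_n\to\mu$ weakly, the weight converges to $\int\cA_{r_l}(\pi,u,\beta_l(\nu))\,P_l(\dd\pi)$. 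If instead $\nu[l](\bT^d)=0$, then $\beta_{l,u_n}(\nu_n)\to0$, so by Lemma~\ref{lemma:continuity_of_cA}(b) the functions $\cA_{r_l}(\cdot,u_n,\beta_{l,u_n}(\nu_n))$ converge uniformly to the constant $\cA_{r_l}(\cdot,u,0)\equiv r_l$; whether $\Lambda_{l,u_n}(\nu_n)=\emptyset$ (weight $=r_l$ by the convention in Proposition~\ref{proposition:FuzzyKernel}) or not (weight $=\int\cA_{r_l}(\pi,u_n,\beta_{l,u_n}(\nu_n))\,P_{\Lambda_{l,u_n}(\nu_n),\dots}(\dd\pi)$, which differs from $r_l$ by at most $\|\cA_{r_l}(\cdot,u_n,\beta_{l,u_n}(\nu_n))-r_l\|_{\infty}$ since the $P$ are probability measures), the weight tends to $r_l$, matching the claimed replacement.

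Finally I would reassemble numerator and denominator. The numerator is the $k$-th weight and converges by the above; the denominator is the finite sum of the $s$ weights, hence also converges. Moreover every weight is bounded below by $r_l\geq1$, because $\beta\geq0$, $J>0$ and $\pi[i]\geq0$ force $\exp\{2\beta(J\ast\pi[i])(u)\}\geq1$ and hence $\cA_{r_l}\geq r_l$; thus the denominators stay uniformly bounded away from $0$ and the ratio converges to the ratio of the limits, which is exactly the asserted expression. I do not anticipate a genuine obstacle: the content is entirely the uniform-convergence-times-weak-convergence principle recalled before the statement, and the only care needed is the bookkeeping for classes whose finite-$n$ support may be empty for some $n$, or whose limiting mass vanishes, which is dispatched by the constant-limit argument above.
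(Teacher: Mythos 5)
Your proposal is correct and follows essentially the same route as the paper's proof: substitute the representation of Proposition \ref{proposition:FuzzyKernel}, split into the cases $\nu[l](\bT^d)>0$ and $\nu[l](\bT^d)=0$, and combine the uniform convergence of $\cA_{r_l}$ from Lemma \ref{lemma:continuity_of_cA}(b) with the assumed weak convergence of the measures, using the lower bound $\cA_{r_l}\geq r_l$ to control the denominator. The extra details you supply (testing against the indicator of the clopen set to get $\beta_{l,u_n}(\nu_n)\to\beta_l(\nu)$, and the explicit bound on the nonempty-but-vanishing-mass case) are correct elaborations of steps the paper treats more briefly.
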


To study the limiting properties of the sequences $P_{\Lambda_{l,u_n}(\nu_n),\beta_{l,u_n}(\nu_n),r_l}$, we proceed with the large deviation principle proven in \cite{JaKu17} for the measures $P_{\Lambda_{l,u_n}(\nu_n),\beta_{l,u_n}(\nu_n),r_l}$.

\begin{definition}
We say that a sequence of volumes $\Lambda_n \subseteq \Delta_n^d = n \bT^d_n$ converges to some measure $\rho \in \cM_+(\bT^d)$, denoted by $\Lambda_n \Rightarrow \rho$ if the empirical measure
\begin{equation*}
\frac{1}{n^d} \sum_{i \in \Lambda_n} \delta_{x/n} \in \cM_+(\bT^d)
\end{equation*}
converges weakly to $\rho$. Abusing notation, if $\rho$ has a density with respect to the Lebesgue measure, then we will denote this density by $\rho$.
\end{definition}

\begin{proposition}[Proposition 2.5 in \cite{JaKu17}]\label{proposition:DiLDP} (Diluted version of LDP for empirical color profiles).   
	Consider a sequence of inverse temperatures $\tilde{\beta}_n \rightarrow \tilde{\beta}$ and a sequence of diluted sets $\Lambda_n \subseteq \Delta_n^d$ with $\Lambda_n\Rightarrow\rho$ for some Lebesgue density $\rho$ with $N_\rho:=\rho\lambda(\mathbb{T}^d)>0$. Denote $\tilde\rho(u):=N_\rho^{-1}\rho(u)$, then the measures $\mu_{\Lambda_n,\tilde{\beta}_n,q}\circ(\pi_{\Lambda_n})^{-1}$ satisfy an LDP on the space $\cP(\bT^d \times \{1,\dots,q\})$ with rate $|\Lambda_n|$ and rate function $I_{\tilde\rho,\tilde{\beta}}-\inf_{\phi \in \cP(\bT^d \times \{1,\dots,q\})}  I_{\tilde\rho,\tilde{\beta}}(\phi)$ where 
	\begin{equation}\label{RateF}
	I_{\tilde\rho,\tilde{\beta}}(\phi)=\begin{cases}
	- \tilde{\beta} \sum_{a=1}^q \ip{J\ast\tilde\rho\alpha[a]}{\tilde\rho\alpha[a]} + \ip{S(\alpha|\mathrm{eq})}{\tilde\rho \lambda}  & \text{if }\phi[a]=\tilde\rho\alpha[a]\lambda,\alpha\in B\\
	\infty & \text{otherwise. }
	\end{cases}
	\end{equation}
\end{proposition}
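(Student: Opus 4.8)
This diluted LDP is the one recalled from \cite{JaKu17}; the plan for proving it is to combine an LDP for an i.i.d.\ reference model with an exponential-tilting argument, carried out on the compact state space $\cP(\bT^d\times\{1,\dots,q\})$. First I would pass to the uniform product measure $U_n$ on $\{1,\dots,q\}^{\Lambda_n}$ and set $Q_n:=U_n\circ\pi_{\Lambda_n}^{-1}$. Writing $\pi_{\Lambda_n}(\sigma)=|\Lambda_n|^{-1}\sum_{x\in\Lambda_n}\delta_{(x/n,\sigma(x))}$, one has the identity
\[
-H_{\Lambda_n}(\sigma)\;=\;|\Lambda_n|\,G\bigl(\pi_{\Lambda_n}(\sigma)\bigr),\qquad G(\phi):=\sum_{a=1}^q\langle J\ast\phi[a],\phi[a]\rangle ,
\]
so that $P_n:=\mu_{\Lambda_n,\tilde\beta_n,q}\circ\pi_{\Lambda_n}^{-1}$ is the exponential tilt of $Q_n$ with density proportional to $\exp(|\Lambda_n|\,\tilde\beta_n G(\phi))$. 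Since $0<J\in C(\bT^d)$, the form $(\mu,\nu)\mapsto\langle J\ast\mu,\nu\rangle$ is weakly continuous and bounded by $\|J\|_\infty$, so $G\in C_b(\cP(\bT^d\times\{1,\dots,q\}))$, and since $\tilde\beta_n\to\tilde\beta$ the functions $F_n:=\tilde\beta_nG$ converge uniformly to $F:=\tilde\beta G$.

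Next I would establish the LDP for $Q_n$ by G\"artner--Ellis. Compactness of $\cP(\bT^d\times\{1,\dots,q\})$ makes exponential tightness automatic, and for $\psi\in C_b(\bT^d\times\{1,\dots,q\})$ the independence of the spins under $U_n$ gives
\[
\frac{1}{|\Lambda_n|}\log\int e^{|\Lambda_n|\langle\psi,\phi\rangle}\,Q_n(\dd\phi)\;=\;\frac{1}{|\Lambda_n|}\sum_{x\in\Lambda_n}\log\Bigl(\tfrac1q\sum_{a=1}^q e^{\psi(x/n,a)}\Bigr),
\]
which, using $\Lambda_n\Rightarrow\rho$ and $|\Lambda_n|/n^d\to N_\rho$, converges to $\Lambda(\psi):=\int\log\bigl(\tfrac1q\sum_a e^{\psi(u,a)}\bigr)\,\tilde\rho(u)\,\lambda(\dd u)$. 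As $\Lambda$ is finite and G\^ateaux-differentiable on $C_b$, the abstract G\"artner--Ellis theorem on this compact space yields an LDP at speed $|\Lambda_n|$ with good rate function $\Lambda^{*}$. Testing with $\psi$ depending on $u$ only forces $\Lambda^{*}(\phi)=\infty$ unless the first marginal of $\phi$ equals $\tilde\rho\lambda$; on that set, writing $\phi(\dd u,\dd a)=\tilde\rho(u)\,\lambda(\dd u)\,\alpha[a](u)$ with $\alpha\in B$, a minimax/measurable-selection argument identifies $\Lambda^{*}(\phi)=\langle S(\alpha|\mathrm{eq}),\tilde\rho\lambda\rangle=:J_0(\phi)$, since the Legendre transform of $v\mapsto\log(\tfrac1q\sum_a e^{v_a})$ on $\bR^q$ is $p\mapsto\sum_a p_a\log(qp_a)$ on the simplex.

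Finally I would run the tilting step: because $Q_n$ satisfies an LDP with good rate function $J_0$ and $F_n\to F$ uniformly with $F$ bounded and continuous, a standard application of Varadhan's lemma (the moving $F_n$ being absorbed by sandwiching between $F\pm\varepsilon$) gives that $P_n$ satisfies an LDP at speed $|\Lambda_n|$ with rate function $(J_0-F)-\inf(J_0-F)$. On the finiteness domain $F(\phi)=\tilde\beta\sum_{a=1}^q\langle J\ast\tilde\rho\alpha[a],\tilde\rho\alpha[a]\rangle$, so that $(J_0-F)(\phi)=I_{\tilde\rho,\tilde\beta}(\phi)$ is exactly the expression in \eqref{RateF}, which finishes the argument. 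I expect the only genuinely delicate point to be the identification $\Lambda^{*}=J_0$ in the second step --- interchanging the supremum over test functions with the spatial integral and checking lower semicontinuity (hence goodness, by compactness) of $J_0$; the reduction and the tilting are routine once compactness of the state space is exploited.
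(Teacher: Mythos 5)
The paper does not actually prove this proposition: it is imported verbatim from \cite{JaKu17} (Proposition 2.5 there), and the only argument supplied in the present paper is the remark following the statement, namely that the generalization from fixed $\tilde{\beta}$ to $\tilde{\beta}_n\to\tilde{\beta}$ follows from Varadhan's lemma applied to a uniformly convergent sequence of bounded continuous tilts. Your third step reproduces exactly that remark, and your first step --- the identity $-H_{\Lambda_n}(\sigma)=|\Lambda_n|\,G(\pi_{\Lambda_n}(\sigma))$ with $G$ bounded and weakly continuous --- is correct and is the same reduction used throughout the paper. What you add beyond the paper is a self-contained derivation of the reference LDP for $Q_n$, which the paper leaves entirely to \cite{JaKu17}. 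Your route via the abstract G\"artner--Ellis/Baldi theorem is fine for the upper bound (compactness of $\cP(\bT^d\times\{1,\dots,q\})$ gives exponential tightness and goodness of the rate function for free), but note that the matching lower bound in that theorem is delivered only at exposed points of $\Lambda^{*}$; in this infinite-dimensional setting you must either verify that such points (profiles whose conditional densities $\alpha[a]$ are bounded away from $0$) are rate-dense in the effective domain and extend by convexity, or --- more cleanly --- prove the lower bound directly by tilting $U_n$ to a product measure with prescribed $u$-dependent single-site marginals. The latter is the standard fix and simultaneously yields the identification $\Lambda^{*}=J_0$ that you correctly flag as the delicate point, since the pointwise Legendre transform $p\mapsto\sum_a p_a\log(qp_a)$ then appears site by site before the spatial limit $\Lambda_n\Rightarrow\rho$ is taken. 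With that repair your argument is sound and produces exactly the rate function in \eqref{RateF}.
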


\begin{remark}
	Proposition 2.5 in \cite{JaKu17} was originally proven for fixed $\tilde{\beta}$. Changing the result to include $\tilde{\beta}_n \rightarrow \tilde{\beta}$ follows immediately from Varadhan's lemma for a uniformly converging sequence of bounded continuous functions.
\end{remark}

\subsection{Minimizers of the rate function for Ising classes and a limiting form of the single-site kernels} \label{section:Intro_minimizers_and_limiting_kernels}

A more careful analysis of the rate function for the Ising classes gives an extension of the representation of the limiting kernels of Theorem 2.7 in \cite{JaKu17}. For any $\phi = \alpha \tilde{\rho}\lambda$, we first re-express $\alpha$ in terms of the local magnetization
\begin{equation*}
m^\alpha(u) := \alpha[1](u) - \alpha[2](u).
\end{equation*}

\begin{proposition} \label{proposition:minimizers_I_and_convergence}
	Consider the rate function in Proposition \ref{proposition:DiLDP} for an Ising class with $\Lambda_n \Rightarrow \rho \neq 0$. Denote $P_n := \mu_{\Lambda_n,\tilde{\beta}_n,2}\circ(\pi_{\Lambda_n})^{-1}$. Then the following two situations can occur.
	\begin{enumerate}[(a)]
		\item There is exactly one global minimizer $\phi^* = \phi^*(\tilde{\rho},\tilde{\beta})$ for $I_{\tilde{\rho},\tilde{\beta}}$, which is the spatially homogeneous equi-distribution, i.e. corresponding to local magnetization profile $m_{\tilde{\rho},\tilde{\beta}} = 0$. We have $P_n \rightarrow \delta_{\phi^*}$.
		\item There are exactly two global minimizers $\phi^{*,+} = \phi^{*,+}(\tilde\rho,\tilde{\beta})$ and $\phi^{*,-} = \phi^{*,-}(\tilde\rho,\tilde{\beta})$ for $I_{\tilde{\rho},\tilde{\beta}}$. Let $m_{\tilde{\rho},\tilde{\beta}}$ denote the local magnetization of $\phi^{*,+}$. Then $m_{\tilde{\rho},\tilde{\beta}}$ is everywhere positive, and the local magnetization of $\phi^{*,-}$ equals $- m_{\tilde{\rho},\tilde{\beta}}$. We have $P_n \rightarrow \frac{1}{2}(\delta_{\phi^{*,+}} + \delta_{\phi^{*,-}})$.
	\end{enumerate} 
\end{proposition}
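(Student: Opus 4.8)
The plan is to analyze the variational problem for $I_{\tilde\rho,\tilde\beta}$ on Ising classes directly in terms of the magnetization profile $m = m^\alpha$, where $\alpha[1] = (1+m)/2$ and $\alpha[2] = (1-m)/2$ with $\|m\|_\infty \le 1$. Substituting into \eqref{RateF}, one gets (up to an additive constant independent of $m$) a functional of the form
\begin{equation*}
\cG_{\tilde\rho,\tilde\beta}(m) = -\frac{\tilde\beta}{2}\ip{J \ast (\tilde\rho m)}{\tilde\rho m} + \int_{\bT^d} \tilde\rho(u)\, \iota\!\left(\tfrac{1+m(u)}{2}\right) \lambda(\dd u),
\end{equation*}
where $\iota(x) = x\log x + (1-x)\log(1-x)$ is the (symmetric, strictly convex) binary entropy, so that $u \mapsto \tilde\rho(u)\iota((1+m(u))/2)$ is an even, strictly convex function of $m(u)$. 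The two terms have opposite convexity behaviour: the entropy term is strictly convex and minimized at $m\equiv 0$, while the interaction term $-\tilde\beta\langle J\ast(\tilde\rho m),\tilde\rho m\rangle/2$ is concave (since $J>0$ is a positive-type kernel after accounting for the weight $\tilde\rho$, or at least can be handled via its Fourier/spectral structure) and pushes towards large $|m|$. First I would establish existence of minimizers by lower semicontinuity of $\cG_{\tilde\rho,\tilde\beta}$ in an appropriate topology (weak-$*$ in $L^\infty$, using compactness of the constraint set $\|m\|_\infty\le 1$ together with the fact that the interaction term is weak-$*$ continuous because $J\ast$ is a compact-type operator, and the entropy term is weak-$*$ lower semicontinuous by convexity). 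Then I would derive the Euler–Lagrange equation: any minimizer satisfies the fixed-point relation $m(u) = \tanh\!\big(\tilde\beta (J\ast(\tilde\rho m))(u)\big)$ pointwise $\lambda$-a.e., by taking variations $m + t h$ with $h \in L^\infty$ (interior variations are legitimate because $\tanh$ maps into $(-1,1)$, so the constraint is never active at a minimizer where $J\ast(\tilde\rho m)$ is bounded).

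The core is the dichotomy and uniqueness-up-to-sign. Here I would argue as follows. If $m$ is a minimizer, then so is $-m$ by the $m \mapsto -m$ symmetry. The symmetric solution $m\equiv 0$ is always a critical point. Whether it is the minimizer, or whether there is a nonzero one, is governed by the second variation at $0$, i.e. by whether the operator $h \mapsto \tilde\beta\, \tilde\rho\,(J\ast(\tilde\rho h))$ has an eigenvalue exceeding $1$. To get that there are \emph{exactly} two nonzero minimizers when $m\equiv 0$ fails to be the minimizer, and that the nonzero minimizer can be taken strictly positive everywhere, I would run a monotone-iteration/Perron–Frobenius argument on the fixed-point map $\Phi(m)(u) = \tanh(\tilde\beta(J\ast(\tilde\rho m))(u))$: because $J>0$ strictly, $\Phi$ is a strictly monotone (order-preserving) operator that maps the cone of nonnegative profiles into the cone of profiles that are strictly positive wherever $\tilde\rho$ is not degenerate; iterating $\Phi$ from the constant profile $1$ (or from the linearized leading eigenfunction scaled small) produces a monotone sequence converging to the maximal fixed point $m^+ > 0$, and strict positivity follows from the strict positivity of the kernel $J$ and $\tilde\rho\neq 0$. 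Convexity-type arguments — more precisely, the fact that on the ray $t \mapsto \cG_{\tilde\rho,\tilde\beta}(t m^+)$ for $t\in[0,1]$ the function has at most the structure of a double well (the competition of a concave and a strictly convex even term on a line has at most one positive critical point and it is a minimum when it exists) — then give that among nonnegative profiles $m^+$ is the unique minimizer, hence globally the minimizers are exactly $\{m^+, -m^+\}$ when $m\equiv 0$ is not the global minimizer, and exactly $\{0\}$ otherwise. I would also record that $m^+$ is bounded away from $0$ uniformly, since it satisfies $m^+ = \tanh(\tilde\beta J\ast(\tilde\rho m^+)) $ with the right-hand side strictly positive and continuous on the compact torus.

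Finally, for the convergence $P_n \to \delta_{\phi^*}$ (case (a)) and $P_n \to \tfrac12(\delta_{\phi^{*,+}} + \delta_{\phi^{*,-}})$ (case (b)), I would invoke the diluted LDP of Proposition \ref{proposition:DiLDP}: the LDP with a good rate function whose zero set is precisely the (finite) set of global minimizers forces $P_n$ to concentrate on that zero set, and the $\bZ_2$-symmetry of the model $\mu_{\Lambda_n,\tilde\beta_n,2}$ under swapping the two colours gives the equal weights $\tfrac12$ in case (b). This last step is essentially the content of the appendix the paper announces ("convergence of random variables in the setting where an LDP holds with a finite number of minimizers"), so I would cite that; the symmetry argument for the weights is immediate from the fact that $P_n$ is invariant under the pushforward of the colour-swap, which exchanges $\phi^{*,+}$ and $\phi^{*,-}$ and fixes the measure.

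I expect the main obstacle to be the uniqueness part in case (b): showing that there are \emph{exactly} two nonzero global minimizers, not merely that the positive one is unique among nonnegative profiles. A minimizer need not a priori be sign-definite, so one must rule out sign-changing minimizers. I would handle this by noting that if $m$ is any minimizer then $|m|$ satisfies $\cG_{\tilde\rho,\tilde\beta}(|m|) \le \cG_{\tilde\rho,\tilde\beta}(m)$ — because the entropy term is unchanged under $m \mapsto |m|$ (evenness) while the interaction term can only increase in absolute value, $\langle J\ast(\tilde\rho|m|),\tilde\rho|m|\rangle \ge \langle J\ast(\tilde\rho m),\tilde\rho m\rangle$ by positivity of $J$ — so $|m|$ is also a minimizer, hence by the Euler–Lagrange equation $|m| = \tanh(\tilde\beta J\ast(\tilde\rho|m|)) > 0$ everywhere, which forces $m$ itself to be everywhere nonzero, i.e. sign-definite; combined with the uniqueness of the positive minimizer this closes the argument. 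The other delicate point is justifying that the constraint $\|m\|_\infty \le 1$ is inactive at minimizers so that the Euler–Lagrange equation holds with equality and interior variations are allowed — but this is automatic since any critical profile satisfies $m = \tanh(\cdots)$, landing strictly inside $(-1,1)$.
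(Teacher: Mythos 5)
Your overall architecture matches the paper's: reparametrize in the magnetization $m$, derive the stationarity equation $m=\tanh(\tilde\beta\, J\ast(\tilde\rho m))$, reduce to nonnegative profiles via the $|m|$-comparison, prove uniqueness of the nonnegative minimizer, and then combine the LDP concentration on minimizers with the colour-swap symmetry to get the weights $\tfrac12$. The last step (concentration plus symmetry) is exactly the paper's argument and is fine. However, the central step --- that there is \emph{exactly one} nonnegative global minimizer --- is not established by what you propose. A monotone iteration of $\Phi(m)=\tanh(\tilde\beta J\ast(\tilde\rho m))$ from the constant profile $1$ produces the \emph{maximal} fixed point, but the stationarity equation generally has several nonnegative solutions (already in the homogeneous Curie--Weiss case), and maximality says nothing about which fixed points are global minimizers. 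Your ``double well on the ray $t\mapsto \cG(tm^+)$'' argument only controls profiles that are scalar multiples of $m^+$; a second nonnegative minimizer need not lie on that ray. The paper's proof here is genuinely different and is the analytical heart of the result: it shows (i) that for two distinct nonnegative solutions of the one-sided stationarity \emph{inequality} with $m_1\le m_2$ a.e.\ one has $I_{\tilde\rho,\tilde\beta}(m_2)<I_{\tilde\rho,\tilde\beta}(m_1)$, using the strict convexity of $I'=\arctanh$ on $[0,\infty)$ along the linear interpolation, and (ii) that the pointwise maximum $m_1\vee m_2$ of two minimizers again satisfies the stationarity inequality, whence $I_{\tilde\rho,\tilde\beta}(m_1\vee m_2)<I_{\tilde\rho,\tilde\beta}(m_1)\wedge I_{\tilde\rho,\tilde\beta}(m_2)$, a contradiction. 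Some such lattice-plus-convexity mechanism is needed; your sketch does not supply a substitute.

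Two further, smaller gaps. First, ruling out sign-changing minimizers: from ``$|m|$ is a minimizer and hence $|m|>0$ a.e.'' you conclude $m$ is sign-definite, but an a.e.\ nonvanishing measurable profile can perfectly well take both signs on sets of positive measure (and then $m$, $|m|$, $-|m|$ would be three distinct minimizers). The correct route, which you almost state, is that the interaction inequality $\ip{J\ast(\tilde\rho|m|)}{\tilde\rho|m|}\ge\ip{J\ast(\tilde\rho m)}{\tilde\rho m}$ is \emph{strict} when both $\{m>0\}$ and $\{m<0\}$ have positive $\tilde\rho\lambda$-measure (by pointwise positivity of $J$), so a sign-changing $m$ cannot be a global minimizer. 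Second, your justification that the constraint $\|m\|_\infty\le1$ is inactive is circular: you invoke the Euler--Lagrange equation to legitimize the interior variations needed to derive it. The paper instead shows directly that truncating any profile with $\esssup|m|=1$ at the level $c=\sup_u m_{\mathrm{loc}}(u)<1$ strictly lowers both the local and the gradient part of the rate function. Also note that your parenthetical claim that the interaction term is concave requires $J$ to be positive definite, which is not assumed (only $J>0$ pointwise); the paper never uses such concavity.
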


Thus, for classes of size $2$, and $\rho \neq 0$, we have a well defined local magnetization $m_{\tilde{\rho},\tilde{\beta}}$. For a class of size $2$ and $\rho =0$, set $m_{\tilde{\rho},\tilde{\beta}} = 0$. Define for each $r \geq 2$ the function
\begin{equation*}
D_{r} : \bT^d \times P_r(B) \times \bR^+ \rightarrow [1,\infty), \quad 	D_{r}(u,\tilde{\rho},\tilde{\beta}) := \bONE_{\{r \neq 2\}}+\bONE_{\{{r=2}\}} \frac{1}{\sqrt{1-m_{\tilde{\rho},\tilde{\beta}}(u)^2}}.
\end{equation*}

\begin{theorem} \label{theorem:form_of_kernels_in_Gibbs_case}
	Consider the setting of Theorem \ref{theorem:fuzz_kac_potts_gng}. Let $\nu_n \rightarrow \nu$ and $u_n \rightarrow u$.  Suppose the parameters are in setting (a) or (b). Then the limiting conditioning kernel is given by
	\begin{equation*}
	\lim_{n \rightarrow \infty}{\gamma_{n,\beta,q,(r_1,\ldots,r_s)}^u(k\vert \nu_n)}= \frac{r_k \exp \left\{ 2\beta r_k^{-1}\int \dd v \, \nu[k](v)J(u-v)\right\} D_{r_k}(u,\tilde{\nu}_k,\tilde{\beta}_k)}{\sum_{i=1}^s{r_i \exp\left\{2\beta r_i^{-1}\int \dd v \, \nu[i](v)J(u-v)\right\} D_{r_i}(u,\tilde{\nu}_i,\tilde{\beta}_i)}}
	\end{equation*}
	where $\tilde{\nu}_i := \nu[i](\bT^d)^{-1} \nu[i]$ and $\tilde{\beta}_i = \nu[i](\bT^d) \beta$.
\end{theorem}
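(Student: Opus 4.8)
The plan is to identify the prelimit kernel through the exact representation of Proposition~\ref{proposition:FuzzyKernel}, to pass to the limit via Lemma~\ref{lemma:convergence_of_kernels}, and then to carry out two tasks: (i) determine the weak limits of the diluted push-forward measures $P_{\Lambda_{l,u_n}(\nu_n),\beta_{l,u_n}(\nu_n),r_l}$, and (ii) evaluate $\cA_{r_l}$ against those limits in closed form. Throughout I write $N_l := \nu[l](\bT^d)$, $\tilde{\beta}_l := N_l\beta$ and $\tilde{\nu}_l := N_l^{-1}\nu[l]$ as in the statement, and I note that $\Lambda_{l,u_n}(\nu_n) \Rightarrow \nu[l]$ and $\beta_{l,u_n}(\nu_n) \to \tilde{\beta}_l$ as $n \to \infty$, the perforation at $u_n$ contributing nothing in the limit. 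For a class $l$ with $N_l = 0$ the $l$-th weight equals $r_l$ by the replacement rule in Lemma~\ref{lemma:convergence_of_kernels}; since then $\nu[l] \equiv 0$ $\lambda$-a.e.\ and $m_{\tilde{\nu}_l,\tilde{\beta}_l} \equiv 0$ by convention, this already matches the $l$-th summand of the claimed formula, so from now on I concentrate on the classes with $N_l > 0$.

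For task (i) I would feed $\Lambda_n := \Lambda_{l,u_n}(\nu_n)$, $\tilde{\beta}_n := \beta_{l,u_n}(\nu_n)$ and $r_l$ local states into the diluted LDP of Proposition~\ref{proposition:DiLDP}: since $\Lambda_n \Rightarrow N_l\tilde{\nu}_l$ and $\tilde{\beta}_n \to \tilde{\beta}_l$, this gives an LDP for $P_{\Lambda_n,\tilde{\beta}_n,r_l}$ on the compact space $\cP(\bT^d \times \{1,\dots,r_l\})$ with rate function $I_{\tilde{\nu}_l,\tilde{\beta}_l}$ minus its infimum. In settings (a) and (b), a class with $r_l \neq 2$ has a unique minimizer, the componentwise equidistribution $\phi^{*}_l$ with $\phi^{*}_l[i] = r_l^{-1}\tilde{\nu}_l\lambda$ for all $i$: this is trivial for $r_l = 1$, and for $r_l \geq 3$ it is the subcritical situation $\tilde{\beta}_l \leq \beta < \beta_c(r_*) \leq \beta_c(r_l)$ handled in the proof of Theorem~\ref{HaKu-Kac}(a). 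An LDP with a unique minimizer on a compact space forces $P_{\Lambda_n,\tilde{\beta}_n,r_l} \to \delta_{\phi^{*}_l}$ weakly. For an Ising class $r_l = 2$ nothing remains to be done here: Proposition~\ref{proposition:minimizers_I_and_convergence} supplies the limit directly, namely $\delta_{\phi^{*}_l}$ (the equidistribution, $m_{\tilde{\nu}_l,\tilde{\beta}_l} \equiv 0$) in case (a), and the \emph{balanced} mixture $\tfrac12(\delta_{\phi^{*,+}_l} + \delta_{\phi^{*,-}_l})$ in case (b), with $\phi^{*,\pm}_l$ carrying local magnetization $\pm m_{\tilde{\nu}_l,\tilde{\beta}_l}$. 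Denoting by $P^l$ this weak limit in every case (so $\beta_l(\nu) = \tilde{\beta}_l$), the hypotheses of Lemma~\ref{lemma:convergence_of_kernels} hold and
\[
\gamma^{u_n}_{n,\beta,q,(r_1,\dots,r_s)}(k \mid \nu_n) \;\longrightarrow\; \frac{\int \cA_{r_k}(\pi,u,\tilde{\beta}_k)\,\dd P^k(\pi)}{\sum_{i=1}^s \int \cA_{r_i}(\pi,u,\tilde{\beta}_i)\,\dd P^i(\pi)},
\]
so it remains to evaluate these integrals.

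Task (ii) is a short computation based on the identity $\tilde{\beta}_l\tilde{\nu}_l = \beta\,\nu[l]$, which gives $\tilde{\beta}_l(J \ast \tilde{\nu}_l)(u) = \beta(J \ast \nu[l])(u) = \beta\int \dd v\,\nu[l](v)J(u-v)$. If $r_l \neq 2$, then $(J \ast \phi^{*}_l[i])(u) = r_l^{-1}(J \ast \tilde{\nu}_l)(u)$ for every $i$, whence $\int \cA_{r_l}(\pi,u,\tilde{\beta}_l)\,\dd\delta_{\phi^{*}_l}(\pi) = r_l\exp\bigl\{2\beta r_l^{-1}\int \dd v\,\nu[l](v)J(u-v)\bigr\}$, which is the $l$-th summand because $D_{r_l} \equiv 1$ there. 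If $r_l = 2$, set $m := m_{\tilde{\nu}_l,\tilde{\beta}_l}$, $a := (J \ast \tilde{\nu}_l)(u)$ and $b := (J \ast (\tilde{\nu}_l m))(u)$; since $\phi^{*,+}_l[1] = \tilde{\nu}_l\tfrac{1+m}{2}\lambda$ and $\phi^{*,+}_l[2] = \tilde{\nu}_l\tfrac{1-m}{2}\lambda$ we get $\cA_2(\phi^{*,+}_l,u,\tilde{\beta}_l) = e^{\tilde{\beta}_l(a+b)} + e^{\tilde{\beta}_l(a-b)} = 2e^{\tilde{\beta}_l a}\cosh(\tilde{\beta}_l b)$, and the $\pi[1] \leftrightarrow \pi[2]$ symmetry of $\cA_2$ gives the same value at $\phi^{*,-}_l$, so $\int \cA_2(\cdot,u,\tilde{\beta}_l)\,\dd P^l = 2e^{\tilde{\beta}_l a}\cosh(\tilde{\beta}_l b)$ in both subcases of task (i) (with $b = 0$ in case (a)). Now I would recall, from the Euler--Lagrange analysis behind Proposition~\ref{proposition:minimizers_I_and_convergence}, that $m$ solves the mean-field equation $m(u) = \tanh(\tilde{\beta}_l(J \ast (\tilde{\nu}_l m))(u)) = \tanh(\tilde{\beta}_l b)$ pointwise on $\bT^d$ (with $m$ extended off the support of $\tilde{\nu}_l$ by this very formula, which depends only on $m$ restricted to $\{\tilde{\nu}_l > 0\}$); since $|m(u)| < 1$ this converts $\cosh(\tilde{\beta}_l b) = \cosh(\arctanh(m(u))) = (1-m(u)^2)^{-1/2} = D_2(u,\tilde{\nu}_l,\tilde{\beta}_l)$. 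Together with $\tilde{\beta}_l a = \beta\int \dd v\,\nu[l](v)J(u-v)$ and $r_l = 2$, this is once more the $l$-th summand in the asserted closed form; substituting all contributions into the ratio displayed above yields the theorem.

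I expect the genuinely delicate point to be task (i) for the Ising classes: the fact that the limiting measure is a single Dirac mass, or else the \emph{balanced} two-point mixture with opposite, everywhere-positive magnetization profiles --- rather than some $u_n$- or $(\nu_n)$-dependent combination that would amount to new ``worse'' bad configurations --- is precisely the content of Proposition~\ref{proposition:minimizers_I_and_convergence}, and this is where the Ising gap is closed; no ingredient of the present argument can substitute for it. The remaining care is bookkeeping: verifying that $\int \cA_{r_l}\,\dd P^l$ takes the same value whether one integrates against $\phi^{*}_l$, $\phi^{*,+}_l$ or $\phi^{*,-}_l$, so that the factor $\tfrac12$ in the mixture is harmless, and that the mean-field fixed-point relation may legitimately be evaluated at the limit point $u$ itself --- which is exactly why one must work with the pointwise, everywhere-defined version of $m_{\tilde{\nu}_l,\tilde{\beta}_l}$ rather than with a mere $\tilde{\nu}_l$-a.e.\ representative.
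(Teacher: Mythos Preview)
Your proposal is correct and follows essentially the same route as the paper: reduce via Lemma~\ref{lemma:convergence_of_kernels} to identifying the weak limits $P^l$, take the equidistribution for non-Ising classes (as in \cite{JaKu17}) and invoke Proposition~\ref{proposition:minimizers_I_and_convergence} for Ising classes, then evaluate $\int \cA_{r_l}\,\dd P^l$ explicitly, using the stationarity equation \eqref{newStateq} to turn $\cosh(\tilde{\beta}_l b)$ into $(1-m(u)^2)^{-1/2}$. Your discussion of the need for a pointwise (not merely $\tilde{\nu}_l$-a.e.) version of $m$ at the limit site $u$ is a point the paper leaves implicit.
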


\section{Proofs of main results} \label{section:proofs}
	
	We start by proving our main Theorem \ref{theorem:fuzz_kac_potts_gng} using the results from Lemma's \ref{lemma:continuity_of_cA} and \ref{lemma:convergence_of_kernels} and Proposition \ref{proposition:minimizers_I_and_convergence}. We prove the two lemma's immediately afterwards.
	
	Then, in Sections \ref{section:intro_Ising_classes} to \ref{section:Ising_form_of_kernel}, we analyse of the minimizers of the rate function for Ising classes which leads to a proof of Proposition \ref{proposition:minimizers_I_and_convergence}. We conclude in Section \ref{section:Ising_convergence_of_profiles} with a proof of Theorem \ref{theorem:form_of_kernels_in_Gibbs_case}..

	\subsection{Proof of Theorem \ref{theorem:fuzz_kac_potts_gng}} \label{section:proof_main_theorem}
	
	We proceed with the proof of our main result, which is based on Lemma's \ref{lemma:continuity_of_cA} and \ref{lemma:convergence_of_kernels} and Proposition \ref{proposition:minimizers_I_and_convergence}.

	\begin{proof}[Proof of Theorem \ref{theorem:fuzz_kac_potts_gng}]
	First we prove (a), i.e. all classes have size $1$ or $2$. Let $\nu_n \rightarrow \nu$ and $u_n \rightarrow u$. For classes $l \in \{1,\dots,s\}$ and $n \geq 1$ such that $\Lambda_{l,u_n}(\nu_n) \neq \emptyset$, set $P^l_n := P_{\Lambda_{l,u_n}(\nu_n),\beta_{l,u_n}(\nu_n),r_l}$. 
		
		\smallskip
		
		To prove our result, it suffices to verify the conditions for Lemma \ref{lemma:convergence_of_kernels}. Note that we do not need to consider classes with no limiting mass. Thus, without loss of generality, we assume all fuzzy classes have non-zero limiting mass.
		
		First suppose that class $l$ has size $1$. Then it follows that the measure $P_n^l$ converge to $\tilde{\nu}_l$.  For classes of size $2$ with non-zero mass in the limit, we find by Proposition \ref{proposition:DiLDP} with $\Lambda_n = \Lambda_{l,u_n}(\nu_n)$, $\tilde\beta_n = \beta_{l,u_n}(\nu_n)$ and $\rho = \nu[l]$ that $P_n^l$ satisfies a large deviation principle with good rate-function. By Proposition \ref{proposition:minimizers_I_and_convergence} there is a measure $P^l$, independent of the sequences $\nu_n$ and $u_n$, such that $P_n^l \rightarrow P^l$. Thus Lemma \ref{lemma:convergence_of_kernels} implies the limit of the kernels exists and is independent of the sequences $\nu_n$ and $u_n$. 
%
%
%

	Cases (b) and (c) follow from the arguments of  \cite{JaKu17}, combined with the result of the present paper that even when Ising-classes are present, they do not provide a source of discontinuity. 

	\end{proof}

	\subsection{Proof of Lemma's \ref{lemma:continuity_of_cA} and \ref{lemma:convergence_of_kernels}} \label{section:proofs_main_lemmas}
	
	\begin{proof}[Proof of Lemma \ref{lemma:continuity_of_cA}]
		Fix $r \geq 1$ Recall that $\cA_r : \cP(\bT^d \times \{1,\dots,r\}) \times \bT^d \times \bR^+  \rightarrow [1,\infty)$ was defined as
		\begin{equation*}
		\cA_r(\pi,u,\beta) := \sum_{i =1}^r\exp \left\{2 \beta \left( J \ast \pi[i]\right)(u) \right\}.
		\end{equation*}
		We start with proving (a), the continuity of $\pi \mapsto \cA_r(\pi,u,\beta)$ for all $(u,\beta) \in \bT^d \times \bR^+$. Clearly, this result follows if $\pi \mapsto (J \ast \pi[i])(u)$ is continuous for all $i$. But this is immediate as $J$ is a continuous function and $\cM_+(\bT^d)$ is equipped with the weak topology.
		
		We proceed with the proof of (b). Let $(u_n,\beta_n) \rightarrow (u,\beta)$. Note that the uniform convergence, again, follows by proving that the function $f_{n,i}(\pi) := (J \ast \pi[i])(u_n)$ converges uniformly to $f_i(\pi) := (J \ast \pi[i])(u)$. This, however, follows immediately from the uniform continuity of the function $J$($\bT^d$ is compact).
	\end{proof}

\begin{proof}[Proof of Lemma \ref{lemma:convergence_of_kernels}]
	Let $\nu_n \rightarrow \nu$ and $u_n \rightarrow u$. By the representation for the kernels given in Proposition \ref{proposition:FuzzyKernel}, using that the weight of class $l$ is bounded below by $r_l$, we find that the limiting statement holds if for all classes $l \in \{1,\dots,s\}$, we have that
	\begin{equation} \label{eqn:convergence_of_kernel_integrals}
	\int \cA_{r_l}(\pi,u_n,\beta_{l,u_n}(\nu_n)) P^l_n(\dd \pi) \rightarrow \int \cA_{r_l}(\pi,u,\beta_l) P^l(\dd \pi),
	\end{equation}
	where the weights need to be replaced by $r_l$ if the set $\Lambda_{l,u_n}(\nu_n) = \emptyset$ on the left-hand side, or if $\nu[l](\bT^d)$ on the right-hand side.
	
	\smallskip
	
	Fix $l$. First, suppose that $\nu[l](\bT^d) = 0$ and hence $\Lambda_{l,u_n}(\nu_n) \Rightarrow 0$. If the set $\Lambda_{l,u_n}(\nu_n) = \emptyset$, then the weight in \eqref{Representation_Kernel} equals $r_l$. If the set is not empty, then, we need to consider the integral 
	\begin{equation*}
	\int \cA_{r_l}(\pi,u,\beta_{l,u_n}(\nu_n)) P_{\Lambda_{l,u_n}(\nu_n),\beta_{l,u_n}(\nu_n),r_l}(\dd \pi). 
	\end{equation*}
	Regardless of which case we have, we have that $\beta_{l,u_n}(\nu_n) \rightarrow 0$, and hence $\cA_{r_l}(\cdot,\beta_{l,u_n}(\nu_n),u_n) \rightarrow \cA_{r_l}(\cdot,0,u) = r_l$ uniformly by Lemma \ref{lemma:continuity_of_cA}. This implies that the weight of class $l$ has a unique limit, which equals $r_l$.
	
	Next, we assume that $\nu[l](\bT^d) > 0$. This implies that for sufficiently large $n$, the set $\Lambda_{l,u_n}(\nu_n) \neq \emptyset$. As $\beta_{l,u_n}(\nu_n) = \beta n^{-d} |\Lambda_{l,u_n}(\nu_n)| = \beta \nu_n[l](\bT^d)$, i.e. we find by the weak convergence of $\nu_n \rightarrow \nu$ that $\beta_{l,u_n}(\nu_n) \rightarrow \beta_l := \beta \nu[l](\bT^d)$. Because also $u_n \rightarrow u$, we find that the continuous functions $\cA_{r_l}(\cdot, u_n,\beta(\nu_n))$ converge uniformly to the continuous function $\cA_{r_l}(\cdot,u,\beta)$ by Lemma \ref{lemma:continuity_of_cA}. Thus, the weak convergence $P^l_n \rightarrow P^l$ establishes \eqref{eqn:convergence_of_kernel_integrals}.
\end{proof}

\subsection{Preliminaries for the Ising-class analysis} \label{section:intro_Ising_classes}

Before identifying minimizers of the rate-function of Proposition \ref{proposition:DiLDP} for Ising classes, we first reparametrize our Ising profiles in terms of the magnetization of the profile. Afterwards, we rewrite our rate function in terms of a local term, analogous to that of the Curie-Weiss model, and a global term that expresses the non-local interactions.

\smallskip

Consider $\tilde{\beta} \geq 0$ and a profile $\rho\lambda \in \cM_+(\bT^d)$ with Lebesgue density $\rho$ and assume that $N_\rho := \rho \lambda(\bT^d) > 0$. Set $\tilde{\rho} = N_\rho^{-1} \rho$ as in Proposition \ref{proposition:DiLDP}. We study the minimizers of $I_{\tilde{\rho},\tilde{\beta}}$, which is equivalent to studying the mimimizers of the rate-function $I_{\tilde{\rho},\tilde{\beta}} - \inf_{\phi} I_{\tilde{\rho},\tilde{\beta}}(\phi)$, where
\begin{equation*}
\begin{split}
I_{\tilde{\rho},\tilde{\beta}}(\phi)
=\begin{cases} 
&\int \dd u \tilde{\rho}(u) \left\{ -\tilde{\beta} \int \dd v \tilde{\rho}(v)\left(\alpha[1](u)\alpha[1](v)+\alpha[2](u)\alpha[2](v)\right)J(u-v) \right. \\
& \hspace{3cm} + \left. S(\alpha[\cdot](u)\ \vert eq)\right\} \quad \text{if } \phi =\alpha \tilde{\rho} \lambda, \alpha \in B_2 \\ 
& \infty \hspace{5.9cm} \text{else} 
\end{cases}
\end{split}
\end{equation*} 
Thus, $I_{\tilde{\rho}}(\phi)$ is expressed in terms of $\alpha \in B_2$, where $\phi = \alpha \tilde{\rho} \lambda$. First, we will rewrite the rate-function in terms of the local magnetization $m$ in the closed ball 
\begin{equation*}
\text{Ball}(L^\infty) := \left\{m \in L^\infty(\mathbb{T}^d, \lambda^d) \, \middle| \, \esssup \vert m \vert \leq 1 \right\},
\end{equation*}
defined by $m(u) := \alpha[1](u) - \alpha[2](u)$. To re-express the diluted rate-function in terms of magnetization functions, we express the quadratic term in terms of $m$:
\begin{equation*}
\alpha[1](v)\alpha[1](u)+\alpha[2](u)\alpha[2](v)=\frac{1+ m(u)m(v)}{2}.
\end{equation*}
Substituting the representation into the rate-function gives
\begin{equation*}
I_{\tilde{\rho},\tilde{\beta}}(\phi)=\int \dd u \tilde{\rho}(u)\left\{-\frac{\tilde{\beta}}{2} \int \dd v \tilde{\rho}(v)(1+ m(u)m(v))J(u-v)+I(m(u))\right\}
\end{equation*} 
where $I$ denotes the entropy-term in the Curie-Weiss rate-function
\begin{equation*}
I(x)= \begin{cases}
\frac{1+x}{2}\log(1+x)+\frac{1-x}{2}\log(1-x) &  \text{if } x \in (-1,1), \\
\log(2)  & \text{if } x \in \{-1,+1\}.
\end{cases} 
\end{equation*}
Note, that, since the final rate-function equals $I_{\tilde{\rho},\tilde{\beta}}$ plus some constant we can omit any terms in the integrand which do not depend on $m$. We will further write $I_{\tilde{\rho},\tilde{\beta}}(m)$ instead of $I_{\tilde{\rho},\tilde{\beta}}(\phi)$ if $\phi$ has a magnetization represented by $m$. We conclude that it suffices to study minimizers of the functional
\begin{equation} \label{newRatef}
I_{\tilde{\rho},\tilde{\beta}}(m) = \int \dd u \tilde{\rho}(u)\left\{-\frac{\tilde{\beta}}{2} \int \dd v \tilde{\rho}(v)m(u)m(v)J(u-v)+I(m(u))\right\}
\end{equation}
on the set $\text{Ball}(L^\infty)$. Analogous to the discussion in the case of all classes of at size of at  least three, we may rewrite the rate-function \eqref{newRatef} by
\begin{multline} \label{splitRF}
I_{\tilde\rho,\tilde{\beta}}(m)= \frac{\beta}{4} \int \dd u \tilde\rho(u)\int \dd v \tilde\rho(v)\left[m(u)-m(v)\right]^2 J(u-v) \\
+ \int \dd u\tilde\rho(u)\left[-\frac{1}{2} b_{\tilde{\beta},\tilde\rho,J}(u)m^2(u)+I(m(u))\right] 
\end{multline}
with the local inverse temperature at  site $u$ given by the convolution
\begin{equation}
b_{\tilde{\beta},\tilde\rho,J}(u):=\tilde{\beta} \int \dd v \tilde\rho(v)J(u-v).
\end{equation}
To elucidate the equivalence between the bracketed expression in the second term of $I_{\tilde{\rho}}$ and the Curie-Weiss rate-function at (site-dependent) inverse temperature $b_{\beta,\tilde{\rho},J}(u)$ and magnetization $m(u)$, we further use the notation 
\begin{equation*}
\Phi_u(m)=:-\frac{1}{2}b_{\tilde{\beta},\tilde{\rho},J}(u)m^2+I(m), \quad  u \in \mathbb{T}^d, m \in [-1,1].
\end{equation*}
We see that the rate functional expresses the competition between the local Curie-Weiss term and a global term that penalizes spatial inhomogeneity.

\smallskip

To visualize this competition, Figure \ref{figure:profiles} shows three different magnetization profiles on the one-dimensional torus at fixed interaction-function $J$, (conditioning) density $\tilde{\rho}$ and inverse temperature $\tilde{\beta}$. The profile $m_\text{loc}$ (see Lemma \eqref{lemma:minimizers_are_nonnegative}) is given as 
\begin{equation*}
m_\text{loc}(u):=\argmin_{m \in [0,1]}(\Phi_u(m)),
\end{equation*}
whereas $m_\text{flat}$ is the minimizer to the local Curie-Weiss term in the class of non-negative spatial homogeneous profiles.
The profile $m_\text{stat}$ arises from the necessary condition  of vanishing gradient at minimizing profiles to $I_{\tilde{\beta},\tilde{\rho}}$.
\begin{center}
\begin{figure}[h]
 \centering
 \includegraphics[width=0.9\textwidth]{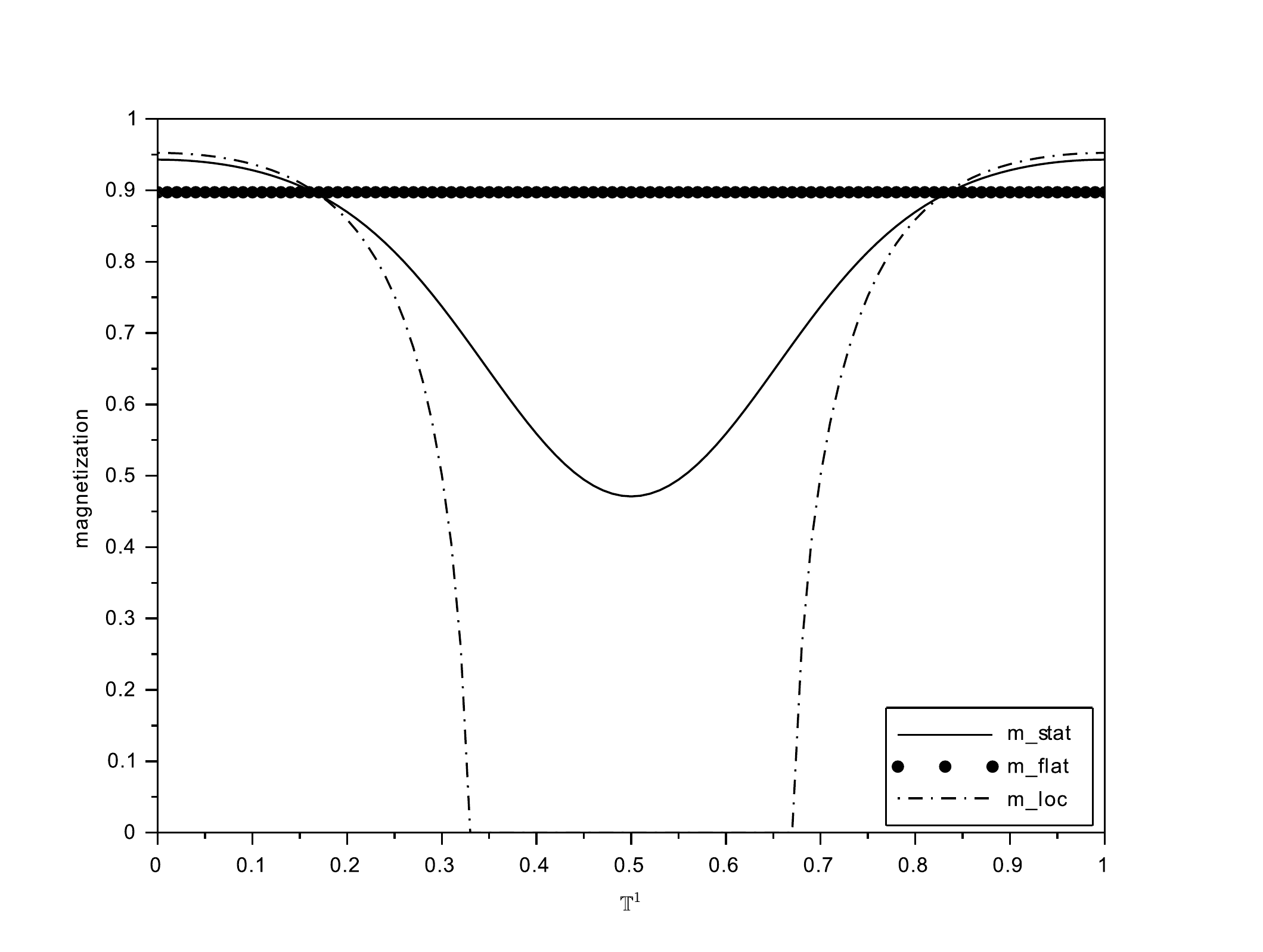}
 \caption{$\tilde{\beta} = 1.3$, $\tilde{\rho}(\cdot) = 1 +\cos(2\pi \cdot)$, $J(\cdot) = 1 + \cos(2\pi \cdot)$}
 \label{figure:profiles}
\end{figure} 
\end{center}


\subsection{Identifying the minimizers of the rate function for Ising classes}

We exploit the representation of our rate function in terms of the local magnetization $m$ to study minimizers of $I$. We first argue based on the assumption that a local minimizer exists. Based on this assumption and natural symmetry properties in the rate-function, we show that we can find a local minimizer with lower cost that is non-negative. In addition, we show, using the decomposition of $I$ into a local and global term that local minimizers are bounded away from $-1$ and $1$.

\begin{lemma} \label{lemma:minimizers_are_nonnegative}
	A local (global) minimizer $m$ of the rate function $I_{\tilde{\rho},\tilde{\beta}}$ has the following properties:
	\begin{enumerate}[(a)]
		\item By symmetry of $I_{\tilde{\rho},\tilde{\beta}}$, the profile $-m$ is also a local (global) minimizer.
		\item We can always find a non-negative profile $\tilde{m}$, such that $I_{\tilde{\rho},\tilde{\beta}}(\tilde{m}) \leq I_{\tilde{\rho}}(m)$. if both $\tilde{\rho}\lambda(\{m>0 \})>0$ and $\tilde{\rho}\lambda(\{m<0\})>0$ then the  inequality is strict.
		\item $m$ is an interior point of $\text{Ball}(L^\infty) $, i.e. $\esssup(\vert m \vert )<1$.
	\end{enumerate}
\end{lemma}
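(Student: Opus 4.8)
The plan is to treat the three items in order, since (a) and (b) are essentially symmetry/rearrangement arguments on the functional \eqref{splitRF}, while (c) exploits the singularity of the derivative of $I$ at the endpoints. For (a), I would simply observe that both summands in \eqref{splitRF} depend on $m$ only through $m(u)m(v)$ and through $I(m(u)) = I(|m(u)|)$ and $m^2(u)$; replacing $m$ by $-m$ leaves $[m(u)-m(v)]^2 J(u-v)$ unchanged (since $J$ is symmetric and the square is even in the joint sign flip) and leaves $I(m(u))$ and $m^2(u)$ unchanged. Hence $I_{\tilde\rho,\tilde\beta}(-m) = I_{\tilde\rho,\tilde\beta}(m)$, and $-m$ is a local (resp. global) minimizer whenever $m$ is, with the same neighbourhood structure (the map $m \mapsto -m$ is an isometry of $\text{Ball}(L^\infty)$).

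For (b), set $\tilde m := |m|$. The local term $\int \dd u\, \tilde\rho(u)\left[-\tfrac12 b_{\tilde\beta,\tilde\rho,J}(u) m^2(u) + I(m(u))\right]$ is unchanged since it depends only on $|m|$. For the global term I would show $[\,|m(u)| - |m(v)|\,]^2 \le [\,m(u)-m(v)\,]^2$ pointwise, which is the elementary inequality $\big||a|-|b|\big| \le |a-b|$; integrating against the non-negative kernel $\tilde\rho(u)\tilde\rho(v)J(u-v)$ gives $I_{\tilde\rho,\tilde\beta}(\tilde m) \le I_{\tilde\rho,\tilde\beta}(m)$. For strictness, note that when $\tilde\rho\lambda(\{m>0\}) > 0$ and $\tilde\rho\lambda(\{m<0\}) > 0$ there is a set of pairs $(u,v)$ of positive $\tilde\rho\otimes\tilde\rho$-measure with $m(u) > 0 > m(v)$; on a further subset where additionally $J(u-v) > 0$ (which has positive measure since $J$ is strictly positive and continuous) the inequality $\big||m(u)|-|m(v)|\big|^2 < |m(u)-m(v)|^2$ is strict, so the integral drops strictly. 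Here I use crucially that $J > 0$ everywhere, as assumed in Section \ref{section:Kac_Potts_model}. One subtlety: $\tilde m = |m|$ need not itself be a minimizer in case the original $m$ is only a local minimizer; but the statement only asserts existence of a non-negative profile with no greater cost, so this is fine.

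For (c), I would argue by contradiction using the first-order behaviour of $\Phi_u$. Fix a global (or local) minimizer $m$; by (b) we may assume $m \ge 0$, so only the endpoint $+1$ is at issue. Suppose $\lambda(\{m = 1\}) > 0$ on a set $A$ with $\tilde\rho\lambda(A) > 0$. Consider the one-sided perturbation $m_t := m - t\bONE_A$ for small $t > 0$, which stays in $\text{Ball}(L^\infty)$. The global term's contribution to $\tfrac{d}{dt}\big|_{t=0^+}$ is finite (the integrand is a quadratic polynomial in $t$ with bounded coefficients). The local term contributes $\int_A \tilde\rho(u)\,\tfrac{d}{dt}\big|_{t=0^+}\Phi_u(1-t)\,\dd u$; since $I'(x) = \tfrac12\log\frac{1+x}{1-x} \to +\infty$ as $x \uparrow 1$, we have $\tfrac{d}{dt}\big|_{t=0^+}\Phi_u(1-t) = -\Phi_u'(1^-) = +\infty$ (the $-\tfrac12 b\, x^2$ part contributes only a finite $+b$), so the derivative of $I_{\tilde\rho,\tilde\beta}(m_t)$ at $t=0^+$ is $-\infty$, contradicting minimality. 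Hence $\lambda(\{|m|=1\}) = 0$. To upgrade this to $\esssup|m| < 1$ I would also rule out the case that $|m|$ approaches $1$ on a sequence of shrinking sets: here I expect to invoke the decomposition \eqref{splitRF} together with a uniform comparison — e.g. that $\Phi_u(m) \ge \Phi_u(m_\text{loc}(u))$ forces $m(u)$ to lie in the region where $\Phi_u'$ changes sign, and that the global term is finite, to conclude $m$ takes values in a compact subinterval of $(-1,1)$ essentially uniformly.

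The main obstacle I anticipate is precisely this last point of (c): passing from "$|m|<1$ a.e." to "$\esssup |m| < 1$". The clean way is to note that any minimizer satisfies the Euler–Lagrange equation $m(u) = \tanh\!\big(b_{\tilde\beta,\tilde\rho,J}(u)\,m(u) + (\text{a bounded convolution correction})\big)$ — more precisely, stationarity of \eqref{newRatef} gives $I'(m(u)) = \tilde\beta\int \dd v\,\tilde\rho(v)m(v)J(u-v)$, whose right-hand side is bounded uniformly in $u$ by $\tilde\beta\|\tilde\rho\|_1\|J\|_\infty$; inverting $I'$ (which maps $(-1,1)$ onto $\bR$) then pins $m(u)$ into a fixed compact subinterval of $(-1,1)$ for a.e. $u$, giving the uniform bound. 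The care needed is that stationarity in the $L^\infty$ ball only gives a variational inequality at the boundary, but (c)'s earlier step has just shown the boundary is not attained, so the full Euler–Lagrange identity holds a.e., and the argument closes.
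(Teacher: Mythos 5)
Parts (a) and (b) of your proposal coincide with the paper's argument: (a) is the same symmetry observation, and your inequality $[\,|m(u)|-|m(v)|\,]^2\le[m(u)-m(v)]^2$ for the global term of \eqref{splitRF} is, after expanding the squares, exactly the paper's pointwise comparison $-|m(u)||m(v)|\le -m(u)m(v)$ with strictness on $\{m(u)>0>m(v)\}$, combined with the invariance of the entropy and diagonal terms under $m\mapsto|m|$.

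For (c) you take a genuinely different route. The paper avoids any differentiation (which in its logical order is only developed \emph{after} this lemma, precisely because Lemma \ref{lemma:derivative} needs interior points): it observes that the local mean-field solutions $m_\text{loc}(u)=\tanh(b_{\tilde{\beta},J,\tilde{\rho}}(u)m_\text{loc}(u))$ are uniformly bounded by some $c<1$ since $b_{\tilde{\beta},J,\tilde{\rho}}$ is bounded, and that the truncation $\tilde m=m\wedge c$ strictly decreases the local term of \eqref{splitRF} (as $\Phi_u$ is increasing on $[m_\text{loc}(u),1]$) while not increasing the global term (truncation is $1$-Lipschitz). This produces a strictly better competitor in one stroke and yields the uniform bound directly; your two-step argument (infinite slope of $I'$ at $\pm1$ rules out $\{|m|=1\}$ having positive mass, then the Euler--Lagrange equation pins $|m|\le\tanh(\tilde{\beta}\Vert J\Vert_\infty)$) reaches the same conclusion but at the cost of one imprecise step. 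Namely, your justification ``(c)'s earlier step has just shown the boundary is not attained, so the full Euler--Lagrange identity holds a.e.'' conflates two different things: knowing $\tilde{\rho}\lambda(\{|m|=1\})=0$ does \emph{not} make $m$ an interior point of $\text{Ball}(L^\infty)$ (take $m$ with $\esssup|m|=1$ but $|m|<1$ a.e.), so two-sided differentiability in arbitrary directions, as required for \eqref{newStateq}, is not available. The fix is standard but should be stated: test only with perturbations $\psi$ supported on $\{|m|\le 1-\delta\}$ with $\Vert\psi\Vert_\infty\le\delta/2$, for which $m+t\psi$ stays in the ball for $|t|$ small; this yields the stationarity identity $\tilde{\rho}\lambda$-a.e.\ on each such set, and letting $\delta\downarrow0$ together with your step (i) gives it $\tilde{\rho}\lambda$-a.e. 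With that repair your argument closes; note also that your reduction ``by (b) we may assume $m\ge0$'' is not literally available for local minimizers (the paper has the same cosmetic issue), but is harmless since the endpoints $+1$ and $-1$ can be treated symmetrically.
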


\begin{proof}
	(a) is immediate. For (b) consider the profile $\tilde{m} :=\vert m \vert$ given as the point-wise absolute value of $m$. The entropy functional $I(\cdot)$ is symmetric, implying
	\begin{equation*}
	\int \dd u \tilde{\rho}(u) I(\tilde{m}(u))= \int \dd u \tilde{\rho}(u)I(m(u)).
	\end{equation*}
	At any $(u,v) \in \mathbb{T}^d\times \mathbb{T}^d$ we further have 
	\begin{equation*}
	-\tilde{m}(u)\tilde{m}(v)
	\begin{cases}
	= -m(u)m(v) & \text{if } (u,v) \in \{m \geq 0\}^2 \cup \{m < 0 \}^2, \\
	< -m(u)m(v) & \text{else},
	\end{cases}
	\end{equation*}
	which leads to
	\begin{equation*}
	\begin{split}
	&\int du  \tilde{\r}(u) \left\{-\frac{\tilde{\beta}}{2}\int \dd v \tilde{\rho}(v)\tilde{m}(u)\tilde{m}(v)J(u-v) \right\} \\
	&\leq \int \dd u  \tilde{\rho}(u) \left\{-\frac{\tilde{\beta}}{2}\int dv \tilde{\rho}(v)m(u)m(v)J(u-v) \right\}
	\end{split}
	\end{equation*}
	with the inequality being strict if $\tilde{\rho}\lambda(\{m > 0\})>0$ and $\tilde{\rho}\lambda(\{m <0 \})>0$.	We conclude
	\begin{equation*}
	I_{\tilde{\rho},\tilde{\beta}}(\tilde{m}) \leq I_{\tilde{\rho},\tilde{\beta}}(m)
	\end{equation*}
	with strict inequality in the case discussed above. 
	
	\smallskip
	
	For the proof of (c) let $m\geq 0$ be any magnetization-profile with $\esssup(m)=1$. Consider the representation \eqref{splitRF} of $I_{\tilde{\rho},\tilde{\beta}}$ and define 
	\begin{equation*}
	m_\text{loc}(u):=\argmin_{m \in [0,1]}(\Phi_u(m)).
	\end{equation*}
	By continuity, the local inverse temperature $b_{\tilde{\beta},J,\tilde{\rho}}$ is bounded on $\mathbb{T}^d$, so $m_\text{loc}$, given as non-negative solution to the mean-field equation 
	\begin{equation} \label{MFeq}
	m_\text{loc}(u)=\tanh(b_{\tilde{\beta},J,\tilde{\rho}}(u)m_\text{loc}(u)),
	\end{equation}
	is bounded from above by a constant $c < 1$. Since $\esssup(m)=1$, we clearly have  $\tilde{\rho}\lambda(m>c)>0$. Thus, the profile $\tilde{m}(\cdot)=m(\cdot) \land c$ is an interior point of $\text{Ball}(L^\infty) $. By construction, the contribution of the local part of \eqref{splitRF} for $\tilde{m}$ is lower than for $m$. By a straightforward verification, the same follows for the global part, as $|\tilde{m}(u) - \tilde{m}(v)| \leq |m(u) - m(v)|$ for all $u,v$. We find $I_{\tilde{\rho},\tilde{\beta}}(\tilde{m})<I_{\tilde{\rho},\tilde{\beta}}(m)$.
\end{proof}

By (c) of Lemma \ref{lemma:minimizers_are_nonnegative}, we know that mimimizers lie in the interior of $\text{Ball}(L^\infty)$. This implies we can derive the function $I_{\tilde{\rho},\tilde{\beta}}$ to get further conditions on minimizers. We start with a technical lemma that identifies the gradient of $I_{\tilde{\rho}}$.

\begin{lemma} \label{lemma:derivative}
	For any two magnetization-profiles $m_1$ and $m_2$ and any $t \in [0,1]$ such that there is an open neighborhood $t \in U \subseteq \mathbb{R}$ with
	\begin{equation*}
		m_1+\hat{t}m_2 \in \text{Ball}(L^\infty)  \quad \text{for all } \hat{t} \in U 
	\end{equation*}
	the derivative $\frac{d}{d \tilde{t}}I_{\tilde{\rho},\tilde{\beta}}(m_1+\tilde{t}m_2)\vert_{\tilde{t}=t}$ exists and is given by
	\begin{multline*}
	\frac{\dd}{\dd \tilde{t}} I_{\tilde{\rho},\tilde{\beta}} (m_1+\tilde{t}m_2) \vert_{\tilde{t}=t} \\
	= \int  \dd  u \tilde{\rho}(u)m_2(u) \left\{ - \tilde{\beta} \int \dd v \tilde{\rho}(v)(m_1+tm_2)(v)J(u-v)+I'((m_1+tm_2)(u)) \right\}.
	\end{multline*}
\end{lemma}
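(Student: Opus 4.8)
The plan is to compute the derivative by differentiating under the integral sign, separating the quadratic (non-local) term from the entropy term, and justifying the interchange of differentiation and integration in each. First I would write $I_{\tilde{\rho},\tilde{\beta}}(m_1+\tilde{t}m_2)$ using the form \eqref{newRatef}, namely
\begin{equation*}
I_{\tilde{\rho},\tilde{\beta}}(m_1+\tilde t m_2) = -\frac{\tilde{\beta}}{2}\int\dd u\,\tilde\rho(u)\int\dd v\,\tilde\rho(v)(m_1+\tilde t m_2)(u)(m_1+\tilde t m_2)(v)J(u-v) + \int\dd u\,\tilde\rho(u)I((m_1+\tilde t m_2)(u)).
\end{equation*}
The quadratic term is a polynomial of degree two in $\tilde t$ with coefficients given by finite integrals (all integrands are bounded since $m_1,m_2,\tilde\rho \in L^\infty$, $J$ is continuous on the compact torus, and $\tilde\rho\lambda$ is a probability measure), so its derivative at $\tilde t=t$ is computed directly and, using the symmetry $J(u-v)=J(v-u)$ together with Fubini, equals $-\tilde{\beta}\int\dd u\,\tilde\rho(u)m_2(u)\int\dd v\,\tilde\rho(v)(m_1+tm_2)(v)J(u-v)$.

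For the entropy term I would apply the dominated convergence theorem to justify $\frac{\dd}{\dd\tilde t}\int\tilde\rho(u)I((m_1+\tilde t m_2)(u))\dd u = \int\tilde\rho(u)\,m_2(u)\,I'((m_1+tm_2)(u))\dd u$. The key point here is the hypothesis that $m_1+\hat t m_2 \in \text{Ball}(L^\infty)$ for all $\hat t$ in an open neighborhood $U$ of $t$: shrinking $U$ to a compact interval $[t-\delta,t+\delta]\subseteq U$, the profiles $m_1+\hat t m_2$ stay in $\text{Ball}(L^\infty)$, so by Lemma \ref{lemma:minimizers_are_nonnegative}(c)-type reasoning one does \emph{not} immediately get an interior bound — but in fact one can argue that $I'(x)=\arctanh(x)$ applied to values in $[-1,1]$ together with the difference quotient is controlled: for $\hat t \in [t-\delta,t+\delta]$ the difference quotient $\frac{I((m_1+\hat t m_2)(u))-I((m_1+tm_2)(u))}{\hat t - t}$ equals $m_2(u)I'(\xi_u)$ for some intermediate point $\xi_u$ by the mean value theorem, and since $I$ is convex, $I'$ is monotone, so $|I'(\xi_u)| \leq \max\{|I'((m_1+(t-\delta)m_2)(u))|,|I'((m_1+(t+\delta)m_2)(u))|\}$. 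Thus I need an integrable dominating function, which I obtain from the observation that $\int\tilde\rho(u)I((m_1+\hat t m_2)(u))\dd u$ is finite for the endpoints $\hat t = t\pm\delta$ (it is bounded by $\log 2$ since $|I|\leq\log 2$), but more carefully one needs $\int\tilde\rho(u)|I'(\cdot)|\dd u <\infty$; this follows because $x\mapsto xI'(x)$ is bounded on $[-1,1]$ and near the points where $|m_1+t m_2|=1$ one uses that $m_2$ itself vanishes there in the limit direction — actually the cleanest route is: the function $g(\hat t):=\int\tilde\rho(u)I((m_1+\hat t m_2)(u))\dd u$ is convex in $\hat t$ (composition of the convex $I$ with an affine map, integrated against a positive measure), finite on $U$, hence locally Lipschitz and differentiable a.e., and its one-sided derivatives are given by the stated formula via monotone convergence applied to the monotone difference quotients of the convex function $I$.

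The main obstacle is the integrability of $I'((m_1+tm_2)(u))m_2(u)$ at points where $(m_1+tm_2)(u)$ approaches $\pm 1$, since $I'(x)=\arctanh(x)\to\pm\infty$ there. I expect to handle this via the convexity argument just sketched: since $g$ is a finite convex function on the open set $U\ni t$, its derivative exists at $t$, the difference quotients converge monotonically, and monotone (resp. dominated, after the monotonicity is exploited to pass the limit inside) convergence delivers the formula — crucially, if the integral on the right-hand side were $+\infty$ or $-\infty$ this would contradict differentiability of $g$ at the interior point $t$. Once both pieces are assembled, combining the derivative of the quadratic term and the derivative of the entropy term and factoring out $\tilde\rho(u)m_2(u)$ yields exactly the claimed expression.
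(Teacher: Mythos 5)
Your treatment of the quadratic term is the same as the paper's: differentiate under the integral sign (the integrand's $\tilde t$-derivative is bounded since $m_1,m_2,\tilde\rho\in L^\infty$ and $J$ is continuous on a compact torus), then use $J(u-v)=J(v-u)$ and Fubini to merge the two resulting pieces. For the entropy term the paper simply writes ``in the same way'' and cites Klenke's Lemma 6.28 / Bogachev; you correctly flag that this is not obviously enough because $I'=\arctanh$ blows up at $\pm 1$, but the resolution is simpler and more direct than the convexity detour you take. From the hypothesis that $m_1+\hat t m_2\in\text{Ball}(L^\infty)$ for all $\hat t$ in an interval $[t-\delta,t+\delta]\subseteq U$ one gets $|(m_1+tm_2)(u)|+\delta|m_2(u)|\le 1$ for a.e.\ $u$; hence for $|\hat t - t|\le\delta/2$ the point $(m_1+\hat t m_2)(u)$ stays at distance at least $\frac{\delta}{2}|m_2(u)|$ from $\pm 1$, and since $y\mapsto y\log(1/y)$ is bounded on $(0,1]$ the integrand's derivative $m_2(u)\,I'\bigl((m_1+\hat t m_2)(u)\bigr)$ is bounded by a constant depending only on $\delta$, uniformly in $\hat t$ and $u$. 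That constant is precisely the dominating function the cited theorem asks for, so the paper's route closes once you make this bound explicit. Your convexity/monotone-convergence route also works, but note two small issues in how you phrase it: (i) the sentence ``if the integral were $\pm\infty$ this would contradict differentiability of $g$ at $t$'' is circular, since differentiability of $g$ at $t$ is exactly the claim; (ii) finiteness and convexity of $g$ on $U$ only gives one-sided derivatives at every point, not two-sided ones, so you must actually verify that the left and right limits of the difference quotients coincide (which your MCT computation does, after observing that a.e.\ $m_2(u)=0$ on $\{|m_1+tm_2|=1\}$ by the same hypothesis). Cleaned up of these two slips, your argument is correct and reaches the same formula as the paper.
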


\begin{proof}
	We have
	\begin{multline*}
	\frac{\dd}{\dd\tilde{t}}I_{\tilde\rho}(m_1+\tilde{t} m_2)\vert_{\tilde{t}=t} \\
	= - \frac{\tilde{\beta}}{2} \frac{\dd}{\dd\tilde{t}} \int \dd u \tilde{\rho}(u) \int \dd v \tilde{\rho}(v)(m_1+\tilde{t}m_2)(u)(m_1+\tilde{t}m_2)(v) J(u-v)\vert_{\tilde{t}=t} \\
	+ \frac{\dd}{\dd \tilde{t}}\int \dd u \tilde{\rho}(u)I((m_1+\tilde{t}m_2)(u)) \vert_{\tilde{t}=t}.
	\end{multline*}
We first consider the first term on the right-hand side. As the integrand in the first term is continuously differentiable in $\tilde{t}$ and that both $m_1$ and $m_2$ are essentially bounded, we can interchange integration and differentiation, cf. Lemma 6.28 in \cite[Lemma 6.28]{Kl14} or \cite[Exercise 5.8.135]{Bo07}:
	\begin{multline*}
	- \frac{\tilde{\beta}}{2}\int \dd u \tilde{\rho}(u) \int \dd v \tilde{\rho}(v)\frac{\dd}{\dd\tilde{t}} \left\{(m_1+\tilde{t}m_2)(u)(m_1+\tilde{t}m_2)(v))J(u-v)\right\} \vert_{\tilde{t}=t} \\
	 = - \frac{\tilde{\beta}}{2}\int \dd u \tilde{\rho}(u)m_2(u) \int \dd v \tilde{\rho}(v)(m_1+tm_2)(v)J(u-v) \\
	 -  \frac{\tilde{\beta}}{2}\int \dd u \tilde{\rho}(u) (m_1+tm_2)(u)\int \dd v m_2(v)J(u-v).
	\end{multline*}
	Using the Fubini-Tonelli-theorem on the second term on the right-hand side and then switching roles of $u$ and $v$ we find
	\begin{multline*}
	- \frac{\tilde{\beta}}{2}\int \dd u \tilde{\rho}(u) m_2(u) \left\{\int \dd v \tilde{\rho}(u)	(m_1+tm_2)(v)(J(u-v)+J(v-u)) \right\} \\
	= - \tilde{\beta} \int \dd u \tilde{\rho}(u) m_2(u) \left\{\int \dd v \tilde{\rho}(u) (m_1+tm_2)(v)J(u-v)\right\}.
	\end{multline*}
	In the same way we have:
	\begin{multline*}
	\frac{\dd}{\dd \tilde{t}}\int \dd u \tilde\rho(u) I((m_1+\tilde{t}m_2)(u))\vert_{\tilde{t}=t} = \int \dd u\tilde\rho(u) \frac{\dd}{\dd \tilde{t}}\{I((m_1+\tilde{t}m_2)(u))\}\vert_{\tilde{t}=t} \\
	= \int \dd u \tilde{\rho}(u)m_2(u)I'(m_1(u)+tm_2(u)),
	\end{multline*}
	concluding the proof of the lemma.
\end{proof}

By differentiation of $I_{\tilde{\rho},\tilde{\beta}}$ at a local minimizer, we immediately find the following necessary condition for minimizers.

\begin{lemma} 
	A minimizer $m$ to $I_{\tilde{\rho},\tilde{\beta}}$ satisfies the stationarity equation
	\begin{equation}\label{SCzzz}
	-\tilde{\beta} \int \dd v\tilde\rho(v)	m(v)J(u-v)+ I'(m(u)) = 0
	\qquad \text{for } \tilde{\rho}\lambda \text{-a.e. } u \in  \mathbb{T}^d,
	\end{equation}
	or equivalently,
	\begin{equation} \label{newStateq}
	m(u) = \tanh\left(\tilde{\beta} \int \dd v \tilde{\rho}(v)m(v)J(u-v)\right) \qquad \text{for } \tilde{\rho}\lambda \text{-a.e. } u \in  \mathbb{T}^d.
	\end{equation}
\end{lemma}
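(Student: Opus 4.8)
The plan is to exploit the interior regularity from Lemma~\ref{lemma:minimizers_are_nonnegative}(c) to justify first-order variations and then to invoke the fundamental lemma of the calculus of variations. Let $m$ be a minimizer and set $\kappa := \esssup|m|$, so that $\kappa < 1$ by Lemma~\ref{lemma:minimizers_are_nonnegative}(c). First I would fix an arbitrary test direction $m_2 \in L^\infty(\mathbb{T}^d,\lambda)$; since $\esssup|m + \tilde t m_2| \leq \kappa + |\tilde t|\,\esssup|m_2|$, there is an open interval $U \ni 0$ such that $m + \tilde t m_2 \in \text{Ball}(L^\infty)$ for all $\tilde t \in U$. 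Hence Lemma~\ref{lemma:derivative} applies with $m_1 = m$ and $t = 0$, so the one-variable function $\tilde t \mapsto I_{\tilde\rho,\tilde\beta}(m + \tilde t m_2)$ is differentiable at $\tilde t = 0$; since $m$ is a minimizer this function has a local minimum there, whence its derivative at $\tilde t = 0$ vanishes.

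By the formula in Lemma~\ref{lemma:derivative}, vanishing of this derivative reads
\begin{equation*}
\int \dd u\,\tilde\rho(u)\,m_2(u)\,g(u) = 0, \qquad g(u) := -\tilde\beta \int \dd v\,\tilde\rho(v)\,m(v)J(u-v) + I'(m(u)),
\end{equation*}
and this holds for every $m_2 \in L^\infty(\mathbb{T}^d,\lambda)$. The next step is to observe that $g \in L^\infty(\mathbb{T}^d,\lambda)$: the convolution term is bounded by $\esssup|m|\cdot\sup|J|$ because $\tilde\rho$ is a probability density, $m$ is essentially bounded and $J$ is continuous on the compact torus, while $I'(m(u)) = \arctanh(m(u))$ is bounded since $\kappa < 1$. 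In particular $g \in L^1(\tilde\rho\lambda)$, so taking $m_2 = \operatorname{sgn}(g) \in L^\infty(\mathbb{T}^d,\lambda)$ gives $\int \dd u\,\tilde\rho(u)\,|g(u)| = 0$, that is, $g = 0$ for $\tilde\rho\lambda$-a.e.\ $u \in \mathbb{T}^d$. This is precisely the stationarity equation \eqref{SCzzz}.

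To deduce the equivalent form \eqref{newStateq}, I would use that the Curie--Weiss entropy derivative is the inverse hyperbolic tangent: for $x \in (-1,1)$ one has $I'(x) = \tfrac12\log\tfrac{1+x}{1-x} = \arctanh(x)$, hence $\tanh(I'(x)) = x$. Since $\esssup|m| < 1$, applying $\tanh$ to both sides of the identity $I'(m(u)) = \tilde\beta\int \dd v\,\tilde\rho(v)\,m(v)J(u-v)$, valid for $\tilde\rho\lambda$-a.e.\ $u$, yields \eqref{newStateq}. I do not anticipate a genuine obstacle: the only points requiring a little care are the uniform admissibility of the perturbations (immediate from the strict bound $\kappa < 1$) and the boundedness of $g$ needed to legitimate the sign-function test (immediate from Lemma~\ref{lemma:minimizers_are_nonnegative}(c) and the continuity of $J$ on the compact torus).
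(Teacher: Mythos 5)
Your proof is correct and follows essentially the same approach as the paper: use Lemma \ref{lemma:minimizers_are_nonnegative}(c) to place $m$ in the interior of $\text{Ball}(L^\infty)$, apply Lemma \ref{lemma:derivative} at $t=0$ to get vanishing of the directional derivative in every direction, and rewrite $I'=\arctanh$. The only difference is one of detail: the paper jumps directly from ``the derivative vanishes'' to the pointwise stationarity equation, whereas you spell out the fundamental-lemma step explicitly by checking $g\in L^\infty$ (hence $g\in L^1(\tilde\rho\lambda)$) and testing against $m_2=\operatorname{sgn}(g)$, which is a useful clarification that the paper tacitly omits.
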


\begin{proof}
	Let $m$ be a local minimum and $\tilde{m}$ an arbitrary profile.
	By Lemma \ref{lemma:minimizers_are_nonnegative} we may assume $m$ to be an interior point of ${lemma:derivative}$, so, by Lemma \ref{lemma:derivative}the derivative
	\begin{equation*}
	\frac{\dd}{\dd\tilde{t}} I_{\tilde\rho,\tilde{\beta}}(m+\tilde{t} \tilde{m})\vert_{\tilde{t}=0} 
	\end{equation*}
	exists and must vanish for $m$ being a local minimum of $I_{\tilde{\rho},\tilde{\beta}}$. Rewriting $I'=\arctanh$ leads to the second expression.
\end{proof}

An immediate consequence of the stationarity equation is a further restriction of the set of possible minimizers.

\begin{lemma} \label{lemma:strictly_positive_minimizers}
	A nontrivial non-negative minimizer to $I_{\tilde{\rho},\tilde{\beta}}$ is strictly positive.
\end{lemma}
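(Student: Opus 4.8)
The plan is to feed the sign and nontriviality information into the stationarity equation \eqref{newStateq} and to exploit that the interaction kernel $J$ is strictly positive everywhere on the torus.

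First I would dispose of a degenerate case. If $\tilde{\beta}=0$, then \eqref{newStateq} reads $m(u)=\tanh(0)=0$ for $\tilde{\rho}\lambda$-a.e.\ $u$, so a nontrivial minimizer forces $\tilde{\beta}>0$; I will assume this henceforth. Since $m\geq 0$ is nontrivial, the set $A:=\{u\in\bT^d : m(u)>0\}$ has positive $\tilde{\rho}\lambda$-measure, i.e.\ $\lambda\bigl(A\cap\{\tilde{\rho}>0\}\bigr)>0$, and hence $c_0:=\int_A \tilde{\rho}(v)\,m(v)\,\dd v>0$. Because $J\in C(\bT^d)$ is strictly positive on the compact torus, $m_J:=\min_{w\in\bT^d}J(w)>0$, so for \emph{every} $u\in\bT^d$ one has
\[
\tilde{\beta}\int \dd v\,\tilde{\rho}(v)\,m(v)\,J(u-v)\;\geq\;\tilde{\beta}\,m_J\int_A \tilde{\rho}(v)\,m(v)\,\dd v\;=\;\tilde{\beta}\,m_J\,c_0\;>\;0 .
\]
Applying the strictly increasing function $\tanh$ (with $\tanh(t)>0$ for $t>0$) to \eqref{newStateq} then yields $m(u)\geq \tanh(\tilde{\beta}\,m_J\,c_0)>0$ for $\tilde{\rho}\lambda$-a.e.\ $u\in\bT^d$, which is the assertion — in fact with a uniform positive lower bound, a form that is presumably convenient later.

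I do not expect a genuine obstacle here; the only points to keep straight are that \eqref{newStateq}, and therefore the conclusion, are almost-everywhere statements with respect to the measure $\tilde{\rho}\lambda$ that actually enters the rate function (so ``strictly positive'' should be read $\tilde{\rho}\lambda$-a.e., the only meaningful sense since $\tilde{\rho}$ may vanish on part of $\bT^d$), and that ``nontrivial'' is likewise interpreted relative to $\tilde{\rho}\lambda$, i.e.\ $\tilde{\rho}\lambda(\{m>0\})>0$.
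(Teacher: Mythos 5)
Your proof is correct and takes essentially the same route as the paper, whose argument is a one-line appeal to $J>0$ together with the stationarity equation \eqref{newStateq}; you have simply supplied the details (handling $\tilde\beta=0$, bounding $J$ from below by $m_J>0$ on the compact torus) and obtained the slightly stronger conclusion of a uniform lower bound $\tanh(\tilde\beta\,m_J\,c_0)>0$, $\tilde\rho\lambda$-a.e.
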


\begin{proof}
	By assumption, $J > 0$, which implies by the stationarity equation \eqref{newStateq} that a minimizers is strictly positive.
\end{proof}

For the arguments that follow, we only need one side of the stationarity equation.

\begin{definition}
	We say that $m$ satisfies the \textit{stationarity inequality} if
	\begin{equation} \label{eqn:stationarity_inequality}
	- \tilde{\beta} \int \dd v \tilde{\rho}(v)m(v)J(u-v)+I'(m(u)) \leq 0 \qquad \text{for } \tilde{\rho}\lambda \text{-a.e. } u \in  \mathbb{T}^d,
	\end{equation}
	or equivalently,
	\begin{equation*}
	m(u) \leq \tanh\left(\tilde{\beta} \int \dd v \tilde{\rho}(v)m(v)J(u-v)\right)\quad \text{for } \tilde{\rho}\lambda \text{-a.e. } u \in  \mathbb{T}^d.
	\end{equation*}
\end{definition}

We start with a technical result based on the stationarity inequality and convexity that, afterwards, will allow us to establish the uniqueness of non-negative minimizers.

\begin{lemma} \label{lemma:convexity}
	For any two different  (i.e. $\tilde{\rho}\lambda(\{m_1 \neq m_2\})>0$) non-negative solutions $m_1$ and $m_2$ to the stationarity inequality \eqref{eqn:stationarity_inequality} we have 
	\begin{equation*}
	\tilde{\rho}\lambda(\{m_2<m_1\})=0 \quad \Rightarrow \quad I_{\tilde{\rho}}(m_2)<I_{\tilde{\rho}}(m_1). 
	\end{equation*}
\end{lemma}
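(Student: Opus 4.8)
The plan is to interpolate linearly between $m_1$ and $m_2$ and show that $I_{\tilde{\rho},\tilde{\beta}}$ is strictly decreasing along this segment. So I would set $m_t := (1-t)m_1 + t m_2$ for $t \in [0,1]$. Since $\tilde{\rho}\lambda(\{m_2 < m_1\}) = 0$ we have $0 \le m_1 \le m_2$ and hence $0 \le m_t \le m_2$ $\tilde{\rho}\lambda$-a.e.; moreover the stationarity inequality \eqref{eqn:stationarity_inequality} forces $m_2(u) \le \tanh(\tilde{\beta}\|J\|_{L^\infty}) =: c_0 < 1$ $\tilde{\rho}\lambda$-a.e. (using that $\tilde{\rho}$ is a probability density and $J>0$ is bounded). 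Thus the segment, and a slight extension of it beyond the endpoints, stays in the interior of $\text{Ball}(L^\infty)$, so Lemma \ref{lemma:derivative} applies with direction $m_2-m_1$ and gives, for every $t\in[0,1]$,
\[
\frac{\dd}{\dd t} I_{\tilde{\rho},\tilde{\beta}}(m_t) = \int \dd u \, \tilde{\rho}(u) \, (m_2-m_1)(u)\, G_t(u), \qquad G_t(u) := -\tilde{\beta}\int \dd v \, \tilde{\rho}(v) m_t(v) J(u-v) + I'(m_t(u)).
\]

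The key step is to control the sign of $G_t$. The first summand of $G_t(u)$ is \emph{affine} in $t$, while $I'(m_t(u)) = \arctanh(m_t(u))$ is a \emph{convex} function of $t$ because $\arctanh$ is convex on $[0,1)$ and $m_t(u) \in [0,c_0]$. Hence $G_t(u) \le (1-t) G^{(1)}(u) + t G^{(2)}(u)$, where $G^{(i)}(u) := -\tilde{\beta}\int \tilde{\rho}(v) m_i(v) J(u-v)\,\dd v + I'(m_i(u))$, and both $G^{(1)}$ and $G^{(2)}$ are $\le 0$ $\tilde{\rho}\lambda$-a.e. by the stationarity inequality for $m_1$ and $m_2$. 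So $G_t \le 0$ $\tilde{\rho}\lambda$-a.e. for every $t$. Furthermore $\arctanh$ is \emph{strictly} convex on $[0,1)$ (its derivative $x \mapsto (1-x^2)^{-1}$ is strictly increasing), so for $t\in(0,1)$ the convexity bound is strict at every $u$ with $m_1(u)\ne m_2(u)$, giving $G_t(u) < 0$ there.

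To conclude, note that for $t \in (0,1)$ the integrand $\tilde{\rho}(u)(m_2-m_1)(u)G_t(u)$ is $\le 0$ $\tilde{\rho}\lambda$-a.e. and is strictly negative on $\{m_1 \ne m_2\} \cap \{\tilde{\rho} > 0\}$, a set of positive Lebesgue measure because $\tilde{\rho}\lambda(\{m_1\ne m_2\}) = \tilde{\rho}\lambda(\{m_1 < m_2\}) > 0$ by hypothesis. Therefore $\frac{\dd}{\dd t} I_{\tilde{\rho},\tilde{\beta}}(m_t) < 0$ for every $t \in (0,1)$. Since $t \mapsto I_{\tilde{\rho},\tilde{\beta}}(m_t)$ is continuous on $[0,1]$ (all integrands are uniformly bounded, dominated convergence) and differentiable on $(0,1)$, the mean value theorem yields $I_{\tilde{\rho}}(m_2) - I_{\tilde{\rho}}(m_1) = \frac{\dd}{\dd t}I_{\tilde{\rho},\tilde{\beta}}(m_t)\big|_{t=\xi} < 0$ for some $\xi \in (0,1)$, which is the claim.

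The main obstacle is the realization that, although $I_{\tilde{\rho},\tilde{\beta}}$ itself is not convex, its "local part" becomes a convex function of the interpolation parameter once written through $\arctanh = I'$, and that the \emph{strict} convexity of $\arctanh$ upgrades the merely non-strict bound coming from the stationarity inequality to a strict one on the full set where $m_1$ and $m_2$ disagree; the ordering hypothesis $\tilde{\rho}\lambda(\{m_2<m_1\})=0$ then pins down the sign of the entire integrand. The remaining ingredients — that the interpolating segment stays in the interior of $\text{Ball}(L^\infty)$ so that Lemma \ref{lemma:derivative} is applicable, and the passage from a strictly negative derivative to a strict decrease — are routine.
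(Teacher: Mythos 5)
Your proof is correct and follows essentially the same route as the paper: linear interpolation between $m_1$ and $m_2$, computation of the derivative via Lemma \ref{lemma:derivative}, splitting the bracket into the affine (interaction) part plus the convexity defect of $I'=\arctanh$, and using the stationarity inequality together with strict convexity of $\arctanh$ on $[0,1)$ to get a strictly negative derivative on $(0,1)$. The only cosmetic difference is that you conclude via the mean value theorem while the paper integrates the derivative (fundamental theorem of calculus); both are equally valid.
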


\begin{proof}
	Assume, that $m_1$ and $m_2$ are two different non-negative solutions to \eqref{eqn:stationarity_inequality} where  $\tilde{\rho}\lambda(\{m_2<m_1\})=0$ (so, $\tilde{\rho}\lambda(\{m_2>m_1\})>0$). Consider the linear interpolation
	\begin{equation*}
	F:[0,1] \times \mathbb{T}^d \rightarrow [0,1] \quad ; \quad F(t,u):=(1-t)m_1(u)+tm_2(u). 
	\end{equation*}
	Then $I_{\tilde{\rho},\tilde{\beta}} F$ is a continuous, differentiable function in $t$, so at any $0\leq s \leq 1$ the Fundamental theorem of calculus gives
	\begin{equation*} 
	I_{\tilde{\rho},\tilde{\beta}}((1-s)m_1+sm_2)-I_{\tilde{\rho}}(m_1)=\int_0^s{(I_{\tilde{\rho},\tilde{\beta}}F)'(t)}dt 
	\end{equation*}
	where by Lemma \ref{lemma:derivative}
	\begin{multline}
	(I_{\tilde{\rho},\tilde{\beta}}F)'(t) \label{HomDer} \\
	= \int \dd u \tilde{\rho}(u) \left(m_2(u)-m_1(u)\right) \left\{ -\tilde{\beta} \int \dd v \tilde{\rho}(v)F(t,v)J(u-v)+I'(F(t,u)) \right\}. 
	\end{multline}

	Inserting $\pm \left((1-t)I'(m_1(u))+tI'(m_2(u))\right)$, we may restate the bracketed expression in \eqref{HomDer} in terms of the stationarity inequality:
	\begin{align*}
	& -\tilde{\beta} \int \dd v \tilde{\rho}(v)F(t,v)J(u-v) +I'(F(t,u)) \\
	& \qquad \qquad = \left(-\tilde{\beta} \int \dd v\tilde{\rho}(v) F(t,v)J(u-v)+(1-t)I'(m_1(u))+tI'(m_2(u))\right) \\
	& \qquad \quad \qquad \quad \; \,  +I'(F(t,u)) -((1-t)I'(m_1(u))+tI'(m_2(u)) \\
	& \qquad \qquad = (1-t) \left(-\tilde{\beta} \int \dd v \tilde{\rho}(v) m_1(v)J(u-v) +I'(m_1(u)) \right) 	\\
	& \qquad \qquad \qquad  \; \, + t \, \left(-\tilde{\beta} \int \dd v \tilde{\rho}(v) m_2(v))J(u-v)+I'(m_2(u)) \right) \\
	& \qquad \qquad \qquad \; \,   +I'(F(t,u)) -((1-t)I'(m_1(u))+tI'(m_2(u)). 
	\end{align*}
	Thus, using that $m_1,m_2$ satisfy the stationarity inequality \eqref{eqn:stationarity_inequality}, we find
	\begin{multline*}
	-\tilde{\beta} \int \dd v \tilde{\rho}(v)F(t,v)J(u-v) +I'(F(t,u)) \\
	 \leq  I'((1-t)m_1(u) + t m_2(u)) -((1-t)I'(m_1(u))+tI'(m_2(u)).
	\end{multline*}
	The function $I'(\cdot) = \arctanh(\cdot)$ is strictly convex on $[0,\infty)$. Using that $\tilde{\rho}\lambda(\{m_2 < m_1\}) = 0$ and $\tilde{\rho}\lambda(\{m_2 > m_1\}) > 0$, we find by \eqref{HomDer} that for all $t \in (0,1)$:
	\begin{equation*}
	(I_{\tilde{\rho},\tilde{\beta}}F)'(t) < 0
	\end{equation*}
	which establishes the claim.
\end{proof}

\begin{lemma} \label{lemma:uniqueness_of_non_neg_minimizer}
	A non-negative global minimizer to the rate-function $I_{\tilde{\rho},\tilde{\beta}}$ is unique.
\end{lemma}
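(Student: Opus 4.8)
The plan is to reduce the claim to Lemma~\ref{lemma:convexity} by feeding that lemma the pointwise maximum of two competing minimizers. So suppose $m_1$ and $m_2$ are both non-negative global minimizers of $I_{\tilde{\rho},\tilde{\beta}}$, and set $m := m_1 \vee m_2$, the $\lambda$-a.e.\ pointwise maximum. Since each $m_i$ is a minimizer, Lemma~\ref{lemma:minimizers_are_nonnegative}(c) gives $\esssup m_i < 1$, whence $\esssup m < 1$ as well; thus $m$ is a non-negative element of $\text{Ball}(L^\infty)$ bounded away from the boundary, which is exactly the regularity required to invoke Lemmas~\ref{lemma:derivative} and \ref{lemma:convexity}.

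The first genuine step is to verify that $m$ satisfies the stationarity inequality \eqref{eqn:stationarity_inequality}. The minimizers $m_1,m_2$ satisfy the stationarity equation \eqref{newStateq}; since $J>0$ and $\tilde{\rho}\ge 0$ we have $\int \dd v\,\tilde{\rho}(v)m(v)J(u-v) \ge \int \dd v\,\tilde{\rho}(v)m_i(v)J(u-v)$ for $i=1,2$, and since $\tanh$ is increasing,
\[
\tanh\!\Big(\tilde{\beta}\int \dd v\,\tilde{\rho}(v)m(v)J(u-v)\Big) \ \ge\ m_i(u) \qquad \text{for } \tilde{\rho}\lambda\text{-a.e. } u,
\]
for both $i$, hence the left-hand side is $\ge m(u)$. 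That is precisely \eqref{eqn:stationarity_inequality} for $m$.

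Next I would apply Lemma~\ref{lemma:convexity} twice. With the ordered pair $(m_1,m)$: by construction $m\ge m_1$ $\lambda$-a.e., so $\tilde{\rho}\lambda(\{m<m_1\})=0$; if $\tilde{\rho}\lambda(\{m\neq m_1\})>0$, the lemma yields $I_{\tilde{\rho},\tilde{\beta}}(m)<I_{\tilde{\rho},\tilde{\beta}}(m_1)$, contradicting that $m_1$ is a global minimizer. Hence $m=m_1$ $\tilde{\rho}\lambda$-a.e. The same argument applied to $(m_2,m)$ gives $m=m_2$ $\tilde{\rho}\lambda$-a.e., so $m_1=m_2$ $\tilde{\rho}\lambda$-a.e., which is the asserted uniqueness.

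I do not expect a serious obstacle here: Lemmas~\ref{lemma:minimizers_are_nonnegative}, \ref{lemma:derivative} and \ref{lemma:convexity} have been arranged precisely so that this lattice argument closes. The only point that needs a moment's care is the observation in the second paragraph that the pointwise maximum of two solutions of the mean-field fixed-point equation satisfies the one-sided stationarity relation \eqref{eqn:stationarity_inequality} (it is a ``supersolution''), together with the routine bookkeeping that it stays in the interior of $\text{Ball}(L^\infty)$ and that all identities are understood $\tilde{\rho}\lambda$-almost everywhere.
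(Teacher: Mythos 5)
Your proposal is correct and follows essentially the same route as the paper: both proofs show that the pointwise maximum $m_1 \vee m_2$ is a non-negative supersolution of the stationarity inequality \eqref{eqn:stationarity_inequality} (using $J>0$, $\tilde{\rho}\ge 0$ and monotonicity of $\tanh$) and then invoke Lemma \ref{lemma:convexity} to force $I_{\tilde{\rho},\tilde{\beta}}(m_1\vee m_2)$ strictly below the minimum, a contradiction. The only cosmetic difference is that you apply Lemma \ref{lemma:convexity} to each pair $(m_i, m_1\vee m_2)$ and conclude the equalities $m_i = m_1\vee m_2$, whereas the paper applies it once to whichever $m_i$ differs from the maximum; the substance is identical.
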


\begin{proof}
	Assume that there are two different non-negative global minimizers $m_1$ and $m_2$ for $I_{\tilde{\rho},\tilde{\beta}}$. Then both of them satisfy the stationarity equation \eqref{SCzzz} and, hence, the stationarity inequality \eqref{eqn:stationarity_inequality}. Since $m_1$ and $m_2$ are different, at least one of them is different to the point-wise maximum $m_1 \lor m_2$, and both $m_1$ and $m_2$ are bounded from above by $m_1 \lor m_2$. Therefore, showing, that $m_1 \lor m_2$ satisfies the stationarity inequality \eqref{eqn:stationarity_inequality}, one may apply the Lemma \ref{lemma:convexity} to get the contradiction
	\begin{equation*}
	I_{\tilde{\rho},\tilde{\beta}}(m_1 \lor m_2)<I_{\tilde{\rho},\tilde{\beta}}(m_1) \lor I_{\tilde{\rho},\tilde{\beta}}(m_2)=I_{\tilde{\rho},\tilde{\beta}}(m_1) \wedge I_{\tilde{\rho},\tilde{\beta}}(m_2).
	\end{equation*}
	By non-negativity of $J$ and $\tilde{\rho}$ and the application of the stationarity inequality \eqref{eqn:stationarity_inequality} to $m_1$ and $m_2$, we have at $\tilde{\rho}\lambda$-a.e. $u \in \mathbb{T}^d$:
	\begin{align*}
	&\tanh\left(\tilde{\beta}((\tilde{\rho}(m_1 \lor m_2))*J)(u)\right) \geq \tanh\left(\tilde{\beta}((\tilde{\rho}m_1)*J)(u)\right) \geq m_1(u),\\
	&\tanh\left(\tilde{\beta}((\tilde{\rho}(m_1 \lor m_2))*J)(u)\right) \geq \tanh\left(\tilde{\beta}((\tilde{\rho}m_2)*J)(u)\right) \geq m_2(u),
	\end{align*}
	so,
	\begin{equation*}
	\tanh\left(\tilde{\beta}((\tilde{\rho}(m_1 \lor m_2))*J)(u)\right) \geq (m_1 \lor m_2)(u)  \quad \tilde{\rho}\lambda\text{-a.s.}
	\end{equation*}
	which concludes the proof.
\end{proof}

A combination of the results above allows us to identify the minimizers of $I_{\tilde{\rho},\tilde{\beta}}$.

\begin{proposition}[characterization of the set of minimizers to $I_{\tilde{\rho}}$] \label{proposition:characterization_of_minimizers}
	The rate-function $I_{\tilde{\rho},\tilde{\beta}}$ is either uniquely minimized by the trivial profile $m \equiv 0$, or there are exactly two minimizers $m_1$ and $m_2$ where
	\begin{itemize}
		\item[a)] $m_1$ is strictly positive and
		\item[b)] $m_2=-m_1$.
	\end{itemize} 
\end{proposition}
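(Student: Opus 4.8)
The plan is to combine the structural lemmas already established. First I would invoke Lemma~\ref{lemma:minimizers_are_nonnegative}: any global minimizer $m$ of $I_{\tilde\rho,\tilde\beta}$ can be replaced by the non-negative profile $|m|$ with $I_{\tilde\rho,\tilde\beta}(|m|) \leq I_{\tilde\rho,\tilde\beta}(m)$, so a non-negative global minimizer always exists; moreover by part (c) any minimizer lies in the interior of $\text{Ball}(L^\infty)$, so the stationarity equation \eqref{newStateq} applies. By Lemma~\ref{lemma:uniqueness_of_non_neg_minimizer} there is exactly one non-negative global minimizer; call it $m_1$. There are now two cases depending on whether $m_1 \equiv 0$ in the $\tilde\rho\lambda$-a.e.\ sense.

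If $m_1 \equiv 0$, then I would argue that $m \equiv 0$ is the \emph{unique} global minimizer: for any other global minimizer $m$, the profile $|m|$ is a non-negative global minimizer, hence $|m| = m_1 = 0$ $\tilde\rho\lambda$-a.e., so $m = 0$ $\tilde\rho\lambda$-a.e.\ as well (and since only the values of $m$ on $\{\tilde\rho > 0\}$ enter $I_{\tilde\rho,\tilde\beta}$, this is the uniqueness statement we want). This gives the first alternative of the proposition.

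If $m_1 \not\equiv 0$, then by Lemma~\ref{lemma:strictly_positive_minimizers} $m_1$ is strictly positive ($\tilde\rho\lambda$-a.e.), and by Lemma~\ref{lemma:minimizers_are_nonnegative}(a) the profile $m_2 := -m_1$ is also a global minimizer; it is distinct from $m_1$ since $m_1 > 0$ on a set of positive $\tilde\rho\lambda$-measure. It remains to show these are the \emph{only} two minimizers. Let $m$ be any global minimizer. By Lemma~\ref{lemma:minimizers_are_nonnegative}(b), $|m|$ is a non-negative profile with $I_{\tilde\rho,\tilde\beta}(|m|) \leq I_{\tilde\rho,\tilde\beta}(m) = \inf I_{\tilde\rho,\tilde\beta}$, so $|m|$ is a non-negative global minimizer, hence $|m| = m_1$ $\tilde\rho\lambda$-a.e.\ by uniqueness. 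Moreover the strict-inequality clause of Lemma~\ref{lemma:minimizers_are_nonnegative}(b) forces $\tilde\rho\lambda(\{m>0\}) = 0$ or $\tilde\rho\lambda(\{m<0\}) = 0$ — otherwise $I_{\tilde\rho,\tilde\beta}(|m|) < I_{\tilde\rho,\tilde\beta}(m)$, contradicting that $m$ is a minimizer. Combined with $|m| = m_1 > 0$ $\tilde\rho\lambda$-a.e., this means either $m = m_1$ or $m = -m_1$ $\tilde\rho\lambda$-a.e.

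The step requiring the most care is the bookkeeping about ``$\tilde\rho\lambda$-a.e.\ versus everywhere'': the rate function $I_{\tilde\rho,\tilde\beta}$ only sees $m$ on $\{\tilde\rho > 0\}$, so uniqueness of minimizers should be understood modulo modification on the $\tilde\rho\lambda$-null set $\{\tilde\rho = 0\}$; I would state this convention explicitly (as is implicitly done throughout the section) so that ``exactly two minimizers'' is unambiguous. The genuinely substantive inputs — the symmetrization inequality and the convexity/lattice argument giving uniqueness of the non-negative minimizer — are already packaged in Lemmas~\ref{lemma:minimizers_are_nonnegative}, \ref{lemma:convexity} and \ref{lemma:uniqueness_of_non_neg_minimizer}, so this final proof is essentially an assembly step with no new analytic obstacle.
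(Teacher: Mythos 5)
Your proof is correct and follows essentially the same route as the paper's: combine the symmetrization step (Lemma~\ref{lemma:minimizers_are_nonnegative}), the interiority/strict-positivity step (Lemma~\ref{lemma:strictly_positive_minimizers}), and the uniqueness of the non-negative minimizer (Lemma~\ref{lemma:uniqueness_of_non_neg_minimizer}) to pin down the set of minimizers as $\{0\}$ or $\{m_1,-m_1\}$. You write out more explicitly than the paper does why an arbitrary minimizer $m$ must coincide with $\pm m_1$ (via the strict-inequality clause forcing $m$ to be of constant sign), and you treat the $m_1\equiv 0$ case separately — both are welcome clarifications, but they do not change the argument.
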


\begin{proof}
	Since $I_{\tilde{\rho},\tilde{\beta}}$ has compact level sets, there is at least one global minimizer $m$. Suppose this particular minimizer $m$ is not equal to $0$. Then by Lemma \ref{lemma:minimizers_are_nonnegative} (b), there are two global minimizers $m_1,m_2$ with $m_1 \geq 0$ and $m_1 = - m_2$ and in addition $m \in \{m_1,m_2\}$. By (c), $m_1$ is an interior point of $\text{Ball}(L^\infty)$, which implies by Lemma \ref{lemma:strictly_positive_minimizers} that $m_1$ is strictly positive.
	
	\smallskip
	
	The uniqueness of a non-negative profile was established in Lemma \ref{lemma:uniqueness_of_non_neg_minimizer}.
\end{proof}

\subsection{Convergence of profiles, proof of Proposition \ref{proposition:minimizers_I_and_convergence}} \label{section:Ising_convergence_of_profiles}

For proof of the law of Proposition \ref{proposition:minimizers_I_and_convergence}, we will critically exploit symmetry properties of the Potts model. We start with a short discussion of establishing the permutation invariance of the profiles that are obtained from the $r$-state Potts model under the maps $\pi_\Lambda$.

\smallskip

Suppose our fuzzy class has $r$ elements. Let $s$ be an element from the permutation group of $\{1,\dots,r\}$. Also denote by $s$ the map induced by $s$ on $\{1,\dots,r\}^\Lambda$ by acting coordinate-wise. In turn, we can define a map $\hat{s} : \mathcal{P}(\bT^d \times \{1,\dots,r\}) \rightarrow \mathcal{P}(\bT^d \times \{1,\dots,r\})$, by $(\hat{s}\pi)(u,i) = \pi(u,s(i))$. Recall the definition of $\pi_{\Lambda}$ in \eqref{def:pi_Lambda}. By construction, we have the following permutation property: $\pi_{\Lambda} \circ s = \hat{s}\circ \pi_{\Lambda}$. Also, by definition of the Potts model, we have $\mu_{\Lambda,\beta,r} \circ s^{-1} = \mu_{\Lambda,\beta,r}$.


Combining these statements, we obtain 
\begin{equation*}
P_{\Lambda,\beta,r} \circ \hat{s}^{-1} = \mu_{\Lambda,\beta,r} \circ \pi_{\Lambda}^{-1} \circ \hat{s}^{-1} =\mu_{\Lambda,\beta,r} \circ s^{-1} \circ \pi_{\Lambda}^{-1} = \mu_{\Lambda,\beta,r} \circ \pi_{\Lambda}^{-1} = P_{\Lambda,\beta,r},
\end{equation*}
i.e. the permutation symmetry of the Potts model carries over to that of the density profiles.

\begin{proof}[Proof of Proposition \ref{proposition:minimizers_I_and_convergence}]
	The set
	\begin{equation*}
	U := \left\{Q \in \cP(\cP(\bT^d \times \{1,2\})) \, \middle| \, \text{for all permutations } s: \, Q \circ \hat{s}^{-1} = Q  \right\}
	\end{equation*}
	is closed for the weak topology, due to the continuity of the maps $s$. It follows that any limit point of the measures $P_n$ must be in $U$ as well.
	
	By Lemma \ref{lemma:possible_limit_points_with_ldp_upper_bound}, it follows that limiting points must be concentrated on the minimizers of the large deviation rate function, which have been identified in Proposition \ref{proposition:characterization_of_minimizers}. 
	
	In both setting (a) and (b), the intersection of these two sets contains only one element, proving the result.
\end{proof}

\subsection{The limiting kernel, proof of Theorem \ref{theorem:form_of_kernels_in_Gibbs_case}} \label{section:Ising_form_of_kernel}

To establish the result of Theorem \ref{theorem:form_of_kernels_in_Gibbs_case}, it suffices by Lemma \ref{lemma:convergence_of_kernels} to find a unique limiting measure for each fuzzy class. In the proof of Theorem 2.7 in \cite{JaKu17}, it has been shown for fuzzy classes $l$ of size at least $3$ that if the profile $\nu$ is good then the limiting measure is the spatially homogeneous equi-distribution. Integrating this measure against the function $\cA_{r_l}$ yields one of the factors in the limiting kernel. Thus, it suffices to give an explicit formula for the integral of $\cA_2$ for the Ising classes.

\begin{lemma}
	Let $\beta \geq 0$ and let $\rho$ be a density profile on $\bT^d$ with $\rho \lambda(\bT^d) > 0$. Denote $\tilde{\beta} = \beta \rho\lambda(\bT^d)$ and $\tilde{\rho} = (\rho\lambda(\bT^d))^{-1} \rho$. Suppose that $\phi^{*,+}$ is the unique minimizer of $I_{\tilde{\rho},\tilde{\beta}}$ with non-negative local magnetization $m_{\tilde{\rho}}$. Set $P \in \cP(\bT^d \times \{1,2\})$ by
	\begin{equation*}
	P = \frac{1}{2}\left(\delta_{\phi^{*,+}} + \delta_{\phi^{*,-}}  \right).
	\end{equation*}
	Note that if the $I_{\tilde{\rho},\tilde{\beta}}$ has a unique global minimizer $\phi^* = \phi^{*,+} = \phi^{*,-}$ with $0$ local magnetization, then the formula for $P$ remains valid also.
	
	Then we have that
	\begin{equation*}
	\int \cA_2(\pi,u,\tilde{\beta}) P(\dd \pi) = 2\exp\left\{\tilde{\beta} \int \dd v \, \tilde{\rho}(v)J(u-v)\right\} \frac{1}{\sqrt{1-m_{\tilde{\rho}}(u)^2}}.
	\end{equation*}
\end{lemma}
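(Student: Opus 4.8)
The plan is to compute $\int \cA_2(\pi,u,\tilde\beta)\, P(\dd\pi)$ directly from the definition $\cA_2(\pi,u,\tilde\beta) = \exp\{2\tilde\beta(J\ast\pi[1])(u)\} + \exp\{2\tilde\beta(J\ast\pi[2])(u)\}$, using that $P = \tfrac12(\delta_{\phi^{*,+}} + \delta_{\phi^{*,-}})$ is a point-mass combination, so the integral is just $\tfrac12(\cA_2(\phi^{*,+},u,\tilde\beta) + \cA_2(\phi^{*,-},u,\tilde\beta))$. First I would write $\phi^{*,+} = \alpha\tilde\rho\lambda$ with $\alpha[1] = \tfrac{1+m}{2}$, $\alpha[2] = \tfrac{1-m}{2}$ where $m = m_{\tilde\rho}$; then $\phi^{*,+}[i] = \alpha[i]\tilde\rho\lambda$ and $(J\ast \phi^{*,+}[1])(u) = \int \dd v\, J(u-v)\tilde\rho(v)\tfrac{1+m(v)}{2}$, and similarly for the second component with $\tfrac{1-m(v)}{2}$. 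Since $\phi^{*,-}$ has local magnetization $-m$, its components are the swap of those of $\phi^{*,+}$, so $\cA_2(\phi^{*,-},u,\tilde\beta) = \cA_2(\phi^{*,+},u,\tilde\beta)$ and the $\tfrac12$ and the sum over the two minimizers collapse: $\int \cA_2\, \dd P = \cA_2(\phi^{*,+},u,\tilde\beta)$.

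Next I would split the exponents. Writing $b(u) := \tilde\beta\int \dd v\, \tilde\rho(v)J(u-v)$ (the local inverse temperature $b_{\tilde\beta,\tilde\rho,J}(u)$ from the text) and $h(u) := \tilde\beta\int \dd v\, \tilde\rho(v)m(v)J(u-v)$ (the local field), we get $2\tilde\beta(J\ast\phi^{*,+}[1])(u) = b(u) + h(u)$ and $2\tilde\beta(J\ast\phi^{*,+}[2])(u) = b(u) - h(u)$, hence
\begin{equation*}
\cA_2(\phi^{*,+},u,\tilde\beta) = e^{b(u)}\left(e^{h(u)} + e^{-h(u)}\right) = 2 e^{b(u)}\cosh(h(u)).
\end{equation*}
The remaining task is to show $\cosh(h(u)) = (1 - m(u)^2)^{-1/2}$. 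This is where the stationarity equation \eqref{newStateq} enters: it says exactly $m(u) = \tanh(h(u))$ for $\tilde\rho\lambda$-a.e.\ $u$. Together with the identity $\cosh(\arctanh x) = (1-x^2)^{-1/2}$, this gives $\cosh(h(u)) = (1 - m(u)^2)^{-1/2}$, and substituting $b(u) = \tilde\beta\int\dd v\,\tilde\rho(v)J(u-v)$ yields the claimed formula.

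The steps involving Fubini--Tonelli on the convolution and the elementary hyperbolic identity are routine; the one place needing a little care is the case distinction. When $\rho = 0$ (or rather when we are in situation (a) of Proposition~\ref{proposition:minimizers_I_and_convergence}, i.e.\ the unique minimizer has $m \equiv 0$), the formula is trivially consistent since $h \equiv 0$, $\cosh(h) = 1 = (1-0)^{-1/2}$, and $\phi^{*,+} = \phi^{*,-} = \phi^*$; I would note this explicitly as the text already flags it. The only genuine subtlety — and the main thing to get right — is invoking the stationarity equation in the pointwise (a.e.) form: the argument of $\cosh$ above is $h(u) = \tilde\beta\int\dd v\,\tilde\rho(v)m(v)J(u-v)$, which by \eqref{newStateq} equals $\arctanh(m(u))$ for $\tilde\rho\lambda$-a.e.\ $u$, and one should make sure the final formula is understood as holding for such $u$ (which is all that is needed, since it is integrated against $\tilde\rho\lambda$ when plugged back into the kernel representation). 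I do not anticipate any serious obstacle beyond tracking these a.e.\ qualifiers.
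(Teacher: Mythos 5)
Your proposal is correct and follows essentially the same route as the paper: evaluate $\cA_2$ at the two point masses, observe that swapping the two colour components makes the two contributions equal, factor out $e^{b(u)}$, and use the stationarity equation $m(u)=\tanh(h(u))$ together with $e^{h}+e^{-h}=2\cosh(\arctanh m)=2(1-m^2)^{-1/2}$ (the paper writes this as a sum of two square roots rather than a $\cosh$, which is the same computation).
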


\begin{proof}
	Recall, that the magnetization profiles $\pm m_{\tilde{\rho}}$ correspond
	to color profiles having densities
	\begin{equation*}
	\frac{\dd \pi[1]}{\dd \tilde{\rho}\lambda}(\cdot) =\frac{1 \pm m_{\tilde{\rho}}(\cdot)}{2} \quad \text{and} \quad 
	\frac{\dd \pi[2]}{\dd \tilde{\rho}\lambda}(\cdot) =\frac{1 \mp m_{\tilde{\rho}}(\cdot)}{2}.
	\end{equation*}
	We thus obtain
	\begin{align}
	& \int \cA_2(\pi,u,\tilde{\beta}) P(\dd \pi) \notag \\
	& \qquad = \int \sum_{i=1}^2 \exp\left\{2\tilde{\beta}(J*\pi[i])(u)\right\} \left(\frac{1}{2}\delta_{\phi^{*,+}}+\frac{1}{2}\delta_{\phi^{*,-}}\right)(\dd \pi) \notag \\
	& \qquad = 2\left(\frac{1}{2}\exp\left\{ 2 \tilde{\beta} \int \frac{1+m_{\tilde{\rho}}(v)}{2}J(u-v) \tilde{\rho}(v)\dd v\right\} \right. \notag \\
	& \hspace{3.5cm} \qquad \quad  \left. + \frac{1}{2} \exp \left\{ 2\tilde{\beta} \int \frac{1-m_{\tilde{\rho}}(v)}{2}J(u-v)\tilde{\rho}(v)\dd v \right\}\right) \notag \\
	& \qquad = \exp\left\{\tilde{\beta} \int \tilde{\rho}(v)J(u-v)\dd v\right\} \left(\exp\left\{\tilde{\beta} \int \tilde{\rho}(v) m_{\tilde{\rho}}(v)J(u-v)dv\right\} \right.  \label{eqn:kernel_integral_ising} \\
	& \hspace{3.5cm} \qquad \quad + \left. \exp\left\{-\tilde{\beta} \int \tilde{\rho}(v)m_{\tilde{\rho}}(v)J(u-v) \dd v\right\} \right). \notag
	\end{align}
	We first simplify the term involving two exponentials with $m_{\tilde{\rho}}$ factors. By application of the stationarity equation \eqref{newStateq}
	\begin{equation*}
	\tilde{\beta} \int \tilde{\rho}(v) m_{\tilde{\rho}}(v)J(u-v) \dd v = \arctanh(m_{\tilde{\rho}}(u)) = \frac{1}{2}\log\left(\frac{1+m_{\tilde{\rho}}(u)}{1-m_{\tilde{\rho}}(u)}\right),
	\end{equation*}
	the braced expression further becomes
	\begin{multline*}
	\exp\left\{\tilde{\beta}\int \tilde{\rho}(v) m_{\tilde{\rho}}(v)J(u-v)\dd v\right\}+\exp\left\{-\tilde{\beta} \int \tilde{\rho}(v)m_{\tilde{\rho}}(v)J(u-v) \dd v\right\} \\
	 = \sqrt{\frac{1+m_{\tilde{\rho}}(u)}{1-m_{\tilde{\rho}}(u)}}+\sqrt{\frac{1-m_{\tilde{\rho}}(u)}{1+m_{\tilde{\rho}}(u)}} = \frac{2}{\sqrt{1-m_{\tilde{\rho}}(u)^2}}. 
	\end{multline*}
	Inserting $\tilde{\beta}=\beta\rho\lambda(\mathbb{T}^d)$ and $\tilde{\rho}=\frac{\rho}{\rho\lambda(\mathbb{T}^d)}$ into the first factor of \eqref{eqn:kernel_integral_ising}, we get the result
	\begin{equation*}
	\int \cA_2(\pi,u,\tilde{\beta}) P(\dd \pi)
	=2\exp\left\{\beta \int \dd v \rho_k(v)J(u-v)\right\} \frac{1}{\sqrt{1-m_{\tilde{\rho}}(u)^2}}.
	\end{equation*}
\end{proof}

\appendix

\section{Absolute continuity of limiting measures} \label{appendix:absolute_continuity}

We start by establishing the result of Lemma \ref{lemma:possible_limits_of_profiles}.

\begin{lemma} 
	For each $n$ let $\Lambda_n \subseteq \bT^d_n$. Set 
	\begin{equation*}
	\nu_n := \frac{1}{|\bT^n_d|} \sum_{i \in \Lambda_n} \delta_{i}.
	\end{equation*}
	Then any weak limit point $\nu$ of the sequence $\nu_n$ can be considered as an element in the unit ball of $L^\infty(\bT^d, \dd x)$.
\end{lemma}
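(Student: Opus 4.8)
The plan is to show that any weak limit point $\nu$ of the sequence $\nu_n := |\bT^d_n|^{-1}\sum_{i\in\Lambda_n}\delta_i$ is absolutely continuous with respect to Lebesgue measure on $\bT^d$, with Radon--Nikodym density bounded by $1$. The key observation is that $\nu_n$ is, up to the normalization, a counting measure on the grid $\bT^d_n$, and the grid points are $n^{-d}$-spaced, so the "mass per unit volume" of $\nu_n$ is uniformly bounded: each grid point carries mass $1/|\bT^d_n| = n^{-d}$ and there are $n^d$ such points in a unit-volume cube.

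First I would fix a continuity point scale and test $\nu_n$ against indicator-like functions of small boxes. Concretely, for any (half-open) dyadic-type box $Q \subseteq \bT^d$ of side length a multiple of $1/n$, the grid $\bT^d_n$ contains at most $\lceil n\,\mathrm{side}(Q)\rceil^d$ points of $\Lambda_n$ inside $Q$, hence $\nu_n(Q) \leq n^{-d}\lceil n\,\mathrm{side}(Q)\rceil^d$. Taking $n\to\infty$ along the subsequence realizing the weak limit, and using that $\nu(\partial Q) = 0$ for Lebesgue-a.e. translate of $Q$ (or by a standard approximation with open and closed sets), one gets $\nu(Q) \leq \lambda(Q)$ for all such boxes $Q$. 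Since finite unions of such boxes generate the Borel $\sigma$-algebra of $\bT^d$ and $\nu$, $\lambda$ are both finite Borel measures, the inequality $\nu(A)\leq \lambda(A)$ extends to all Borel sets $A$ by the usual monotone class / outer regularity argument. In particular $\nu \ll \lambda$, and by the Radon--Nikodym theorem $\nu = f\lambda$ with $0 \leq f$ and $\int_A f\,\dd\lambda \leq \lambda(A)$ for all $A$, which forces $f \leq 1$ $\lambda$-a.e., i.e. $f$ lies in the unit ball of $L^\infty(\bT^d,\dd x)$.

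I would be slightly careful about the boundary-of-box issue: a clean way to avoid it is to first pass to a weakly convergent subsequence (possible since $\bT^d$ is compact, so $\cP(\bT^d)$ or $\cM_+(\bT^d)$ with total mass $\leq 1$ is weakly compact), then use the portmanteau theorem in the form $\nu(U) \leq \liminf_n \nu_n(U)$ is false in general but $\nu(U) \leq \liminf_n \nu_n(U)$ does hold for open $U$ — actually I want the reverse, so I use $\limsup_n \nu_n(C) \leq \nu(C)$ for closed $C$. Applying this with $C = \overline{Q}$ a closed box and bounding $\nu_n(\overline Q)$ from above by the grid-point count as above gives $\nu(\overline Q) \leq \lambda(\overline Q) = \lambda(Q)$; shrinking closed boxes to recover arbitrary open sets and then general Borel sets via regularity completes the argument. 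The main obstacle is really just bookkeeping: making the grid-point counting bound uniform in $n$ and translating between the box-wise inequality and the measure-theoretic statement $\dd\nu/\dd\lambda \leq 1$; there is no deep analytic content, only the elementary fact that an $(1/n)$-spaced point set, each point weighted by $n^{-d}$, has density at most $1$ in the limit.

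A compact way to organize the write-up: (i) pass to a convergent subsequence by compactness; (ii) establish $\nu(Q) \leq \lambda(Q)$ for all half-open cubes $Q$ with rational corners by the counting bound and portmanteau; (iii) deduce $\nu \ll \lambda$ with density in $[0,1]$ by a standard approximation/Radon--Nikodym argument; (iv) conclude. Since the statement only asserts the existence of such a representative for \emph{any} weak limit point, it suffices to run this for an arbitrary limit point, which the subsequence in (i) provides.
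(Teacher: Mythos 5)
Your approach — establish $\nu(Q)\le\lambda(Q)$ on boxes by counting lattice points, then extend to all Borel sets to get a Radon--Nikodym density at most $1$ — is a genuinely different route from the paper. The paper instead works by duality: it shows $|\langle f,\nu\rangle|\le\|f\|_{L^1(\bT^d,\dd x)}$ for continuous $f$ (via the observation that $\langle f,\nu_n\rangle$ is a Riemann sum for $\int f\,\dd x$, up to an error controlled by uniform continuity), extends to all of $L^1$ by density, and then reads off a density in the unit ball of $L^\infty$ from $L^1$--$L^\infty$ duality. Both arguments rest on the same counting fact (mass $n^{-d}$ per grid cell of volume $n^{-d}$); yours is the ``primal'' measure-theoretic version, the paper's is the ``dual'' functional-analytic version. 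Either buys the result; the paper's phrasing is slightly more awkward in a first step (it tries to argue $\nu\in L^1$ by pairing against $L^\infty$, which is circular as written), so your route is arguably cleaner.

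However, there is a genuine sign error in your portmanteau step. You first (correctly) note that $\nu(U)\le\liminf_n\nu_n(U)$ for open $U$, then second-guess yourself (``actually I want the reverse'') and switch to the closed-set bound $\limsup_n\nu_n(C)\le\nu(C)$, claiming this combined with $\nu_n(\overline Q)\le\lambda(\overline Q)+o(1)$ gives $\nu(\overline Q)\le\lambda(\overline Q)$. It does not: both inequalities give upper bounds on $\limsup_n\nu_n(\overline Q)$ and say nothing about how $\nu(\overline Q)$ and $\lambda(\overline Q)$ compare. The fix is to discard the closed-set version and use the open-set inequality you started with: for an open box $Q$, the lattice-point count gives $\nu_n(Q)\le(\mathrm{side}(Q)+n^{-1})^d\to\lambda(Q)$, and then $\nu(Q)\le\liminf_n\nu_n(Q)\le\lambda(Q)$. (This portmanteau inequality remains valid for weak convergence in $\cM_+(\bT^d)$ of sub-probability measures, which is the setting here since $\nu_n(\bT^d)=|\Lambda_n|/|\bT^d_n|\le1$.) After that, extending to all Borel sets by covering open sets with countably many disjoint dyadic boxes and then using outer regularity, and concluding $\dd\nu/\dd\lambda\le1$ a.e., is standard as you indicate. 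So the plan is right, the key estimate is right, but the specific portmanteau direction you commit to is the wrong one.
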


\begin{proof}
	Let $\nu$ be a weak limit point of the sequence $\nu_n$. Without loss of generality, we can assume $\nu_n \rightarrow \nu$ weakly. 
	
	First, we prove that $\nu \in L^1(\bT^d,\dd x)$. As the set $L^1(\bT^d, \dd x)$ can be interpreted as a subset of $\cM(\bT^d)$, it suffices to show that for any element $f \in L^\infty(\bT^d,\dd x)$ we have that $\ip{f}{\nu} \leq  \vn{f}_{L^\infty(\bT^d,\dd x)}$ as this implies $\vn{\nu}_{L^1(\bT^d,\dd x)} \leq 1$. 
	
	Fix $\varepsilon > 0$. By Lusin's theorem, cf. \cite[Theorem 7.1.13]{Bo07}, there is a continuous function $f_\varepsilon$ with $\nu(f \neq f_\varepsilon) \leq \varepsilon$ and $\vn{f_\varepsilon} \leq \vn{f}$. Thus
	\begin{equation*}
	\left| \int f \dd \nu \right| \leq 2 \vn{f} \varepsilon + \left| \int f_\varepsilon \dd \nu \right|.
	\end{equation*}
	The right hand side is approximated by $\ip{f_\varepsilon}{\nu_n}$, which by assumption is bounded by $\vn{f}$ uniformly in $n$. Sending $\varepsilon \downarrow 0$, we find 
	\begin{equation*}
	\left| \int f \dd \nu \right| \leq \vn{f}
	\end{equation*}
	establishing that $\nu \in L^1(\bT^d,\dd x)$. 
	
	\smallskip
	
	Denote by $g$ the density of $\nu$ with respect to the Lebesgue measure. We prove it is bounded by using the same trick as above, but by paring with functions $f \in L^1(\bT^d,\dd x)$. First, recall that the space of bounded continuous functions is dense in $L^1(\bT^d,\nu)$, thus, we can approximate $f$ in $L^1$ by continuous bounded functions $f_k$. We find
	\begin{equation*}
	\left|\int fg \dd x\right| = \lim_k | \int f_k g  \dd x |= \lim_k \lim_n \ip{f_k}{\nu_n}.
	\end{equation*}
	For each fixed $k$, the function $f_k$ is uniformly continuous. As the measures $\nu_n$ are constructed by putting Dirac-masses at a grid with distance $n^{-1}$ between neighboring points, we can bound the integrals $|\ip{f_k}{\nu_n}| \leq |\int f_k \dd x| + \delta_k(n)$, where $\lim_n \delta_k(n) = 0$. It follows that $|\int fg \dd x | \leq \vn{f}_{L^1(\bT^d,\dd x)}$ implying that $g \in L^\infty(\bT^d,\dd x)$ and $\vn{g}_{L^\infty(\bT^d,\dd x)} \leq 1$.
	
\end{proof}

\section{Continuity of limiting functionals} \label{appendix:continuity_limiting_functionals}

The following lemma is an adaptation from Lemma 2.1 in \cite{HRZ15}, taking into account that for each $n$, we consider a different space.

\begin{lemma} \label{lemma:continuity_of_kernel_abstract}
	Let $\{\cX_n\}_{n \geq 1}$ be a sequence of spaces and let $(\cX,d_\cX)$ and $(\cY,d_\cY)$ be two metric spaces. Suppose there are maps $\eta_n : \cX_n \rightarrow \cX$ in such a way that for every $x \in \cX$ there are $x_n \in \cX_n$ with $\eta_n(x_n) \rightarrow x$.
	
	Let $f_n : \cX_n \rightarrow \cY$, $f : \cX \rightarrow \cY$ and suppose that for all sequences $x_n \in \cX_n$ with $\eta_n(x_n) \rightarrow x$, we have $f_n(x_n) \rightarrow f(x)$. Then $f$ is continuous.
\end{lemma}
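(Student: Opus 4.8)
The final statement to prove is Lemma~\ref{lemma:continuity_of_kernel_abstract}, an abstract continuity principle for maps defined on a sequence of different spaces. Let me sketch the proof.

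=== PROOF PROPOSAL ===

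\textbf{Approach.} The plan is a direct $\varepsilon$--$\delta$ (or sequential) argument by contradiction, exploiting the hypothesis that $\cX$-points are approximable by $\cX_n$-points together with a diagonal extraction. Since $(\cX,d_\cX)$ is metric, it suffices to prove sequential continuity: for every $x \in \cX$ and every sequence $y^{(k)} \to x$ in $\cX$, we want $f(y^{(k)}) \to f(x)$ in $\cY$.

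\textbf{Key steps.} First I would fix $x \in \cX$ and a sequence $y^{(k)} \to x$. For each $k$, use the approximation hypothesis: there are $x^{(k)}_n \in \cX_n$ with $\eta_n(x^{(k)}_n) \to y^{(k)}$ as $n \to \infty$. Hence for each $k$ we may pick an index $n_k$, which we can take strictly increasing in $k$, such that
\begin{equation*}
d_\cX\!\left(\eta_{n_k}(x^{(k)}_{n_k}), y^{(k)}\right) \leq \frac{1}{k}
\qquad\text{and}\qquad
d_\cY\!\left(f_{n_k}(x^{(k)}_{n_k}), f(y^{(k)})\right) \leq \frac{1}{k},
\end{equation*}
where the second inequality uses the running hypothesis applied to the constant-in-$n$ situation — more precisely, for fixed $k$ the sequence $(x^{(k)}_n)_n$ satisfies $\eta_n(x^{(k)}_n) \to y^{(k)}$, so by hypothesis $f_n(x^{(k)}_n) \to f(y^{(k)})$, and we choose $n_k$ large enough to realize both bounds. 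Now set $z_m := x^{(k)}_{n_k} \in \cX_{n_k}$ along the subsequence $m = n_k$ (and fill in the remaining indices $m \notin \{n_k\}$ arbitrarily, using the approximation property again so that $\eta_m(z_m)$ stays close to $x$; in fact it is cleanest to only define the relevant subsequence and invoke the hypothesis along it, which is legitimate since the hypothesis quantifies over all sequences). Along $m = n_k$, the triangle inequality gives $d_\cX(\eta_{n_k}(z_{n_k}), x) \leq d_\cX(\eta_{n_k}(z_{n_k}), y^{(k)}) + d_\cX(y^{(k)}, x) \to 0$, so $\eta_{n_k}(z_{n_k}) \to x$. By the main hypothesis, $f_{n_k}(z_{n_k}) \to f(x)$. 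But $d_\cY(f_{n_k}(z_{n_k}), f(y^{(k)})) \leq 1/k \to 0$, so by the triangle inequality $f(y^{(k)}) \to f(x)$, which is exactly sequential continuity of $f$ at $x$.

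\textbf{Main obstacle.} The only subtlety is bookkeeping with two indices: one must extract a single sequence $(z_m)_m$ in $\bigsqcup_m \cX_m$ whose image under $\eta$ converges to $x$ and simultaneously tracks the values $f(y^{(k)})$; the diagonal choice of $n_k$ (strictly increasing, so the $z_m$ live in distinct spaces) handles this, and the approximation hypothesis guarantees we never run out of good points in $\cX_n$. A minor point is that the hypothesis is stated for sequences indexed by all $n$, whereas I produce one indexed along $n_k$; this is harmless since any subsequence $(n_k)$ can be completed to a full sequence by using the approximation property to choose $z_m$ for the missing $m$ with $d_\cX(\eta_m(z_m), x) \leq 1/m$, and then $\eta_m(z_m) \to x$ along the whole sequence, so the hypothesis applies and in particular along the subsequence $m = n_k$ we get $f_{n_k}(z_{n_k}) \to f(x)$. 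No continuity of the $\eta_n$ or the $f_n$ is needed.
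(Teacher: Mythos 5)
Your proof is correct and follows essentially the same diagonal-extraction argument as the paper: reduce to sequential continuity, approximate each $y^{(k)}$ by a point of some $\cX_{n_k}$ that simultaneously satisfies a $1/k$-bound in $\cX$ and a $1/k$-bound in $\cY$, then let $k\to\infty$ and conclude via the main hypothesis and two triangle inequalities; the paper runs the identical double-index scheme with the labels of the two indices interchanged. You are in fact slightly more careful on one point: the hypothesis quantifies over full sequences $(x_n)_n$ with $x_n\in\cX_n$, while the diagonal extraction only produces a strictly increasing subsequence, and you explicitly note that the missing indices should be filled in using the approximation property before invoking the hypothesis — a technical detail the paper's proof passes over silently.
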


\begin{proof}
	As $\cX$ is metric, it suffices to prove for any sequence $x_n \in \cX$ converging to $x \in \cX$ that $f(x_n) \rightarrow f(x)$.
	
	For every $n$ let $x_{n,k} \in \cX_k$ be such that $\eta_k(x_{n,k}) \rightarrow x_n$. By assumption, we find $f_k(x_{n,k}) \rightarrow f(x_n)$. Using that $\cX$ and $\cY$ are metric, we can choose $k(n)$ large enough such that
	\begin{equation} \label{eqn:lemma_continuous_limit}
	d_{\cX}(\eta_{k(n)}(x_{n,k(n)}),x_n) \leq \frac{1}{n}, \qquad d_{\cY}(f_{k(n)}(x_{n,k(n)}),f(x_n)) \leq \frac{1}{n}.
	\end{equation}
	The first inequality, combined with $\eta_n(x_n) \rightarrow x$, implies that $\lim_n \eta_{k(n)}(x_{n,k(n)}) = x$. By assumption, we find that $\lim_n f_{k(n)}(x_{n,k(n)} = f(x)$. Therefore, using the second inequality in \eqref{eqn:lemma_continuous_limit} 
	\begin{equation*}
	\lim_n d_{\cY}(f(x_n)),f(x)) \leq \lim_n d_{\cY}(f(x_n),f_{k(n)}(x_{n,k(n)})) + d_{\cY}(f_{k(n)}(x_{n,k(n)}),f(x)) = 0.
	\end{equation*}
	We conclude that $f$ is continuous.
	
\end{proof}

Use for $\cX_n$ is the space of configurations at the finite $n$ level. $\cY$ is the space of single site kernels. Let $f_n$ be as in \eqref{Representation_Kernel}

The function $f$, we will define directly, based on the minimizers of the rate function. I.e. as in \cite{JaKu17} directly in the single minimum case, average over two minimizers in the Ising case.

\section{Limits when the LDP has multiple minimizers} \label{appendix:limits_LDP_more_minimizers}

The following result is folklore, for which the authors could not find a good reference. 

\begin{lemma} \label{lemma:possible_limit_points_with_ldp_upper_bound}
Let $(\mu_n)_{n \geq 1}$ be a sequence of probability-measures on a Polish space $X$ satisfying an large deviation upper bound at speed $\{r(n)\}_{n \geq 1}$ with good rate function $I$. Further assume, that the rate-function only possesses a finite number of minimizers $m_1,\ldots, m_s$. 

Then, for any weakly convergent subsequence \[ \mu_{n_k} \stackrel{k \rightarrow \infty}{\rightarrow} \mu \in  \mathcal{M}_1(X, \mathcal{B}(\mathcal{X})) \] there are $\l_1, \ldots ,\l_s \geq 0$ with $ \sum_{i=1}^s{\l_i}=1$ such that $\mu=\sum_{i=1}^s{\l_i \delta_{m_i}}$
\end{lemma}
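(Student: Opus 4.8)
The plan is to show that any such weak limit $\mu$ is concentrated on the finite set $\{m_1,\ldots,m_s\}$; once that is known, setting $\lambda_i := \mu(\{m_i\})$ immediately gives $\lambda_i \geq 0$, $\sum_{i=1}^s \lambda_i = \mu(\{m_1,\ldots,m_s\}) = 1$, and $\mu = \sum_{i=1}^s \lambda_i \delta_{m_i}$. I would first record a normalization: since $I$ is a good rate function (so $I \geq 0$) and the $\mu_n$ are probability measures, applying the upper bound to the closed set $F = X$ gives $\inf_X I \leq 0$, hence $\inf_X I = 0$, so the minimizers $m_1,\ldots,m_s$ are precisely the zeros of $I$.

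The key step is that for each $\delta > 0$ the closed set $F_\delta := \{x \in X : \operatorname{dist}(x,\{m_1,\ldots,m_s\}) \geq \delta\}$ satisfies $\inf_{F_\delta} I > 0$. This follows from goodness together with lower semicontinuity: if $\inf_{F_\delta} I = 0$, choose $x_k \in F_\delta$ with $I(x_k) \to 0$; for large $k$ these lie in the compact sublevel set $\{I \leq 1\}$, so a subsequence converges to some $x_* \in F_\delta$ (closedness), and $I(x_*) \leq \liminf_k I(x_k) = 0$ forces $x_*$ to be a minimizer at distance $\geq \delta$ from all minimizers — a contradiction. The large deviation upper bound then yields $\limsup_n r(n)^{-1}\log \mu_n(F_\delta) \leq -\inf_{F_\delta} I < 0$, and since $r(n) \to \infty$ this gives $\mu_n(F_\delta) \to 0$.

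The remaining step is to transfer this to the limit $\mu$, and here lies the one genuine subtlety I expect to be the main obstacle: one cannot simply conclude ``$\mu(F_\delta) = 0$'' from $\mu_{n_k}(F_\delta) \to 0$, because mass can concentrate onto the minimizers in the weak limit (as in $\delta_{1/n} \Rightarrow \delta_0$ with $F_\delta = \{|x| \geq \delta\}$). The fix is to pass to the open set $G_\delta := \{x : \operatorname{dist}(x,\{m_1,\ldots,m_s\}) > \delta\}$, which is strictly contained in $F_\delta$; by the portmanteau theorem for open sets, $\mu(G_\delta) \leq \liminf_k \mu_{n_k}(G_\delta) \leq \liminf_k \mu_{n_k}(F_\delta) = 0$. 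The slight loss in shrinking $F_\delta$ to $G_\delta$ is harmless because $\delta$ is sent to $0$ at the end.

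Finally I would let $\delta \downarrow 0$ along $\delta = 1/m$: the sets $G_{1/m}$ increase to $\bigcup_m G_{1/m} = X \setminus \{m_1,\ldots,m_s\}$ (since $\{m_1,\ldots,m_s\}$ is closed, $\operatorname{dist}(x,\{m_1,\ldots,m_s\}) > 0$ iff $x \notin \{m_1,\ldots,m_s\}$), so continuity of $\mu$ along this increasing union gives $\mu(X \setminus \{m_1,\ldots,m_s\}) = \lim_m \mu(G_{1/m}) = 0$. Thus $\mu$ is a probability measure supported on the finite set $\{m_1,\ldots,m_s\}$, which completes the proof as indicated above. The only parts needing care are the goodness/lower-semicontinuity argument for $\inf_{F_\delta} I > 0$ and the open-set version of portmanteau in the third step; everything else is routine.
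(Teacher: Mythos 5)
Your proof is correct, and it takes a genuinely different route from the paper's. The paper argues by contradiction: assuming $\mu$ assigns positive mass to $\{m_1,\ldots,m_s\}^c$, it invokes inner regularity of the Borel probability measure $\mu$ to produce a compact set $K\subseteq \{m_1,\ldots,m_s\}^c$ with $\mu(K)>0$, sandwiches an open $O$ with $K\subseteq O\subseteq\overline{O}\subseteq\{m_1,\ldots,m_s\}^c$, applies the LDP upper bound to $\overline{O}$ to get $\mu_n(\overline{O})\to 0$, and then derives a contradiction from the open-set portmanteau bound $\liminf_k\mu_{n_k}(O)\ge\mu(K)>0$. You instead argue directly, exhausting the complement of the minimizers by the explicit nested open sets $G_{1/m}$ and showing $\mu(G_{1/m})=0$ for each $m$ via the LDP on the closed shell $F_{1/m}$ and portmanteau; this avoids inner regularity and the ad hoc separation step, at the modest cost of having to pass from $F_\delta$ to $G_\delta$ and then take a monotone limit. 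One further point in your favor: the paper asserts $\inf_{\overline{O}}I>0$ without justification, whereas you explicitly derive $\inf_{F_\delta}I>0$ from goodness and lower semicontinuity via a compactness argument in the level set $\{I\le 1\}$ --- that step is needed in both proofs (the rate function could a priori decay to $0$ along a sequence escaping every compact set if $I$ were merely lower semicontinuous, not good), so your writeup is in this respect more complete than the paper's.
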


\begin{proof}
Assume, that $ \mu \notin \{\sum_{i=1}^s{\l_i\delta_{m_i}} \, : \, \l_1, \ldots, \l_s \geq 0 \text{ and } \sum_{i=1}^s{\l_i}=1   \}$. Then we have $\mu(\{m_1, \ldots, m_s \}^c)>0$. By inner regularity of the probability measure $\mu$ (e.g. \cite[Theorem 7.1.7]{Bo07}) this still holds true for some compact subset $K \subseteq \{m_1, \ldots, m_s \}^c$. Moreover, employing the fact, that $X$ is metrizable, we can find some open set $O \subseteq X$ where 
\begin{equation*}
K \subseteq O \subseteq \overline{O} \subseteq \{m_1, \ldots, m_s \}^c.
\end{equation*}
The large deviation upper bound, using that $\overline{O}$ is a closed set not containing the minimizers $\{m_1,\dots,m_s\}$ of $I$, yields
\begin{equation*}
\limsup_n \frac{1}{r(n)} \log \mu_n(\overline{O}) \leq - \inf_{x \in \overline{O}} I(x) < 0
\end{equation*}
and, hence,
\begin{equation}\label{eqn:LLN_upper_bound}
\lim_n \mu_n(\overline{O}) = 0.
\end{equation}
On the other hand, using the lower bound in the Portmanteau theorem, we find
\begin{equation*} 
\liminf_{k \rightarrow \infty}{\mu_{n_k}(\overline{O})} \geq \liminf_{k \rightarrow \infty}{\mu_{n_k}(O)} \geq \mu(O) \geq \mu(K)>0, 
\end{equation*}
which contradicts \eqref{eqn:LLN_upper_bound}. 
\end{proof}


\bibliographystyle{alpha} 
\bibliography{KraaijBib}{}

\begin{thebibliography}{dHRvZ15}

\bibitem[Bog07]{Bo07}
Vladimir~I. Bogachev.
\newblock {\em Measure Theory}.
\newblock Springer-Verlag, 2007.

\bibitem[dHRvZ15]{HRZ15}
Frank den Hollander, Frank Redig, and Willem van Zuijlen.
\newblock {Gibbs}-non-{Gibbs} dynamical transitions for mean-field interacting
  {Brownian} motions.
\newblock {\em Stochastic Processes and their Applications}, 125(1):371 -- 400,
  2015.

\bibitem[EFS93]{EFS93}
Aernout~C.D. Enter, Roberto Fern\'{a}ndez, and Alan~D. Sokal.
\newblock Regularity properties and pathologies of position-space
  renormalization-group transformations: Scope and limitations of {Gibbsian}
  theory.
\newblock {\em Journal of Statistical Physics}, 72(5-6):879--1167, 1993.

\bibitem[EK10]{EK10}
Victor Ermolaev and Christof K\"{u}lske.
\newblock Low-temperature dynamics of the {C}urie-{W}eiss model: Periodic
  orbits, multiple histories, and loss of {G}ibbsianness.
\newblock {\em Journal of Statistical Physics}, 141(5):727--756, 2010.

\bibitem[FdHM13]{FHM13}
R.~Fern\'{a}ndez, F.~den Hollander, and J.~Mart\'{i}nez.
\newblock Variational description of {G}ibbs-non-{G}ibbs dynamical transitions
  for the {C}urie-{W}eiss model.
\newblock {\em Communications in Mathematical Physics}, 319(3):703--730, 2013.

\bibitem[FdHM14]{FHM14}
R.~Fern\'{a}ndez, F.~den Hollander, and J.~Mart\'{i}nez.
\newblock Variational description of {G}ibbs-non-{G}ibbs dynamical transitions
  for spin-flip systems with a {K}ac-type interaction.
\newblock {\em Journal of Statistical Physics}, 156(2):203--220, 2014.

\bibitem[Geo11]{Ge11}
Hans-Otto Georgii.
\newblock {\em Gibbs measures and phase transitions}, volume~9 of {\em de
  Gruyter Studies in Mathematics}.
\newblock Walter de Gruyter \& Co., Berlin, second edition, 2011.

\bibitem[HK04]{HK04}
O.~H{\"a}ggstr{\"o}m and C.~K{\"u}lske.
\newblock Gibbs properties of the fuzzy {P}otts model on trees and in mean
  field.
\newblock {\em Markov Process. Related Fields}, 10(3):477--506, 2004.

\bibitem[JK16]{JaKu16}
Benedikt Jahnel and Christof K\"{u}lske.
\newblock The {Widom-Rowlinson} model under spin flip: Immediate loss and sharp
  recovery of quasilocality.
\newblock {\em To appear in Annals of Applied Probability}, 2016.

\bibitem[JK17a]{JaKu17a}
Benedikt Jahnel and Christof K\"{u}lske.
\newblock Gibbsian representation for point processes via hyperedge potentials.
\newblock {\em arXiv preprint arXiv:1707.05991}, 2017.

\bibitem[JK17b]{JaKu17}
Benedikt Jahnel and Christof K{\"u}lske.
\newblock Sharp thresholds for {Gibbs}-non-{Gibbs} transitions in the fuzzy
  {Potts} model with a {Kac}-type interaction.
\newblock {\em Bernoulli}, 23(4A):2808--2827, 11 2017.

\bibitem[Kle14]{Kl14}
Achim Klenke.
\newblock {\em Probability theory}.
\newblock Universitext. Springer, London, second edition, 2014.
\newblock A comprehensive course.

\bibitem[KLN07]{KN07}
Christof K\"{u}lske and Arnaud Le~Ny.
\newblock Spin-flip dynamics of the {C}urie-{W}eiss model: Loss of
  {G}ibbsianness with possibly broken symmetry.
\newblock {\em Communications in Mathematical Physics}, 271(2):431--454, 2007.

\bibitem[Rue99]{Ru99}
David Ruelle.
\newblock {\em Statistical mechanics: Rigorous results}.
\newblock World Scientific, 1999.

\end{thebibliography}

\end{document}